\newtheorem{theorem}{Theorem}[section]
\newtheorem{assumption}[theorem]{Assumption}
\newtheorem{corollary}[theorem]{Corollary}
\newtheorem{example}[theorem]{Example}
\newtheorem{lemma}[theorem]{Lemma}
\newtheorem{remark}[theorem]{Remark}
\newenvironment{proof}[1][Proof]{\textbf{#1.} }
{\ \rule{0.75em}{0.75em}\smallskip}
\numberwithin{equation}{section}
\newcommand{\vect}[1]{\boldsymbol{#1}} 
\begin{document}

\title{Uniform Stability and Error Analysis\\
for Some Discontinuous Galerkin Methods 
}
\author{Qingguo Hong and Jinchao Xu}
\date{}
%\date{August 19, 2016}
\maketitle 
%\tableofcontents

\begin{abstract}

In this paper,  we provide a number of new estimates on the stability and convergence of both hybrid discontinuous Galerkin (HDG) and weak Galerkin (WG) methods.  
By using the standard Brezzi theory on mixed methods, we carefully define appropriate norms for the various discretization variables and then establish that 
the stability and error estimates 
hold uniformly with respect to stabilization and discretization parameters.  
 As a result, by taking appropriate limit of the stabilization parameters, we show that the HDG method converges to 
a primal conforming method and the WG method converge to a mixed conforming method.
\end{abstract}

{\bf Keywords.} Uniform Stability, Uniform Error Estimate, Hybrid Discontinuous Galerkin, Weak Galerkin

%\tableofcontents

%% Introduction 
\section{Introduction} \label{sec:intro} 
 In the last few decades,
%  a great variety of finite
%element methods have been developed such as discontinuous Galerkin methods, HDG methods and WG methods.  
%Before we talk about HDG and WG methods, we start a little bit from discontinuous Galerkin (DG) methods.
one variant of finite element method called the discontinuous Galerkin (DG) method
\cite{karniadakis2000discontinuous, arnold2002unified} has been developed to solve various 
differential equations due to their flexibility in
constructing feasible local shape-function spaces and the advantage of effectively capturing 
non-smooth or oscillatory solutions. Since DG methods use discontinuous space as trial space, 
the number of degrees of freedom is usually much higher than the standard conforming method. 
To reduce the number of globally coupled degrees of freedom of DG methods,
a hybrid DG (HDG) has been developed. The idea of hybrid methods can be tracked 
to the 1960s \cite{fraeijs1965displacement}.
A new hybridization approach in \cite{cockburn2004characterization} was put forward by Cockburn and Gopalakrishnan in 2004 and was successfully applied to a discontinuous Galerkin method in \cite{carrero2006hybridized}. 
%Thus, we show that their approximate solution (qh, uh) can be expressed as in (1.6) and that the approximate trace λh, which is nothing but the so-called numerical trace u􏰑h on the inter-element boundaries, see \cite{arnold2002unified}[5], satisfies the weak formulation (1.7). 
Using the local discontinuous Galerkin (LDG) method to define the local solvers, a super-convergent LDG-hybridizable Galerkin method for second-order elliptic problems was designed in \cite{ cockburn2008superconvergent}. In 2009, a unified analysis for the hybridization of discontinuous Galerkin, mixed, and continuous Galerkin methods for second-order elliptic problems was presented in \cite{cockburn2009unified} by Cockburn, Gopalakrishnan, and Lazarov. A projection-based error analysis of HDG methods was presented in \cite{cockburn2010projection}, 
where a projection was constructed to obtain the $L^2$ error estimate for the potential and flux. 
However, the error estimate was dependent on the stabilization parameter. 
A projection-based analysis of the hybridized discontinuous Galerkin methods for convection-diffusion equations for semi-matching nonconforming meshes was presented in \cite{chen2014analysis}. An analysis for a hybridized discontinuous Galerkin 
method with reduced stabilization for second-order elliptic problem was given in \cite{oikawa2015hybridized}.

%A hybridizable discontinuous Galerkin method for linear elasticity and the corresponding analysis were presented in \cite{soon2009hybridizable,fu2015analysis}. Further, a hybridizable discontinuous Galerkin formulation for nonlinear elasticity problems were considered in \cite{kabaria2015hybridizable}.
%More references about HDG for other problems can be found in \cite{chen2012analysis, chen2014analysis, cockburn2015hybridizable, cockburn2016hybridizable}, 
%for superconvergence can be found in \cite{cockburn2012conditions, cockburn2012superconvergent, cockburn2017devising, cockburn2017superconvergence}, 
%and a posteriori error analysis is presented in \cite{cockburn2012posteriori, cockburn2013posteriori, cockburn2016contraction}.

%We follow  \cite{cockburn2009unified} of Cockburn and Gopalakrishnan et al. to briefly review the HDG methods.

%\paragraph{Weak Galerkin methods}
Based on a new concept, namely the weak gradient, introduced in \cite{wang2013weak}, 
Wang and Ye proposed a weak Galerkin (WG) method for 
elliptic equations. Similar to the concept introduced in \cite{wang2013weak}, Wang and 
Ye \cite{wang2014weak} introduced a concept 
called weak divergence. Based on the newly introduced concept, Wang and Ye \cite{wang2014weak} proposed and analyzed 
a WG method for the second-order elliptic equation formulated as a system of two first-order linear equations.
Then a similar idea was applied to Darcy-Stokes flow in \cite{chen2016weak}. A primal-dual WG 
finite element method for second-order elliptic equations in non-divergence form 
was presented in \cite{wang2017primal} and a further similar method was applied to Fokker-Planck
type equations in \cite{wang2017primalFP}. A bridge building the connection between the 
WG method and HDG method was shown in \cite{cockburn2016static}. A summary of the 
idea and applications of WG methods to various problem were provided in \cite{wang2015weak}.

In this paper, in contrast to the projection-based error analysis in \cite{cockburn2010projection,oikawa2015hybridized}, we use the Ladyzhenskaya-Babu\v{s}ka-Brezzi (LBB)  theory to prove two types of uniform stability results under some 
carefully constructed parameter-dependent norms for HDG methods. Based on the uniform stability results, 
we prove uniform and optimal error estimates for HDG methods.  In addition, by using properly 
defined parameter-dependent norms, we further prove two types of uniform stability results for 
WG methods. Similarly based on the uniform stability results, we provide uniform and optimal error estimates for WG methods. These uniform stability results and error estimates for WG methods are meaningful and interesting improvement for 
the results in \cite{wang2013weak,wang2014weak}. Following these uniform stability results for HDG 
methods and WG methods presented in this paper, an HDG method is shown to converge to a 
primal conforming method, whereas a WG method is shown to converge to a mixed conforming method by taking the 
limit of the stabilization parameters.

We illustrate the main idea and results by using the
following elliptic boundary value problem:
\begin{equation} \label{pois}
\left\{
\begin{aligned}
-{\rm div} (\alpha\nabla u)&= f & {\rm in} \ \Omega, \\
 u&=0 & {\rm on} \ \partial\Omega,
\end{aligned}
\right.
\end{equation}
where $\Omega\subset \mathbb R^d$ ($d\ge 1$) is a bounded domain and
$\alpha: \mathbb{R}^d \rightarrow \mathbb{R}^d$ is a bounded and
symmetric positive definite matrix, and its inverse is denoted by $c =
\alpha^{-1}$.  Setting $\vect{p} =- \alpha\nabla u$, the above problem
can be written as:
\begin{equation}\label{H1}
\left\{
\begin{aligned}
c \vect{p} + \nabla u &= 0 & {\rm in}\ \Omega, \\ %\label{H1_1}
-{\rm div}  \vect{p} &= f & {\rm in}\ \Omega, \\%\label{H1_2}
 u&=0 & {\rm on} \ \partial\Omega.
\end{aligned}
\right.
\end{equation}

The rest of the paper is organized as follows. In Section
\ref{sec:preliminaries}, some preliminary materials are provided. In
Section \ref{sec:framework}, we set up the HDG and WG methods and provide 
the main uniform well-posedness results.  
Based on the uniform well-posedness results, we present uniform and optimal error estimates for HDG and WG in Section
\ref{sec:Error:HDG:WG}, and show that an HDG method converges to a 
primal conforming method, whereas a WG method converges to a mixed conforming method by taking the 
limit of the stabilization parameters in Section \ref{relationship}. In Section \ref{sec:HDG:WG}, we provide proof of the uniform
well-posedness of HDG and WG under the specific parameter-dependent norms.  
% In Section \ref{sec:DG}, we discuss the DG, and
%derive the mixed DG methods by making a dual choice of numerical
%traces of primal DG methods.  
We provide a brief summary in the last section.

%% Preliminaries 
%%%%%%%%%%%%%%%%%%%%%%%%%%%%%%%%%%%%%%%%%%%%%%%%%%
%% Unified Framework 
%%%%%%%%%%%%%%%%%%%%%%%%%%%%%%%%%%%%%%%%%%%%%%%%%% 
\section{Preliminaries} \label{sec:preliminaries}
In this section, we describe some basic notation. Throughout this paper, we
use letter $C$ to denote a generic positive constant, which may stand
for different values at different occurrences, but not depending the mesh size and the stability parameters.  
The notations $x
\lesssim y$ and $x \gtrsim y$ mean $x \leq Cy$  and $x \geq Cy$,
respectively. 
  %and $x \simeq y$ means $x \lesssim y \lesssim x$.  

\subsection{Discontinuous Galerkin Notation}
Given a bounded domain $D\subset \mathbb{R}^d$ and a positive integer
$m$, $H^m(D)$ is the Sobolev space with the corresponding usual norm
and semi-norm, which are denoted respectively by $\|\cdot\|_{m,D}$ and
$|\cdot|_{m,D}$. We abbreviate them by $\|\cdot\|_{m}$ and
$|\cdot|_{m}$, respectively, when $D$ is chosen as $\Omega$.  The
$L^2$-inner products on $D$ and $\partial D$ are denoted by $(\cdot,
\cdot)_{D}$ and $\langle\cdot, \cdot\rangle_{\partial D}$,
respectively.  Moreover, $\|\cdot\|_{0,D}$ and $\|\cdot\|_{0,\partial D}$ are the
norms of Lebesgue spaces $L^2(D)$ and $L^2(\partial D)$, respectively, and $\|\cdot\|=\|\cdot\|_{0,\Omega}$.
We also set $H({\rm div}, \Omega)=\big\{\boldsymbol u\in \boldsymbol {L^2(\Omega)}:{\rm div} \boldsymbol u\in L^2(\Omega)\big\}$
equipped with the norm $\|\boldsymbol u\|^2_{\rm div}=(\boldsymbol u,\boldsymbol u)+({\rm div}\boldsymbol u,{\rm div}\boldsymbol u)$.

We assume $\Omega$ is a polygonal domain, and a family of triangulations of
$\overline{\Omega}$ is denoted by
$\{\mathcal{T}_h\}_h$, with the minimal angle condition satisfied. Let
$h_K ={\rm diam}(K)$ and $h = \max\{h_K: K\in \mathcal{T}_h\}$.
We denote ${\cal E}_h^i$ the set of interior edges (or faces) of $\mathcal{T}_h$ and ${\cal
E}_h^\partial$ the set of boundary
edges (or faces), and let ${\cal E}_h={\cal
E}_h^\partial\cup {\cal E}_h^i$. For $e\in {\cal E}_h$, let $h_e ={\rm diam}(e)$. For $e\in {\cal E}_h^i$, we choose a fixed normal unit direction denoted by
$\mathbf n_e$, and for $e\in {\cal E}_h^\partial$, we take the outward unit 
normal as $\mathbf n_e$. 
%We denote $\mathbf n$ as the outward unit normal direction.
Let $e$ be the common edge of two elements $K^+$ and $K^-$, and
$\vect{n}^i$ = $\vect{n}|_{\partial K^i}$ be the unit outward normal
vector on $\partial K^i$ with $i = +,-$.  For any scalar-valued
function $v$ and vector-valued function $\vect{q}$, let $v^{\pm}$ =
$v|_{\partial K^{\pm}}$ and $\vect{q}^{\pm}$ = $\vect{q}|_{\partial
K^{\pm}}$. Then, we define averages $\{\cdot\}, \{\!\!\{\cdot \}\!\!\}$ 
and jumps $\llbracket \cdot \rrbracket$, $[\cdot]$ as follows:
\begin{align*}
&\{v\} = \frac{1}{2}(v^+ + v^-),\qquad \{\vect{q}\} =
\frac{1}{2}(\vect{q}^+ + \vect{q}^-), \qquad\{\!\!\{\vect q\}\!\!\}=\frac{1}{2}(\vect{q}^+\cdot \vect{n}^+ - \vect{q}^-\cdot\vect{n}^-)\qquad &{\rm on}\ e\in {\cal
  E}_h^i,\\
&\llbracket v \rrbracket  = v^+\vect{n}^+ + v^-\vect{n}^- ,\qquad [v] = v^+ - v^-,
\qquad[\vect{q}] = \vect{q}^+\cdot \vect{n}^+ + \vect{q}^-\cdot \vect{n}^-  \qquad &{\rm on}\ e\in {\cal E}_h^i,\\
&\llbracket v \rrbracket  = v \vect{n}, \qquad[v] = v, \qquad\{\vect{q}\}=\vect{q},\qquad \{\!\!\{\vect q\}\!\!\}=\vect q\cdot \vect n \qquad 
&{\rm on}\ e \in {\cal E}_h^\partial.
\end{align*}
Here,  we specify $\vect{n}$ as the outward unit normal direction on $\partial \Omega$.  

We define some inner products as follows:
\begin{equation}
(\cdot,\cdot)_{\mathcal T_h}=\sum_{K\in \mathcal T_h}(\cdot,\cdot)_{K},~~~~~
\langle\cdot,\cdot\rangle_{\mathcal E_h}=\sum_{e\in \mathcal E_h}\langle\cdot,\cdot\rangle_{e}, ~~~~~\langle\cdot,\cdot\rangle_{\mathcal E^i_h}=\sum_{e\in \mathcal E^i_h}\langle\cdot,\cdot\rangle_{e}, ~~~~\langle\cdot,\cdot\rangle_{\partial\mathcal T_h }=\sum_{K\in \mathcal T_h}\langle\cdot,\cdot\rangle_{\partial K}.
\end{equation}
We now give more details about the last notation of the inner product. For any scalar-valued
function $v$ and vector-valued function $\boldsymbol q$, 
$$
\langle v, \boldsymbol  q \cdot 
\boldsymbol n \rangle_{\partial \mathcal T_h}=\sum_{K\in \mathcal T_h}\langle v, \boldsymbol q\cdot \boldsymbol n\rangle_{\partial K}=\sum_{K\in \mathcal T_h}\langle v, \boldsymbol q\cdot \boldsymbol n_K\rangle_{\partial K}.
$$
Here, we specify the outward unit normal direction $\boldsymbol n$
corresponding to the element $K$, namely $\boldsymbol n_K$. 

For the piecewise smooth scalar-valued function $v$ and vector-valued function
$\vect{q}$, let $\nabla_h$ and ${\rm div}_h$ be defined by the relation
$$
(\nabla_hv)|_K=\nabla (v|_K), \quad
({\rm div}_h\vect{q})|_K={\rm div} (\vect{q}|_K),
$$
on any element $K\in{\cal T}_h$, respectively.

With the definition of averages and jumps, we have the
following identity:
%\begin{equation} \label{equ:dg-identity_1}
%\sum_{K\in\mathcal{T}_{h}} \int_{\partial K} (\vect{q} \cdot \vect{n}_K) v ~ds
%= \int_{\mathcal{E}_{h}}\{\vect{q}\}\cdot \llbracket v \rrbracket ~ds 
%+ \int_{\mathcal{E}_{h}^i}[\vect{q}]\cdot\{v\} ~ds.
%%= \int_{\mathcal{E}_{h}}\{\!\!\{\vect{q}\}\!\!\}\cdot[v] ~ds 
%%+ \int_{\mathcal{E}_{h}^i}[\vect{q}]\cdot\{v\} ~ds.
%%= \int_{\mathcal{E}_{h}^i}\{\vect{q}\}\cdot \llbracket v \rrbracket ~ds 
%%+ \int_{\mathcal{E}_{h}}[\vect{q}]\cdot\{v\} ~ds.
%\end{equation}
%Namely
\begin{equation}\label{equ:dg-identity_1}
\langle v, \boldsymbol q\cdot \boldsymbol n\rangle_{\partial T_h}
=\langle \{\boldsymbol q\},\llbracket v\rrbracket\rangle_{\mathcal E_h}+
\langle [\boldsymbol q],\{v\}\rangle_{\mathcal E_h^i},
\end{equation}
and
\begin{equation}\label{equ:dg-identity_2}
\{\vect{q}\}\cdot \llbracket v \rrbracket=\{\!\!\{\vect{q}\}\!\!\}\cdot[v].
\end{equation}

%% end of file %% 

%%%%%%%%%%%%%%%%%%%%%%%%%%%%%%%%%%%%%%%%%%%%%%%%%% 
%% Unified framework 
%%%%%%%%%%%%%%%%%%%%%%%%%%%%%%%%%%%%%%%%%%%%%%%%%%
%% Primal 

%On each edge (face) $e$, we pick a fixed normal direction, denoted by
%$\mathbf n_e$.  This gives a fixed normal direction on each segment in
%$\mathcal E_h$, denoted by $\mathbf n$.  
%We introduce space $\boldsymbol{\hat Q}_h$.

Before discussing various Galerkin methods, we need to introduce the
finite element spaces associated with the triangulation
$\mathcal{T}_h$. First, $V_h$ and $\boldsymbol Q_h$ are the piecewise scalar and 
vector-valued discrete spaces on the triangulation $\mathcal T_h$, respectively and for $k\ge 0$, 
we define the spaces as follows:
\begin{equation}\label{Spaces}
\begin{aligned}
V^{k}_h&=\big\{v_h\in L^2(\Omega): v_h|_{K}\in \mathcal{P}_k(K), \forall
K\in \mathcal T_h \big\},\\
\boldsymbol Q^k_h& = \big\{\boldsymbol p_h\in \boldsymbol {L^2(\Omega)}:
\boldsymbol p_h|_K\in \boldsymbol {\mathcal{P}_k(K)}, \forall K\in
\mathcal T_h \big\},\\
\boldsymbol Q^{k,RT}_h &= \big\{\boldsymbol p_h\in \boldsymbol {L^2(\Omega)}:
\boldsymbol p_h|_K\in \boldsymbol {\mathcal{P}_k(K)}+\boldsymbol x
P_k(K), \forall K\in \mathcal T_h \big\},
\end{aligned}
\end{equation}
where $\mathcal{P}_k(K)$ is the space of polynomial functions of
degree at most $k$ on $K$.  We also use the
following spaces associated with $\mathcal{E}_h$:
\begin{equation}\label{Edge:spaces}
\begin{aligned}
\hat{\boldsymbol{Q}}_h &=\big\{\hat{\boldsymbol p}_h:
\hat{\boldsymbol{p}}_h|_e\in \hat Q(e)\boldsymbol{n}_e, \forall
e\in \mathcal{E}_h\big\},\\
{\hat Q}_h&=\big\{{\hat p}_h: {\hat p}_h|_e\in \hat Q(e), \forall e\in
\mathcal{E}_h\big\}, \\ 
{\hat V}_h &= \big\{{\hat v}_h: {\hat v}_h|_e\in \hat V(e), e\in
\mathcal{E}^i_h, {\hat v}_h|_{\mathcal E_h^{\partial}}=0\big\}, \\
{\hat Q}^k_h&=\big\{{\hat p}_h\in L^2(\mathcal E_h): {\hat
p}_h|_e\in \mathcal{P}_k(e), \forall e\in \mathcal{E}_h\big\}, \\
{\hat V}^k_h&=\big\{{\hat v}_h \in L^2(\mathcal E_h): {\hat
v}_h|_e\in \mathcal{P}_k(e), \forall e\in \mathcal{E}^i_h, {\hat
  v}_h|_{\mathcal E_h^{\partial}}=0\big\},
\end{aligned}
\end{equation}
where $\hat Q(e)$ and $\hat V(e)$ are some local spaces on $e$ and
$\mathcal{P}_k(e)$ is the space of polynomial functions of degree at most $k$ on $e$.
For convenience, we denote $\boldsymbol{\tilde Q}_h = \boldsymbol{Q}_h\times\boldsymbol{\hat
Q}_h$ and $\tilde V_h=V_h\times \hat V_h$.

%And denote $[\check u_h]=\lbrack\!\lbrack{\check u}_h \rbrack\!\rbrack\cdot \boldsymbol n$ and  $[u_h]=\lbrack\!\lbrack{u}_h \rbrack\!\rbrack\cdot \boldsymbol n$. 

%%%%%%%%%%%%%%%%%%%%%%%%%%%%%%%%%%%%%%%%%%%%%%%%%%
%% Section 2 
%%%%%%%%%%%%%%%%%%%%%%%%%%%%%%%%%%%%%%%%%%%%%%%%%%
\section{Uniform Stability for HDG and WG Methods} \label{sec:framework}
In this section, we set up the HDG and WG methods first and then provide the uniform well-posedness for 
both HDG and WG methods under proper parameter-dependent defined norms. 
\subsection{Setting up the HDG and WG Methods}
Now we start with the second-order elliptic equation and
set $\vect {p}=-\alpha\nabla u$ to obtain the following form:
\begin{equation}\label{H11}
\left\{
\begin{aligned}
c \vect{p} + \nabla u &= 0 \qquad  {\rm in}\ \Omega, \\ %\label{H1_1}
{\rm div} \vect{p} &= f  \qquad {\rm in}\ \Omega. \\%\label{H1_2}
\end{aligned}
\right.
\end{equation}
Multiplying the first and second equations by $\boldsymbol q_h\in
\boldsymbol Q_h$ and $v_h\in V_h$, respectively, then integrating on an
element $K\in \mathcal T_h$, we obtain:
\begin{equation}\label{eq:element}
\left\{
\begin{aligned}
\displaystyle (c\boldsymbol p, \boldsymbol q_h)_K-(u, {\rm div}
\boldsymbol q_h)_K+\langle u, \boldsymbol q_h\cdot \boldsymbol
n_K\rangle_{\partial K} &= 0 \qquad \qquad\quad \forall \boldsymbol
q_h\in \boldsymbol Q_h,\\
\displaystyle (\boldsymbol p, \nabla v_h)_K-\langle \boldsymbol p\cdot
\boldsymbol n_K, v_h\rangle_{\partial K}&=-(f, v_h)_K ~\quad \forall
v_h\in V_h.
\end{aligned}
\right.
\end{equation}
Summing on all $K\in \mathcal T_h$, we have:
\begin{equation}
\left\{
\begin{array}{l}
(c\boldsymbol p, \boldsymbol q_h)_{\mathcal T_h} - (u, {\rm div}_h \boldsymbol q_h)_{\mathcal T_h}
+ \langle u, \boldsymbol q_h\cdot \boldsymbol n\rangle_{\partial
  \mathcal{T}_h} = 0 \qquad \forall \boldsymbol q_h\in \boldsymbol Q_h,\\
(\boldsymbol p, \nabla_h v_h)_{\mathcal T_h} - \langle \boldsymbol p\cdot \boldsymbol
n, v_h\rangle_{\partial \mathcal{T}_h} = -(f, v_h)_{\mathcal T_h} \qquad \forall
v_h\in V_h.
\end{array}
\right.
\end{equation}
Now we approximate $u$, $\boldsymbol p$ by $u_h\in V_h$, and
$\boldsymbol p_h\in \boldsymbol Q_h$, respectively, and the trace of
$u$ and the flux $\boldsymbol p\cdot\boldsymbol n$ on $\partial K$
by $\check u_h$, $ \check{\boldsymbol p}_h \cdot\boldsymbol
n$. Hence, we have:
\begin{equation}\label{eq:common:1}
\left\{
\begin{array}{l}
(c\boldsymbol p_h, \boldsymbol q_h)_{\mathcal T_h} - (u_h, {\rm div}_h \boldsymbol
q_h)_{\mathcal T_h} + \langle \check u_h, \boldsymbol
q_h\cdot \boldsymbol n\rangle_{\partial \mathcal{T}_h} = 0, \forall
\boldsymbol q_h\in \boldsymbol Q_h,\\
(\boldsymbol p_h, \nabla_h v_h)_{\mathcal T_h} - \langle \check{\boldsymbol p}_h\cdot
\boldsymbol n, v_h\rangle_{\partial \mathcal{T}_h} = -(f,
v_h)_{\mathcal T_h} \qquad \forall v_h\in V_h.
\end{array}
\right.
\end{equation}
Next, we need to derive appropriate equations for the variables of
$\check u_h$ and $\boldsymbol {\check p}_h$.  The starting point is
the following relationship:
\begin{equation}\label{hdg-wg}
\boldsymbol {\check p}_h \cdot \boldsymbol n_K + \tau \check u_h
=\boldsymbol p_h \cdot \boldsymbol n_K + \tau u_h, \qquad
\boldsymbol {\check p}_h={\check p}_h\boldsymbol n_e.
\end{equation}
The idea is that we only use either ${\check p}_h$ or
$\check u_h$ as an unknown and then use \eqref{hdg-wg} to determine
the other variable.  There are two different approaches; one approach is for
deriving HDG methods, and the other one is for deriving WG methods.

\paragraph{First approach: (Hybridized Discontinuous Galerkin)}
Set $\check u_h=\hat u_h\in \hat
V_h$ as an unknown that is single-valued.  The ``continuity'' of
$\boldsymbol {\check p}_h$ is then enforced weakly as follows:
\begin{equation}\label{eq:common:4}
\langle\boldsymbol{\check p}_h\cdot\boldsymbol{n},
\hat{v}_h\rangle_{\partial\mathcal T_h}=0,~\forall \hat v_h \in \hat
V_h.
\end{equation}
where $\boldsymbol {\check p}_h$ is given by \eqref{hdg-wg}.  From the
identity \eqref{equ:dg-identity_1} and the fact that $[\hat{v}_h]=0$,
a straightforward calculation shows that \eqref{eq:common:4} can be
rewritten as:
\begin{equation} 
\langle [{\check p}_h], \hat{v}_h\rangle_{\mathcal E_h} := 
\sum_{e\in \mathcal{E}_h} \langle [\check p_h], \hat v_h \rangle_e =0
\qquad \forall \hat v_h\in \hat V_h.
\end{equation}
Collecting \eqref{eq:common:1}, \eqref{hdg-wg}, and \eqref{eq:common:4}, the HDG methods read:
Find $(\boldsymbol{p}_h,  \tilde{u}_h)\in \boldsymbol Q_h\times \tilde{V}_h$ such that for any
 $(\boldsymbol{q}_h, \tilde{v}_h)\in \boldsymbol Q_h\times \tilde{V}_h$,
 \begin{equation}\label{Stabilizedpweak}
\left\{
\begin{array}{l}
a_h(\boldsymbol{p}_h,\boldsymbol{q}_h)+b_h(\boldsymbol{ q}_h,\tilde {u}_h)=0,\\
b_h(\boldsymbol{p}_h, \tilde v_h)+c_h( \tilde{u}_h, \tilde{v}_h)=-(f,v_h)_{\mathcal T_h}.
\end{array}
\right.
\end{equation}
Here
 \begin{equation}\label{definition:ahbhch}
\left\{
\begin{array}{l}
a_h(\boldsymbol{p}_h,\boldsymbol{q}_h)=(c \boldsymbol{p}_h, \boldsymbol{q}_h)_{\mathcal T_h},\\
b_h(\boldsymbol{q}_h, \tilde u_h)=-(u_h, {\rm div} \boldsymbol{q}_h)_{\mathcal T_h}+\langle\hat{u}_h, \boldsymbol{q}_h\cdot\boldsymbol{n}_K\rangle_{\partial {\mathcal T_h}},\\
c_h( \tilde{u}_h, \tilde{v}_h)=-\tau\langle u_h-\hat{u}_h, v_h-\hat{v}_h\rangle_{\partial {\mathcal T_h}},
\end{array}
\right.
\end{equation}
where $\tau$ is the stabilization parameter. 
%\begin{lemma}\label{Error:common:HDG}
%Let $f\in L^2(\Omega)$, and $(\boldsymbol p, u)$ be the solution of \eqref{primal} or \eqref{mixed}, then $(\boldsymbol p,  u)$ satisfies the following consistence equation of HDG method
% \begin{equation}\label{Error:common:HDG:consistence}
%\left\{
%\begin{array}{l}
%a_h(\boldsymbol{p},\boldsymbol{q}_h)+b_h(\boldsymbol{ q}_h, u)=0,~~\forall \boldsymbol{q}_h\in \boldsymbol{Q}_h,\\
%b_h(\boldsymbol{p}, \tilde v_h)+c_h( u, \tilde{v}_h)=-(f,v_h)_{\mathcal T_h},~~\forall \tilde v_h\in \tilde V_h.
%\end{array}
%\right.
%\end{equation}
%\end{lemma}
%\begin{proof}
%%The proof is similar to the proof of Lemma \ref{Error:common:WG} 
%Noting that the solutions of \eqref{primal} and \eqref{mixed} are equivalent. Hence the solution $(\boldsymbol p, u)\in \boldsymbol {L^2(\Omega)}\times H^1(\Omega)$ of \eqref{primal} includes that $\boldsymbol p\in H({\rm div}, \Omega)$. And similarly the solution $(\boldsymbol p, u)\in  H({\rm div}, \Omega)\times L^2(\Omega) $ of \eqref{mixed} includes that $u\in H^1(\Omega)$. Then by direct computation we can get \eqref{Error:common:HDG:consistence}.
%\end{proof}

The HDG method can be written in a compact form:  Find $(\boldsymbol{p}_h,  \tilde{u}_h)\in \boldsymbol Q_h\times \tilde{V}_h$ such that for any
 $(\boldsymbol{q}_h, \tilde{v}_h)\in \boldsymbol Q_h\times \tilde{V}_h$,
\begin{equation}
A_h((\boldsymbol{p}_h, \tilde u_h),(\boldsymbol{q}_h, \tilde v_h))=-(f,v_h)_{\mathcal T_h},
\end{equation}
where 
\begin{equation}\label{HDG:Ah}
A_h((\boldsymbol{p}_h, \tilde u_h),(\boldsymbol{q}_h, \tilde v_h))=a_h(\boldsymbol{p}_h,\boldsymbol{q}_h)+b_h(\boldsymbol{ q}_h,\tilde {u}_h)+b_h(\boldsymbol{p}_h, \tilde v_h)+c_h( \tilde{u}_h, \tilde{v}_h).
\end{equation}

In the first case, we choose $\tau=\rho h_K$ in \eqref{definition:ahbhch} and for 
any $\tilde v\in \tilde V_h$ and $\boldsymbol{q}_h\in \boldsymbol{Q}_h$, we define
\begin{equation}\label{HDG:div:norm}
\left\{
\begin{array}{l}
\|\tilde v_h\|_{0, \rho,h}^2=(v_h,v_h)_{\mathcal T_h}+\rho \sum\limits_{e\in \mathcal{E}^i_h}h_e\langle\hat v_h,\hat v_h\rangle_e,\\
\|\boldsymbol{q}_h\|_{{\rm div},\rho,h}^2=(c \boldsymbol{q}_h, \boldsymbol{q}_h)_{\mathcal T_h}+( {\rm div}\boldsymbol{q}_h, {\rm div} \boldsymbol{q}_h)_{\mathcal T_h}+\rho^{-1}\sum\limits_{e\in \mathcal{E}^i_h}h_e^{-1} \langle\hat P_e([\boldsymbol{q}_h]),\hat P_e([\boldsymbol{q}_h])\rangle_e,
\end{array}
\right.
\end{equation}
where $\hat P_e: L^2(e)\rightarrow \hat V(e)$ is the $L^2$ projection.

In the second case, we choose $\tau=\rho^{-1}h_K^{-1}$ in \eqref{definition:ahbhch} and for 
any $\tilde v\in \tilde V_h$ and $\boldsymbol q_h\in \boldsymbol Q_h $, we define 
\begin{equation} \label{HDG:grad:norm}
%\left\{
%\begin{array}{l}
\|\tilde v_h\|^2_{\tilde 1,\rho,h}=(\nabla_h v_h,\nabla_h v_h)_{\mathcal T_h}+\rho^{-1} \sum\limits_{K\in \mathcal{T}_h}h_K^{-1}\langle v_h-\hat{v}_h, v_h-\hat{v}_h\rangle_{\partial K},
~~~~\|\boldsymbol q_h\|^2=(c\boldsymbol q_h,\boldsymbol q_h)_{\mathcal T_h}.
%\end{array}
%\right.
\end{equation}
By noting that
$$
\langle v_h-\hat{v}_h, v_h-\hat{v}_h\rangle_{\partial {\mathcal T_h}}=2\langle\{v_h-\hat{v}_h\}, \{v_h-\hat{v}_h\}\rangle_{\mathcal{E}_h}+
\frac{1}{2}\langle \lbrack\!\lbrack v_h- \hat{v}_h\rbrack\!\rbrack, \lbrack\!\lbrack v_h-\hat{v}_h\rbrack\!\rbrack\rangle_{\mathcal{E}_h},
$$ 
$\|\tilde v_h\|_{\tilde 1,\rho,h}$ is indeed a norm on $\tilde V_h$.

\paragraph{Second approach: (Weak Galerkin)}
We set $\boldsymbol{\check p}_h := \boldsymbol{\hat p}_h = \hat
p_h\boldsymbol n_e\in \boldsymbol {\hat Q}_h $ as an unknown that is
single-valued.  The ``continuity'' of $\check u_h$ is
then enforced weakly as follows:
\begin{equation}\label{eq:common:2}
\langle{\check u}_h,\boldsymbol{\hat q}_h\cdot \boldsymbol
n\rangle_{\partial\mathcal T_h}=0 \qquad \forall \boldsymbol{\hat q}_h
\in \boldsymbol{\hat Q}_h,
\end{equation}
where $\check u_h$ is again given by \eqref{hdg-wg}.  From the
identity \eqref{equ:dg-identity_1} and the fact that
$[\boldsymbol{\hat q}_h]=0$, a straightforward calculation shows that
\eqref{eq:common:2} can be rewritten as:
\begin{equation}\label{jump:mean:product}
\langle[{\check u_h}], {\hat q}_h\rangle_{\mathcal E_h} := \sum_{e\in
\mathcal{E}_h}\langle[{\check u_h}], {\hat q}_h\rangle_{e}=0 \qquad
\forall {\hat q}_h\in {\hat Q}_h.
\end{equation}
Collecting \eqref{eq:common:1}, \eqref{hdg-wg}, and \eqref{eq:common:2}, the WG methods read: Find $(\boldsymbol{\tilde p}_h, u_h)\in \boldsymbol{\tilde Q}_h\times V_h$ such that for any
 $(\boldsymbol{\tilde q}_h, v_h)\in \boldsymbol{\tilde Q}_h\times V_h$,
\begin{equation}\label{Stabilizedpweak_WG}
\left\{
\begin{array}{l}
a_w(\boldsymbol{\tilde p}_h,\boldsymbol{\tilde q}_h)+b_w(u_h,\boldsymbol{\tilde q}_h)=0,\\
b_w(\boldsymbol{\tilde p}_h,v_h)=-(f,v_h).
\end{array}
\right.
\end{equation}
Here
 \begin{equation}\label{WG_aform}
\left\{
\begin{array}{l}
a_w(\boldsymbol{\tilde p}_h,\boldsymbol{\tilde q}_h)=(c \boldsymbol{p}_h, \boldsymbol{q}_h)_{\mathcal T_h}+\eta \langle(\boldsymbol{p}_h-\boldsymbol{\hat p}_h)\cdot \boldsymbol{n}, (\boldsymbol{q}_h-\boldsymbol{\hat q}_h)\cdot \boldsymbol{n}\rangle_{\partial {\mathcal T_h}},\\
b_w(\boldsymbol{\tilde p}_h,v_h)=(\boldsymbol{p}_h, \nabla_h v_h)_{\mathcal T_h}-( \boldsymbol{\hat p}_h\cdot\boldsymbol{n}_K,{v}_h)_{\partial \mathcal T_h},
\end{array}
\right.
\end{equation}
where $\eta$ is the stabilized parameter.

%\begin{lemma}\label{Error:common:WG}
%Let $f\in L^2(\Omega)$, and $(\boldsymbol p, u)$ be the solution of \eqref{primal} or \eqref{mixed}, then $(\boldsymbol p,  u)$ satisfies the following consistence equation of WG method
%\begin{equation}\label{Stabilizedpweak_WG:consistence}
%\left\{
%\begin{array}{l}
%a_w(\boldsymbol{p},\boldsymbol{\tilde q}_h)+b_w(u,\boldsymbol{\tilde q}_h)=0,~~\forall \boldsymbol{\tilde q}_h\in \boldsymbol{\tilde Q}_h, \\
%b_w(\boldsymbol{p},v_h)=-(f,v_h)_{\mathcal T_h},~~\forall v_h\in V_h.
%\end{array}
%\right.
%\end{equation}
%\end{lemma}
%\begin{proof}
%The proof is similar to the proof of Lemma \ref{Error:common:HDG}.
%Noting that the solutions of \eqref{primal} and \eqref{mixed} are equivalent. Hence the solution $(\boldsymbol p, u)\in  \bm L^2(\Omega)\times H^1(\Omega)$ of \eqref{primal} includes that $\boldsymbol p\in H({\rm div}, \Omega)$. And similarly the solution $(\boldsymbol p, u)\in  H({\rm div}, \Omega)\times L^2(\Omega) $ of \eqref{mixed} includes that $u\in H^1(\Omega)$. Then by direct computation we can get \eqref{Stabilizedpweak_WG:consistence}.
%\end{proof}

The WG method can be rewritten in a compact form: Find $(\boldsymbol{\tilde p}_h, u_h)\in \boldsymbol{\tilde Q}_h\times V_h$ such that for any
 $(\boldsymbol{\tilde q}_h, v_h)\in \boldsymbol{\tilde Q}_h\times V_h$:
\begin{equation}
A_w((\boldsymbol{ \tilde p}_h, u_h),(\boldsymbol{ \tilde q}_h, v_h))=-(f,v_h),
\end{equation}
where 
\begin{equation}\label{WG:Aw}
A_w((\boldsymbol{ \tilde p}_h, u_h),(\boldsymbol{ \tilde q}_h, v_h))=a_w(\boldsymbol{\tilde p}_h,\boldsymbol{\tilde q}_h)+b_w(\boldsymbol{\tilde q}_h,u_h)+b_w(\boldsymbol{\tilde p}_h,v_h).
\end{equation}

In the first case, we choose parameter $\eta$ as $\eta =\rho h_K$ in \eqref{WG_aform} and 
for any $v_h\in V_h$ and $\boldsymbol{\tilde q}_h\in \boldsymbol{\tilde Q}_h$, 
we define the norms as follows:
\begin{equation}\label{WG:grad:norm}
\left\{
\begin{array}{l}
\|v_h\|^2_{1,h,\rho}=\|\nabla_h v_h\|^2+\rho^{-1}\sum\limits_{e\in \mathcal{E}_h}h_e^{-1}\|\hat Q_e([v_h])\|^2_{0,e},\\
\|\boldsymbol{\tilde q}_h\|^2_{0,h,\rho}=(c \boldsymbol{q}_h, \boldsymbol{q}_h)_{\mathcal T_h}+\rho\sum\limits_{K\in \mathcal{T}_h}h_K\langle(\boldsymbol{q}_h-\boldsymbol{\hat q}_h)\cdot \boldsymbol{n}_K, (\boldsymbol{q}_h-\boldsymbol{\hat q}_h)\cdot \boldsymbol{n}_K\rangle_{\partial K}.
\end{array}
\right.
\end{equation}
where $\hat Q_e$ is the $L^2$ projection from $L^2(e)$ to $\hat Q(e)$.

In the second case, we choose parameter $\eta$ as $\eta =\rho^{-1} h^{-1}_K$ in \eqref{WG_aform} 
and for any $v_h\in V_h$ and $\boldsymbol{\tilde q}_h\in \boldsymbol{\tilde Q}_h$, we define the norms 
as follows:
\begin{equation}\label{WG:div:norm}
\|u_h\|^2=(u_h,u_h)_{\mathcal T_h}; ~~ \|\boldsymbol{\tilde q}_h\|^2_{\widetilde {\rm div},\rho,h}=
\sum_{K\in \mathcal{T}_h}\|\boldsymbol{\tilde q}_h\|^2_{\widetilde{\rm div},\rho,K},
\end{equation}
where 
$\|\boldsymbol{\tilde q}_h\|^2_{{\widetilde {\rm div}},\rho,K}
=(c \boldsymbol{q}_h,
\boldsymbol{q}_h)_K+
({\rm div}\boldsymbol{q}_h, {\rm div}\boldsymbol{q}_h)_K+\rho^{-1}
h^{-1}_K\langle(\boldsymbol{q}_h-\boldsymbol{\hat q}_h)\cdot \boldsymbol{n}_K, (\boldsymbol{q}_h-\boldsymbol{\hat q}_h)\cdot \boldsymbol{n}_K\rangle_{\partial K}$.

%%%%%%%%%%%%%%%%%%%%%%%%%%%%%%%%%%%%%%%%%%%%%%%%%%
%% Section 5: HDG 
%%%%%%%%%%%%%%%%%%%%%%%%%%%%%%%%%%%%%%%%%%%%%%%%%%
\subsection{Uniform Well-posedness of HDG and WG} 
For the elliptic problem \eqref{H1}, we set a discretization: 
Find $\boldsymbol{\mathcal U_h}\in \boldsymbol{U_h}$, such that:
\begin{equation}\label{general:formulation}
A_{h,\theta}(\boldsymbol{\mathcal U_h},\boldsymbol{\mathcal V_h})=F(\boldsymbol{\mathcal V_h})~~~\forall~\boldsymbol{\mathcal V_h}\in \boldsymbol{U_h},
\end{equation} 
where $\boldsymbol{U_h}$ is a finite dimensional space according to partition $\mathcal T_h$ and $A_{h,\theta}(\boldsymbol{\mathcal U_h},\boldsymbol{\mathcal V_h})$ is a general symmetric $\theta$-parameter-dependent bilinear form and $F(\boldsymbol{\mathcal V_h})=-(f, v_h)_{\mathcal T_h}$.

%\begin{definition}\label{general: consistence:definiton}
Let $\boldsymbol{\mathcal U}=(\boldsymbol {p}, u)$ be the true solution of  \eqref{H1}. 
\begin{enumerate}
\item We say that the discretization \eqref{general:formulation} is consistent if 
\begin{equation}\label{general: consistence}
A_{h,\theta}(\boldsymbol{\mathcal U},\boldsymbol{\mathcal V_h})=F(\boldsymbol{\mathcal V_h})~~~\forall~\boldsymbol{\mathcal V_h}\in \boldsymbol{U_h}.
\end{equation}
%\end{definition}
%\begin{definition}
\item We say that the bilinear form $A_{h,\theta}(\boldsymbol{\mathcal U_h},\boldsymbol{\mathcal V_h})$ is uniformly continuous with respect to the norm $\|\cdot\|_{\boldsymbol U_{h,\theta}}$ if 
\begin{equation}\label{uniform:continuous:Au}
|A_{h,\theta}(\boldsymbol{\mathcal U_h},\boldsymbol{\mathcal V_h})|\leq M_0 \|\boldsymbol{\mathcal U_h}\|_{\boldsymbol U_{h,\theta}}\|\boldsymbol{\mathcal V_h}\|_{\boldsymbol U_{h,\theta}},
\end{equation}
where $M_0$ is independent of the parameter $\theta$ and the mesh size $h$.
%\end{definition}
%\begin{definition}\label{uniform:inf-sup:Au} 
\item We say that the bilinear form $A_{h,\theta}(\boldsymbol{\mathcal U_h},\boldsymbol{\mathcal V_h})$ 
satisfies the inf-sup condition uniformly with respect to the norm $\|\cdot\|_{\boldsymbol U_{h,\theta}}$ 
if there exists a constant $\beta_1>0$ that does not depend on the parameter $\theta$ and the mesh size 
$h$ such that:
\begin{equation} \label{uniform:inf-sup:Au} 
\inf_{\boldsymbol{\mathcal V_h}\in \boldsymbol U_h} \sup_{\boldsymbol{\mathcal U_h}\in \boldsymbol U_h}   \frac{A_{h,\theta}(\boldsymbol{\mathcal U_h},\boldsymbol{\mathcal V_h})}{\|\boldsymbol{\mathcal U_h}\|_{\boldsymbol U_{h,\theta}}\|\boldsymbol{\mathcal V_h}\|_{\boldsymbol U_{h,\theta}}}   \geq \beta_1.
\end{equation}
%\end{definition}
\end{enumerate}
By the C\'ea's lemma, see \cite{Ciarlet1978}, we have 
\begin{theorem} \label{uniform:error:Au:th}
If a discretization \eqref{general:formulation} satisfies 
\begin{enumerate}
\item consistency, namely \eqref{general: consistence};
\item continuity uniformly, namely \eqref{uniform:continuous:Au};
\item  inf-sup condition uniformly with respect to the norm $\|\cdot\|_{\boldsymbol U_{h,\theta}}$, namely \eqref{uniform:inf-sup:Au},
\end{enumerate}
then we have 
\begin{equation} \label{uniform:error:Au}
\|\boldsymbol{\mathcal U}-\boldsymbol{\mathcal U_h}\|_{\boldsymbol U_{h,\theta}}\leq C_1\inf_{\boldsymbol{\mathcal V_h}\in \boldsymbol U_h}\|\boldsymbol{\mathcal U}-\boldsymbol{\mathcal V_h}\|_{\boldsymbol U_{h,\theta}},
\end{equation}
where $C_1$ is independent of the parameter $\theta$ and the mesh size $h$. Further, we say the discretization \eqref{general:formulation} is uniformly stable. 
\end{theorem}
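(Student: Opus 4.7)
The plan is to prove the bound by the classical Céa-type argument, using Galerkin orthogonality (from consistency), the uniform inf-sup condition to control the discrete error, and uniform continuity to pass to the best-approximation error. The key point to verify is that every constant produced is built only out of $M_0$ and $\beta_1$, so the final constant $C_1$ inherits the independence from $\theta$ and $h$.

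First I would fix an arbitrary $\boldsymbol{\mathcal V_h}\in\boldsymbol U_h$ and split
\begin{equation*}
\boldsymbol{\mathcal U}-\boldsymbol{\mathcal U_h}=(\boldsymbol{\mathcal U}-\boldsymbol{\mathcal V_h})+(\boldsymbol{\mathcal V_h}-\boldsymbol{\mathcal U_h}),
\end{equation*}
so by the triangle inequality it suffices to control the discrete difference $\boldsymbol{\mathcal V_h}-\boldsymbol{\mathcal U_h}\in\boldsymbol U_h$ in the $\|\cdot\|_{\boldsymbol U_{h,\theta}}$ norm by the best-approximation error. For this I would subtract \eqref{general:formulation} from the consistency identity \eqref{general: consistence} to obtain Galerkin orthogonality, namely $A_{h,\theta}(\boldsymbol{\mathcal U}-\boldsymbol{\mathcal U_h},\boldsymbol{\mathcal W_h})=0$ for all $\boldsymbol{\mathcal W_h}\in\boldsymbol U_h$. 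Consequently, for any test function $\boldsymbol{\mathcal W_h}\in\boldsymbol U_h$,
\begin{equation*}
A_{h,\theta}(\boldsymbol{\mathcal V_h}-\boldsymbol{\mathcal U_h},\boldsymbol{\mathcal W_h})=A_{h,\theta}(\boldsymbol{\mathcal V_h}-\boldsymbol{\mathcal U},\boldsymbol{\mathcal W_h}).
\end{equation*}

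Next I would invoke the uniform inf-sup condition \eqref{uniform:inf-sup:Au} applied with $\boldsymbol{\mathcal V_h}-\boldsymbol{\mathcal U_h}$ playing the role of the fixed argument: there exists $\boldsymbol{\mathcal W_h}\in\boldsymbol U_h$ with $\boldsymbol{\mathcal W_h}\neq 0$ such that
\begin{equation*}
\beta_1\,\|\boldsymbol{\mathcal V_h}-\boldsymbol{\mathcal U_h}\|_{\boldsymbol U_{h,\theta}}\,\|\boldsymbol{\mathcal W_h}\|_{\boldsymbol U_{h,\theta}}\leq A_{h,\theta}(\boldsymbol{\mathcal V_h}-\boldsymbol{\mathcal U_h},\boldsymbol{\mathcal W_h}).
\end{equation*}
Substituting the Galerkin orthogonality identity and applying the uniform continuity \eqref{uniform:continuous:Au} to the right-hand side yields
\begin{equation*}
\beta_1\,\|\boldsymbol{\mathcal V_h}-\boldsymbol{\mathcal U_h}\|_{\boldsymbol U_{h,\theta}}\,\|\boldsymbol{\mathcal W_h}\|_{\boldsymbol U_{h,\theta}}\leq M_0\,\|\boldsymbol{\mathcal V_h}-\boldsymbol{\mathcal U}\|_{\boldsymbol U_{h,\theta}}\,\|\boldsymbol{\mathcal W_h}\|_{\boldsymbol U_{h,\theta}},
\end{equation*}
so dividing by $\beta_1\|\boldsymbol{\mathcal W_h}\|_{\boldsymbol U_{h,\theta}}$ gives $\|\boldsymbol{\mathcal V_h}-\boldsymbol{\mathcal U_h}\|_{\boldsymbol U_{h,\theta}}\leq (M_0/\beta_1)\|\boldsymbol{\mathcal U}-\boldsymbol{\mathcal V_h}\|_{\boldsymbol U_{h,\theta}}$.

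Finally I would combine this with the triangle inequality from the first step to obtain $\|\boldsymbol{\mathcal U}-\boldsymbol{\mathcal U_h}\|_{\boldsymbol U_{h,\theta}}\leq(1+M_0/\beta_1)\|\boldsymbol{\mathcal U}-\boldsymbol{\mathcal V_h}\|_{\boldsymbol U_{h,\theta}}$, and take the infimum over $\boldsymbol{\mathcal V_h}\in\boldsymbol U_h$ to reach \eqref{uniform:error:Au} with $C_1=1+M_0/\beta_1$. Since both $M_0$ from \eqref{uniform:continuous:Au} and $\beta_1$ from \eqref{uniform:inf-sup:Au} are assumed independent of $\theta$ and $h$, so is $C_1$. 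The only genuine subtlety, and the main point to articulate carefully, is that the consistency assumption implicitly requires $\boldsymbol{\mathcal U}$ to be meaningful as an argument of $A_{h,\theta}$ and to have finite $\|\cdot\|_{\boldsymbol U_{h,\theta}}$ norm on the continuous side; once this is ensured for the true solution of \eqref{H1}, the rest of the argument is purely algebraic and no mesh- or parameter-dependent constants are introduced.
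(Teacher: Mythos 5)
Your argument is correct and is precisely the generalized C\'ea lemma that the paper invokes by citation (Galerkin orthogonality from consistency, the uniform inf-sup bound on the discrete difference, uniform continuity, and the triangle inequality, giving $C_1=1+M_0/\beta_1$ independent of $\theta$ and $h$). The only cosmetic remark is that the paper's inf-sup condition \eqref{uniform:inf-sup:Au} takes the infimum over the second argument, so your application of it with $\boldsymbol{\mathcal V_h}-\boldsymbol{\mathcal U_h}$ in the first slot is justified by the stated symmetry of $A_{h,\theta}$.
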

%\begin{definition}
%We say that a consistent and symmetric bilinear form $A(\boldsymbol X, \boldsymbol Y)$ (always including some parameters) is uniformly stable with respect to the norms $\|\cdot\|_{\boldsymbol U}, \|\cdot\|_{\boldsymbol V}$ if $A(\boldsymbol X, \boldsymbol Y)$ satisfies both definitions \eqref{uniform:continuous:Au} and \eqref{uniform:inf-sup:Au}. 
%\end{definition}

Now for the HDG method, the parameter $\theta=\tau$ in \eqref{general:formulation} and the bilinear form is given by \eqref{HDG:Ah}, the space 
$\boldsymbol U_h=\boldsymbol Q_h\times {\tilde V_h},~ \boldsymbol{\mathcal U_h}=(\boldsymbol p_h, \tilde u_h)$.
In the first case, the parameter $\tau=\rho h_K$, and the norm 
$\|\boldsymbol{\mathcal U_{h}}\|^2_{\boldsymbol U_{h,\theta}}=\|\boldsymbol{p}_h\|_{{\rm div},\rho,h}^2+\|\tilde u_h\|_{0, 
\rho,h}^2$. In the second case, the parameter $\tau=\rho^{-1} h^{-1}_K$ and the norm $\|\boldsymbol{\mathcal U_{h}}\|^2_{\boldsymbol U_{h,\theta}}=
\|\boldsymbol p_h\|^2+\|\tilde u_h\|^2_{\tilde 1,\rho,h}$.
\begin{theorem}\label{wellposed:Ah:divgrad}
 We have two uniform stability results for the HDG method as follows:
\begin{enumerate}
\item \label{wellposed:Ah:div} For any $0<\rho\leq 1$,  and for $k\ge 0$, 
if $\boldsymbol Q_h=\boldsymbol Q_h^{k+1},
V_h=V_h^k$ and $\hat V_h=\hat V_h^r$ where $0\le r\le k+1$, or $\boldsymbol
Q_h=\boldsymbol Q_h^{k,RT}, V_h=V_h^k$ and $\hat V_h=\hat V_h^r$ where
$0\le r\le k$, then the bilinear form $A_h((\cdot, \cdot),(\cdot, \cdot))$ with $\tau=\rho h_K$ is uniformly stable with respect to the norms defined by \eqref{HDG:div:norm};
\item \label{wellposed:Ah:grad}  Assume that $\nabla_h V_h \subset \boldsymbol Q_h$, then there exists a positive constant $\rho_0$ such that for any $0<\rho\leq \rho_0$ the bilinear form $A_h((\cdot, \cdot),(\cdot, \cdot))$ with $\tau=\rho^{-1} h_K^{-1}$ is uniformly stable with respect to the norms defined by \eqref{HDG:grad:norm}. 
\end{enumerate} 
\end{theorem}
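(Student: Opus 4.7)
My plan is to verify the three hypotheses of Theorem \ref{uniform:error:Au:th} in each of the two cases. Consistency is immediate from the derivation: the exact solution $(\boldsymbol p, u)$ of \eqref{H11} has a single-valued trace on each face and a continuous normal flux, so setting $\hat u_h = u|_e$ collapses \eqref{Stabilizedpweak}--\eqref{definition:ahbhch} back to the element-wise integration-by-parts identity \eqref{eq:element} with $c_h$ vanishing. Uniform continuity reduces to Cauchy--Schwarz once the boundary integrals in $b_h$ are rewritten via \eqref{equ:dg-identity_1} and $[\hat u_h]=0$ as $\langle\hat u_h,\hat P_e([\boldsymbol q_h])\rangle_{\mathcal E_h^i}$, which pairs the $\rho$-weighted factors in \eqref{HDG:div:norm} (respectively \eqref{HDG:grad:norm}) naturally. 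All real work is in the uniform inf-sup, which I would establish by constructing, for a given $(\boldsymbol p_h,\tilde u_h)$, a composite test function $(\boldsymbol q_h,\tilde v_h)$ that reproduces every summand of the target norm.

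For Case 1 ($\tau=\rho h_K$, div-type norms), testing with $(\boldsymbol p_h,-\tilde u_h)$ gives $(c\boldsymbol p_h,\boldsymbol p_h)_{\mathcal T_h}+\rho h_K\|u_h-\hat u_h\|^2_{\partial\mathcal T_h}$ since the two $b_h$-contributions cancel and $-c_h$ is positive. Next I would add four further perturbations to recover the missing seminorms: (ii) $(\boldsymbol 0,(-{\rm div}\boldsymbol p_h,0))$, which produces $\|{\rm div}\boldsymbol p_h\|^2$ via $-b_h$; (iii) $(\boldsymbol 0,(0,\rho^{-1}h_e^{-1}\hat P_e([\boldsymbol p_h])))$, which, after using $\langle\hat v_h,\boldsymbol p_h\cdot\boldsymbol n_K\rangle_{\partial\mathcal T_h}=\langle\hat v_h,\hat P_e([\boldsymbol p_h])\rangle_{\mathcal E_h^i}$, produces $\rho^{-1}\sum_e h_e^{-1}\|\hat P_e([\boldsymbol p_h])\|_{0,e}^2$; (iv) $(-\boldsymbol q_h,0)$ with $\boldsymbol q_h\in\boldsymbol Q_h$ a Raviart--Thomas (or BDM) lift of $u_h$ satisfying ${\rm div}\boldsymbol q_h=u_h$ and $\|\boldsymbol q_h\|_{\rm div}\lesssim\|u_h\|$, producing $\|u_h\|^2$; and (v) a local construction exploiting the face degrees of freedom of $\boldsymbol Q_h$ to produce a $\boldsymbol q_h$ whose normal trace on each interior face is proportional to $\hat u_h$ with correctly chosen $\rho$-scaling, producing $\rho\sum_e h_e\|\hat u_h\|_{0,e}^2$. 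Each perturbation is weighted with a small factor $\delta_i$ so its cross terms with the dominant positive piece from step (i) are absorbed via Young's inequality, using $0<\rho\leq 1$ to keep all constants uniform.

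For Case 2 ($\tau=\rho^{-1}h_K^{-1}$, grad-type norms), the hypothesis $\nabla_h V_h\subset\boldsymbol Q_h$ is the pivot: the choice $\boldsymbol q_h=-\nabla_h u_h$ is admissible. Testing with the perturbed pair $(\boldsymbol p_h+\delta\nabla_h u_h,-\tilde u_h)$ and invoking the identity $b_h(\boldsymbol q_h,\tilde u_h)=(\nabla_h u_h,\boldsymbol q_h)_{\mathcal T_h}-\langle u_h-\hat u_h,\boldsymbol q_h\cdot\boldsymbol n_K\rangle_{\partial\mathcal T_h}$ yields $(c\boldsymbol p_h,\boldsymbol p_h)+\rho^{-1}h_K^{-1}\|u_h-\hat u_h\|^2_{\partial\mathcal T_h}+\delta\|\nabla_h u_h\|^2$ plus two cross terms. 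The cross term $\delta(c\boldsymbol p_h,\nabla_h u_h)$ is absorbed by Young's inequality into $(c\boldsymbol p_h,\boldsymbol p_h)$ and $\delta\|\nabla_h u_h\|^2$ for small $\delta$; the boundary cross term $\delta\langle u_h-\hat u_h,\nabla_h u_h\cdot\boldsymbol n_K\rangle$ is bounded, using the trace--inverse inequality $h_K\|\nabla u_h\cdot\boldsymbol n_K\|^2_{\partial K}\lesssim\|\nabla u_h\|^2_K$, by $\delta\rho^{1/2}\bigl(\rho^{-1}h_K^{-1}\|u_h-\hat u_h\|^2_{\partial\mathcal T_h}\bigr)^{1/2}\|\nabla_h u_h\|$, which is absorbed provided $\rho\leq\rho_0$ for a sufficiently small $\rho_0$. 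This is exactly where the smallness threshold on $\rho$ appears. I expect the main obstacle to be, in Case 1, keeping the cross terms generated by steps (iv) and (v) absorbed uniformly in $\rho$; the crucial identity $\|u_h-\hat u_h\|^2_{\partial\mathcal T_h}=2\|\{u_h\}-\hat u_h\|^2_{\mathcal E_h}+\tfrac12\|[u_h]\|^2_{\mathcal E_h^i}$ should allow disentangling $\hat u_h$-contributions from $u_h$-contributions and should make the combined absorption work with constants independent of $\rho\in(0,1]$ and $h$.
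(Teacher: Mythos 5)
Your proposal is correct, and for part \ref{wellposed:Ah:grad} it is essentially the paper's own argument: the paper also tests with $\boldsymbol q_h=\boldsymbol p_h+\nabla_h u_h$, $\tilde v_h=-\tilde u_h$, absorbs $(c\boldsymbol p_h,\nabla_h u_h)_{\mathcal T_h}$ by Young's inequality and the boundary cross term by the trace--inverse inequality, and obtains the threshold $\rho_0$ in exactly the way you describe. For part \ref{wellposed:Ah:div} your ingredients coincide with the paper's but the packaging differs: the paper verifies the Brezzi conditions separately --- boundedness of $a_h,b_h,c_h$ (Theorem \ref{bounded:ahbhch}), coercivity of $a_h$ on ${\rm Ker}(B)=\{\boldsymbol q_h:{\rm div}_h\boldsymbol q_h=0,\ \hat P_e([\boldsymbol q_h])=0\}$ (Theorem \ref{coercivity:ah}, which is where ${\rm div}_h\boldsymbol Q_h\subset V_h$ enters), and the inf-sup for $b_h$ alone (Theorem \ref{uniform:inf-sup:bh}) --- and then appeals to the abstract saddle-point theory with the negative semidefinite penalty $c_h$; whereas you build one composite test function for the full form $A_h$ and absorb all cross terms directly. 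Your steps (iv) and (v) are precisely the paper's construction $\boldsymbol q_h=\boldsymbol r_h+\boldsymbol z_h$ from Lemmas \ref{RT:jump}--\ref{BDM:jump}: $\boldsymbol z_h$ is the local RT/BDM lift with $[\boldsymbol z_h]|_e=\rho h_e\hat u_h$ (this is where the restriction $r\le k+1$, resp.\ $r\le k$, on $\hat V_h^r$ is needed so that $\hat u_h$ lies in the normal-trace space --- worth stating explicitly), and $\boldsymbol r_h$ is the $H({\rm div})$-conforming correction with ${\rm div}\,\boldsymbol r_h=-{\rm div}_h\boldsymbol z_h-u_h$; the paper's choice makes $b_h(\boldsymbol q_h,\tilde u_h)=\|\tilde u_h\|_{0,\rho,h}^2$ exactly and so avoids the extra cross terms your additive perturbations (ii)--(v) generate, at the cost of invoking the abstract theory. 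Your direct route is more self-contained but requires the careful $\delta_i$-bookkeeping you anticipate; both yield constants uniform in $\rho\in(0,1]$ and $h$.
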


From part \ref{wellposed:Ah:grad} of the above theorem, we have the following corollary:
\begin{corollary}\label{uniform_stable}
Assume $\nabla_h V_h \subset \boldsymbol Q_h$, then there exists a unique solution $(\boldsymbol{p}_h,  \tilde{u}_h)\in \boldsymbol Q_h\times \tilde{V}_h$ that satisfies \eqref{Stabilizedpweak} with $\tau=\rho^{-1}h_K^{-1}$, and there exists a positive 
constant $\rho_0$ such that for any $0<\rho\leq \rho_0$ the following estimate holds:
\begin{equation}
 \|\boldsymbol{p}_h\|+\|\tilde u_h\|_{\tilde 1,\rho,h}\leq C_2 \|f\|_{*,\rho},
\end{equation}
where $C_2$ is a constant independent of $\rho$ and $h$ and $ \|f\|_{*,\rho}=\sup\limits_{\tilde v_h \in \tilde {V}_h}\frac{(f, v_h)_{\mathcal T_h}}{\|\tilde v_h\|_{\tilde 1,\rho,h}}$.
\end{corollary}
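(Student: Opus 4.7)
This is meant to be read as a direct corollary of Part~\ref{wellposed:Ah:grad} of Theorem~\ref{wellposed:Ah:divgrad}, packaged through the Brezzi-style abstract framework of Theorem~\ref{uniform:error:Au:th}. I would carry out three short steps, with no serious obstacle.

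First, I would take $\rho_0$ to be exactly the constant supplied by Part~\ref{wellposed:Ah:grad} of Theorem~\ref{wellposed:Ah:divgrad}. Under the hypothesis $\nabla_h V_h\subset \boldsymbol Q_h$, that theorem delivers, for every $0<\rho\leq\rho_0$, a uniform inf-sup constant $\beta_1>0$ (independent of $\rho$ and $h$) for $A_h((\cdot,\cdot),(\cdot,\cdot))$ on $\boldsymbol Q_h\times\tilde V_h$ measured in the norm $\|(\boldsymbol q_h,\tilde v_h)\|^2_{\boldsymbol U_{h,\theta}}=\|\boldsymbol q_h\|^2+\|\tilde v_h\|^2_{\tilde 1,\rho,h}$. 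Since \eqref{Stabilizedpweak} with $\tau=\rho^{-1}h_K^{-1}$ is a square finite-dimensional linear system and the inf-sup bound forces injectivity of its stiffness operator, existence and uniqueness of $(\boldsymbol p_h,\tilde u_h)$ follow at once.

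Second, I would plug the solution into the uniform inf-sup inequality and use the discrete equation to rewrite the numerator:
\begin{equation*}
\beta_1\,\|(\boldsymbol p_h,\tilde u_h)\|_{\boldsymbol U_{h,\theta}}
\;\leq\;
\sup_{(\boldsymbol q_h,\tilde v_h)\in \boldsymbol U_h}
\frac{A_h((\boldsymbol p_h,\tilde u_h),(\boldsymbol q_h,\tilde v_h))}{\|(\boldsymbol q_h,\tilde v_h)\|_{\boldsymbol U_{h,\theta}}}
\;=\;
\sup_{(\boldsymbol q_h,\tilde v_h)\in \boldsymbol U_h}
\frac{-(f,v_h)_{\mathcal T_h}}{\|(\boldsymbol q_h,\tilde v_h)\|_{\boldsymbol U_{h,\theta}}}.
\end{equation*}
Because $\|\tilde v_h\|_{\tilde 1,\rho,h}\leq \|(\boldsymbol q_h,\tilde v_h)\|_{\boldsymbol U_{h,\theta}}$, the right-hand side is bounded above by $\|f\|_{*,\rho}$ straight from the definition of that dual norm.

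Third, I would conclude using the elementary inequality $\|\boldsymbol p_h\|+\|\tilde u_h\|_{\tilde 1,\rho,h}\leq \sqrt{2}\,\|(\boldsymbol p_h,\tilde u_h)\|_{\boldsymbol U_{h,\theta}}$ and set $C_2:=\sqrt{2}/\beta_1$, which is independent of $\rho$ and $h$ because $\beta_1$ is. The only delicate bookkeeping point is that the inf-sup supremum ranges over all of $\boldsymbol Q_h\times\tilde V_h$ while the dual norm $\|f\|_{*,\rho}$ sees only the $\tilde v_h$ component; this is resolved by the trivial bound on $\|\tilde v_h\|_{\tilde 1,\rho,h}$ by the full product norm, as used above. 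All genuine work — in particular the uniform inf-sup — is already absorbed into Theorem~\ref{wellposed:Ah:divgrad}, so there is no further obstacle.
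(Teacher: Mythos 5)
Your proposal is correct and is exactly the argument the paper intends: the corollary is stated as an immediate consequence of Part~\ref{wellposed:Ah:grad} of Theorem~\ref{wellposed:Ah:divgrad} via the standard Babu\v{s}ka/LBB stability bound, with existence and uniqueness following from the uniform inf-sup condition on the square discrete system and the estimate obtained by testing the inf-sup inequality against the discrete equation. The handling of the dual norm (bounding $\|\tilde v_h\|_{\tilde 1,\rho,h}$ by the full product norm) and the choice $C_2=\sqrt{2}/\beta_1$ are the right bookkeeping; no further work is needed.
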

\begin{remark}\label{uniform_stability}
From the above corollary and the discrete Poincar{\'e}--Friedrichs inequalities for piecewise $H^1$ functions \cite{brenner2003poincare},
that is $\|v_h\|\lesssim \|\nabla_h v_h\|+\sum\limits_{e\in \mathcal{E}_h}h^{-1}_e\|\lbrack\!\lbrack v_h\rbrack\!\rbrack\|_{0,e}$, we further have 
$\|\boldsymbol{p}_h\|+\|\tilde u_h\|_{\tilde 1,\rho,h}\leq C_2 \|f\|$.
\end{remark}
\begin{remark}
By the uniform stability results of the HDG method, namely Corollary \ref{uniform_stable}, we 
can prove that the solution of the HDG method converges to the solution of the primal conforming method 
when the parameter $\rho$ approaches to zero, see Section \ref{relationship}.  
\end{remark}
Next, for the WG method, the parameter $\theta=\eta$ in \eqref{general:formulation} and the bilinear 
form is given by \eqref{WG:Aw}, and the space $\boldsymbol U_h=\boldsymbol {\tilde Q_h}\times {V_h},~ \boldsymbol{\mathcal U_h}=(\boldsymbol {\tilde p_h}, u_h)$.
In first case, the parameter $\eta=\rho h_K$ and the norm 
$\|\boldsymbol{\mathcal U_{h}}\|^2_{\boldsymbol U_{h,\theta}}=\|\boldsymbol{\tilde p}_h\|^2_{0,h,\rho}+\|u_h\|^2_{1,h,\rho}$. In the second case, the parameter $\eta=\rho^{-1} h^{-1}_K$, and the norm $\|\boldsymbol{\mathcal U_{h}}\|^2_{\boldsymbol U_{h,\theta}}=\|\boldsymbol{\tilde p}_h\|^2_{\widetilde{\rm div},\rho,h}+\|u_h\|^2$.
\begin{theorem}\label{wellposed:Aw:graddiv} 
We have two uniform stability results for the WG method as follows:
\begin{enumerate}
\item  \label{wellposed:Aw:grad} Assume $\nabla_h V_h\subset \boldsymbol Q_h$,  then for any $0<\rho\leq 1$ the bilinear form $A_w((\cdot, \cdot),(\cdot, \cdot))$ with $\eta=\rho h_K$ is uniformly stable with respect to the norms defined by \eqref{WG:grad:norm};
\item  \label{wellposed:Aw:div} Let $\boldsymbol R_h\subset H({\rm div}, \Omega)\cap
\boldsymbol Q_h$ be the Raviart-Thomas finite element space. Assume
that $\{\!\!\{\boldsymbol R_h\}\!\!\}\subset \hat Q_h$ and
$V_h = {\rm div}_h \boldsymbol Q_h$, then for any $0<\rho\leq 1$ the bilinear form $A_w((\cdot, \cdot),(\cdot, \cdot))$ with $\eta=\rho^{-1} h^{-1}_K$  is uniformly stable with respect to the norms defined by \eqref{WG:div:norm}. 
\end{enumerate}
\end{theorem}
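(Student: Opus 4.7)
The plan is to invoke Brezzi's theory for symmetric saddle-point problems applied to the block structure $A_w = a_w + b_w + b_w$ on $\boldsymbol{\tilde Q}_h \times V_h$. In both parts, this reduces the claimed uniform inf-sup condition for $A_w$ to four ingredients, each to be verified with constants independent of $\rho\in(0,1]$ and $h$: continuity of $a_w$ on $\boldsymbol{\tilde Q}_h$, continuity of $b_w$ on $\boldsymbol{\tilde Q}_h\times V_h$, coercivity of $a_w$ on $\ker b_w$, and an inf-sup condition for $b_w$. Continuity of $a_w$ in each case is immediate from Cauchy--Schwarz, so the real work is distributing the remaining three conditions between the two parts.

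For part \ref{wellposed:Aw:grad} (the gradient-type norms with $\eta=\rho h_K$), the form $a_w(\tilde q_h,\tilde q_h)$ equals $\|\tilde q_h\|_{0,h,\rho}^2$ by definition, so coercivity holds on the full space with constant $1$. For continuity of $b_w$, I would rewrite $\langle \boldsymbol{\hat q}_h\cdot\boldsymbol n,v_h\rangle_{\partial\mathcal T_h}=\langle \hat q_e,\hat Q_e[v_h]\rangle_{\mathcal E_h}$ using the single-valuedness of $\boldsymbol{\hat q}_h$ and $\hat q_e\in \hat Q(e)$, then apply Cauchy--Schwarz together with the trace bound $h_K\|\boldsymbol{\hat q}_h\cdot\boldsymbol n\|_{\partial K}^2 \lesssim \|q_h\|_K^2+ h_K\|(\boldsymbol q_h-\boldsymbol{\hat q}_h)\cdot \boldsymbol n\|_{\partial K}^2$ to control $\sum_e h_e\|\hat q_e\|_e^2$ by $\rho^{-1}\|\tilde q_h\|_{0,h,\rho}^2$; the factor $\rho^{-1/2}$ then pairs with the $\rho^{-1/2}$ of the jump seminorm in $\|v_h\|_{1,h,\rho}$. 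For the inf-sup on $b_w$, given $v_h\in V_h$ I would choose $\boldsymbol q_h=\nabla_h v_h\in\boldsymbol Q_h$ (legitimate by the hypothesis $\nabla_h V_h\subset \boldsymbol Q_h$) together with $\boldsymbol{\hat q}_h=-\rho^{-1}h_e^{-1}\hat Q_e[v_h]\,\boldsymbol n_e$, yielding $b_w(\tilde q_h,v_h)=\|v_h\|_{1,h,\rho}^2$, and control $\|\tilde q_h\|_{0,h,\rho}\lesssim \|v_h\|_{1,h,\rho}$ by the same trace/inverse estimates.

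For part \ref{wellposed:Aw:div} (the divergence-type norms with $\eta=\rho^{-1}h_K^{-1}$), I would first integrate by parts to write $b_w(\tilde q_h,v_h)=-(\DGdiv_h\boldsymbol q_h,v_h)_{\mathcal T_h}+\langle (\boldsymbol q_h-\boldsymbol{\hat q}_h)\cdot\boldsymbol n,v_h\rangle_{\partial \mathcal T_h}$; continuity then follows from the inverse trace $h_K\|v_h\|_{\partial K}^2\lesssim \|v_h\|_K^2$. The central step is coercivity on the kernel: for $\tilde q_h\in\ker b_w$, the hypothesis $V_h=\DGdiv_h\boldsymbol Q_h$ allows the test $v_h=\DGdiv_h\boldsymbol q_h$, giving $\|\DGdiv_h\boldsymbol q_h\|^2=\langle (\boldsymbol q_h-\boldsymbol{\hat q}_h)\cdot\boldsymbol n,\DGdiv_h\boldsymbol q_h\rangle_{\partial \mathcal T_h}$, whence Cauchy--Schwarz together with $h_K\|\DGdiv_h\boldsymbol q_h\|_{\partial K}^2\lesssim\|\DGdiv_h\boldsymbol q_h\|_K^2$ yields $\|\DGdiv_h\boldsymbol q_h\|^2\lesssim \sum_K h_K^{-1}\|(\boldsymbol q_h-\boldsymbol{\hat q}_h)\cdot\boldsymbol n\|_{\partial K}^2\le \rho\, a_w(\tilde q_h,\tilde q_h)\le a_w(\tilde q_h,\tilde q_h)$. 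Combined with $a_w(\tilde q_h,\tilde q_h)\ge (c\boldsymbol q_h,\boldsymbol q_h)+\rho^{-1}\sum h_K^{-1}\|(\boldsymbol q_h-\boldsymbol{\hat q}_h)\cdot\boldsymbol n\|_{\partial K}^2$, this produces a uniform lower bound by $\|\tilde q_h\|^2_{\widetilde{\rm div},\rho,h}$.

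Finally, for the inf-sup of $b_w$ in part \ref{wellposed:Aw:div}, I would exploit the Raviart--Thomas hypothesis: given $v_h\in V_h$, solve $-\Delta w=v_h$ with $w\in H^1_0(\Omega)$ and set $\boldsymbol r_h:=\Pi^{RT}(-\nabla w)\in \boldsymbol R_h\subset\boldsymbol Q_h$, so that $\DGdiv \boldsymbol r_h=v_h$ (via the commuting property of $\Pi^{RT}$ together with $V_h=\DGdiv_h\boldsymbol Q_h$) and $\|\boldsymbol r_h\|_{H(\mathrm{div})}\lesssim \|v_h\|$. Then define $\boldsymbol{\hat r}_h:=(\boldsymbol r_h\cdot\boldsymbol n_e)\boldsymbol n_e\in \boldsymbol{\hat Q}_h$, legitimate by $\{\!\!\{\boldsymbol R_h\}\!\!\}\subset \hat Q_h$; the pair $\tilde{\boldsymbol r}_h$ satisfies $(\boldsymbol r_h-\boldsymbol{\hat r}_h)\cdot\boldsymbol n_K=0$ on every face, so $b_w(\tilde{\boldsymbol r}_h,v_h)=-\|v_h\|^2$ while $\|\tilde{\boldsymbol r}_h\|_{\widetilde{\rm div},\rho,h}\lesssim \|v_h\|$. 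The main obstacle I expect is the coercivity-on-kernel step in part \ref{wellposed:Aw:div}, because it is the only place where the hypothesis $V_h=\DGdiv_h\boldsymbol Q_h$ really enters nontrivially; without that assumption one cannot test against $\DGdiv_h\boldsymbol q_h$ to transfer information from the jump term to the divergence seminorm, and the control of $\|\DGdiv_h\boldsymbol q_h\|$ would collapse.
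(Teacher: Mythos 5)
Your proposal is correct, and for part \ref{wellposed:Aw:grad} it coincides with the paper's proof essentially step for step: coercivity of $a_w$ on all of $\boldsymbol{\tilde Q}_h$ with constant $1$ from the definition of $\|\cdot\|_{0,h,\rho}$, continuity of $b_w$ via the rewriting $b_w(\boldsymbol{\tilde p}_h,v_h)=(\boldsymbol p_h,\nabla_h v_h)_{\mathcal T_h}-\langle\boldsymbol{\hat p}_h,\llbracket v_h\rrbracket\rangle_{\mathcal E_h}$ plus the trace/inverse bound on $h_e\|\boldsymbol{\hat p}_h\cdot\boldsymbol n_e\|_{0,e}^2$, and the inf-sup test pair $\boldsymbol p_h=\nabla_h v_h$, $\boldsymbol{\hat p}_h=-\rho^{-1}h_e^{-1}\hat Q_e([v_h])\boldsymbol n_e$ is exactly the paper's choice. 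For part \ref{wellposed:Aw:div} you reach the same conclusion by a genuinely different assembly: the paper never passes through coercivity on $\ker b_w$, but instead proves the inf-sup for the full form $A_w$ directly by testing $(\boldsymbol{\tilde p}_h,u_h)$ against $\boldsymbol q_h=\boldsymbol r_h+\alpha\boldsymbol p_h$, $\boldsymbol{\hat q}_h=\alpha\boldsymbol{\hat p}_h+(\boldsymbol r_h\cdot\boldsymbol n_e)\boldsymbol n_e$, $v_h=-{\rm div}_h\boldsymbol p_h-\alpha u_h$ and absorbing the cross terms with two Young parameters $\epsilon_1,\epsilon_2$ and a final choice of $\alpha$. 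Your version splits the same two key constructions into the textbook Brezzi conditions: the $H({\rm div})$-conforming lifting $\boldsymbol r_h\in\boldsymbol R_h$ with prescribed divergence and single-valued normal trace gives the $b_w$ inf-sup (this is where $\{\!\!\{\boldsymbol R_h\}\!\!\}\subset\hat Q_h$ enters, so that $(\boldsymbol r_h-\boldsymbol{\hat r}_h)\cdot\boldsymbol n_K=0$ and the stabilization part of $\|\boldsymbol{\tilde r}_h\|_{\widetilde{\rm div},\rho,h}$ vanishes), while testing against $v_h={\rm div}_h\boldsymbol q_h$ on the kernel, legitimate precisely because $V_h={\rm div}_h\boldsymbol Q_h$, yields $\|{\rm div}_h\boldsymbol q_h\|^2\lesssim\rho\,a_w(\boldsymbol{\tilde q}_h,\boldsymbol{\tilde q}_h)\le a_w(\boldsymbol{\tilde q}_h,\boldsymbol{\tilde q}_h)$ and hence uniform kernel coercivity. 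What your route buys is that on the kernel the troublesome cross term $\langle(\boldsymbol q_h-\boldsymbol{\hat q}_h)\cdot\boldsymbol n,{\rm div}_h\boldsymbol q_h\rangle_{\partial\mathcal T_h}$ is controlled through an exact identity rather than $\epsilon$-juggling, which makes the uniformity in $\rho$ more transparent; the paper's one-shot test function avoids invoking the abstract equivalence between the four Brezzi conditions and the inf-sup for the combined form in the product norm. The only point worth tightening is your construction of $\boldsymbol r_h$ as $\Pi^{RT}(-\nabla w)$, which tacitly requires enough regularity of $w$ for the Raviart--Thomas interpolant to be defined; the paper sidesteps this by quoting the well-posedness (discrete inf-sup) of the conforming mixed pair $\boldsymbol R_h\times V_h$ directly, which yields the same bound $\|\boldsymbol r_h\|+\|{\rm div}\,\boldsymbol r_h\|\lesssim\|v_h\|$ with less fuss.
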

From part \ref{wellposed:Aw:grad} of the above theorem, we have the following corollary:
\begin{corollary}\label{uniform_Stable_WG_Hong}
Assume $\nabla_h V_h\subset \boldsymbol Q_h$, then there exists a unique solution 
$(\boldsymbol{ \tilde p}_h,  {u}_h)\in \boldsymbol  {\tilde Q}_h\times {V}_h$ that 
satisfies \eqref{Stabilizedpweak_WG} with $\eta=\rho h_K$, and for any $0<\rho\leq 1$ 
the following estimates holds:
\begin{equation}
 \|\boldsymbol{\tilde p}_h\|_{0,h,\rho}+\|u_h\|_{1,h,\rho}\leq C_3 \|f\|_{*,\rho},
\end{equation}
where $C_3$ is a constant uniform with respect to $\rho$ and $h$ and $ \|f\|_{*,\rho}=\sup\limits_{v_h \in \tilde {V}_h}\frac{(f, v_h)_{\mathcal T_h}}{\|v_h\|_{1,h,\rho}}$.
\end{corollary}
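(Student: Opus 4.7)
The plan is to derive this corollary as a direct consequence of Theorem \ref{wellposed:Aw:graddiv} part \ref{wellposed:Aw:grad}. Under the standing hypothesis $\nabla_h V_h\subset \boldsymbol Q_h$, that theorem provides, for every $0<\rho\le 1$, a uniform inf-sup constant $\beta_1>0$ (independent of $\rho$ and $h$) for $A_w$ on $\boldsymbol U_h=\boldsymbol{\tilde Q}_h\times V_h$ equipped with the product norm $\|\boldsymbol{\mathcal U_h}\|^2_{\boldsymbol U_{h,\theta}}=\|\boldsymbol{\tilde p}_h\|^2_{0,h,\rho}+\|u_h\|^2_{1,h,\rho}$, together with uniform continuity. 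Given this, both existence/uniqueness and the asserted a priori bound will fall out by standard linear-algebra and duality arguments, much as in the HDG counterpart Corollary \ref{uniform_stable}.

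First, I would handle existence and uniqueness. Since $\boldsymbol U_h$ is finite-dimensional with coinciding trial and test spaces, it suffices to verify injectivity of the linear operator associated with $A_w$. Any $\boldsymbol{\mathcal U_h}$ satisfying $A_w(\boldsymbol{\mathcal U_h},\boldsymbol{\mathcal V_h})=0$ for all $\boldsymbol{\mathcal V_h}\in\boldsymbol U_h$ forces $\|\boldsymbol{\mathcal U_h}\|_{\boldsymbol U_{h,\theta}}=0$ once the uniform inf-sup \eqref{uniform:inf-sup:Au} is invoked with the roles of trial and test swapped. That swap is legitimate here because $A_w$ is symmetric and the discrete system is square, so the two forms of the inf-sup condition are equivalent with the same constant $\beta_1$. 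Injectivity then promotes to bijectivity, so \eqref{Stabilizedpweak_WG} admits a unique solution $(\boldsymbol{\tilde p}_h,u_h)$.

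Next, I would obtain the stability bound by specializing the inf-sup inequality to the discrete solution itself. Setting $\boldsymbol{\mathcal U_h}=(\boldsymbol{\tilde p}_h,u_h)$ and writing $\boldsymbol{\mathcal V_h}=(\boldsymbol{\tilde q}_h,v_h)$, and using $A_w(\boldsymbol{\mathcal U_h},\boldsymbol{\mathcal V_h})=-(f,v_h)_{\mathcal T_h}$, the symmetric form of \eqref{uniform:inf-sup:Au} yields
\begin{equation*}
\beta_1 \|\boldsymbol{\mathcal U_h}\|_{\boldsymbol U_{h,\theta}} \le \sup_{\boldsymbol{\mathcal V_h}\ne 0}\frac{|(f,v_h)_{\mathcal T_h}|}{\|\boldsymbol{\mathcal V_h}\|_{\boldsymbol U_{h,\theta}}}.
\end{equation*}
Since $\|v_h\|_{1,h,\rho}\le \|\boldsymbol{\mathcal V_h}\|_{\boldsymbol U_{h,\theta}}$ by definition of the product norm, the right-hand side is bounded by $\|f\|_{*,\rho}$, and the estimate follows with $C_3=1/\beta_1$, uniform in $\rho$ and $h$.

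There is no genuine obstacle inside this corollary itself; the hard analytical work — establishing the uniform inf-sup and continuity of Theorem \ref{wellposed:Aw:graddiv} — sits upstream, and once it is in hand the corollary is a few lines of duality together with the symmetric-square promotion of the inf-sup condition to its mirror form.
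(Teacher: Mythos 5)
Your argument is correct and matches the paper's intent: the paper treats this corollary as an immediate consequence of part 1 of Theorem \ref{wellposed:Aw:graddiv} (uniform continuity plus uniform inf-sup in the norms \eqref{WG:grad:norm}) and gives no separate proof, and your write-up simply fills in the standard finite-dimensional Babu\v{s}ka-type argument — symmetry of $A_w$ to pass between the two mirror inf-sup conditions, injectivity implying bijectivity on the square system, and the duality bound $\sup_{\boldsymbol{\mathcal V_h}}|(f,v_h)_{\mathcal T_h}|/\|\boldsymbol{\mathcal V_h}\|_{\boldsymbol U_{h,\theta}}\le \|f\|_{*,\rho}$ with $C_3=1/\beta_1$. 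No gaps.
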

\begin{remark}
From the above theorem, we improved the result in \cite{wang2014weak} by proving the well-posedness 
of the WG method for any $0<\rho \leq1$, while in \cite{wang2014weak} the inf-sup condition for some constant $\rho$ (for example $\rho=1$) was proved. 
\end{remark}
From part \ref{wellposed:Aw:div} of Theorem \ref{wellposed:Aw:graddiv}, we have the following corollary: 
\begin{corollary}\label{uniform_stable_WG}
Assume the spaces $ \boldsymbol {\tilde Q}_h\times {V}_h$ satisfy the conditions in part \ref{wellposed:Aw:div} of Theorem \ref{wellposed:Aw:graddiv}, then there exists a unique solution $(\boldsymbol{\tilde p}_h,  {u}_h)\in \boldsymbol {\tilde Q}_h\times {V}_h$ that 
satisfies \eqref{Stabilizedpweak_WG} with $\eta=\rho^{-1}h_K^{-1}$, and for any $0<\rho\leq 1$ 
the following estimates holds:
\begin{equation}
 \|\boldsymbol{\tilde p}_h\|_{\widetilde{\rm div},h,\rho}+\|u_h\|\leq C_4 \|f\|,
\end{equation}
where $C_4$ is a uniform constant with respect to $\rho$ and $h$.
\end{corollary}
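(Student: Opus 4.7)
The plan is to extract this corollary as a direct consequence of part \ref{wellposed:Aw:div} of Theorem \ref{wellposed:Aw:graddiv}, which does all the heavy lifting. Under the stated hypotheses on $\boldsymbol{\tilde Q}_h\times V_h$ (namely $\boldsymbol R_h\subset H({\rm div},\Omega)\cap\boldsymbol Q_h$ with $\{\!\!\{\boldsymbol R_h\}\!\!\}\subset \hat Q_h$ and $V_h={\rm div}_h \boldsymbol Q_h$), that theorem gives uniform continuity \eqref{uniform:continuous:Au} and a uniform inf-sup condition \eqref{uniform:inf-sup:Au} for $A_w$ with $\eta=\rho^{-1}h_K^{-1}$, measured in the product norm
$\|\boldsymbol{\mathcal U_h}\|^2_{\boldsymbol U_{h,\theta}}=\|\boldsymbol{\tilde p}_h\|^2_{\widetilde{\rm div},\rho,h}+\|u_h\|^2$
from \eqref{WG:div:norm}. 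Since $A_w$ is square and we are in finite dimension, the uniform inf-sup condition immediately implies that the discrete system \eqref{Stabilizedpweak_WG} admits a unique solution $(\boldsymbol{\tilde p}_h, u_h)$, which is the first assertion.

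For the stability bound, I would combine the inf-sup inequality with the discrete equation $A_w(\boldsymbol{\mathcal U_h},\boldsymbol{\mathcal V_h})=F(\boldsymbol{\mathcal V_h})=-(f,v_h)_{\mathcal T_h}$ to obtain
\begin{equation*}
\|\boldsymbol{\mathcal U_h}\|_{\boldsymbol U_{h,\theta}}
\leq \beta_1^{-1}\sup_{\boldsymbol{\mathcal V_h}\in\boldsymbol U_h}
\frac{|F(\boldsymbol{\mathcal V_h})|}{\|\boldsymbol{\mathcal V_h}\|_{\boldsymbol U_{h,\theta}}}.
\end{equation*}
For any test $\boldsymbol{\mathcal V_h}=(\boldsymbol{\tilde q}_h,v_h)$, Cauchy--Schwarz gives $|(f,v_h)_{\mathcal T_h}|\leq \|f\|\,\|v_h\|\leq \|f\|\,\|\boldsymbol{\mathcal V_h}\|_{\boldsymbol U_{h,\theta}}$, so the supremum is bounded by $\|f\|$ independently of $\rho$ and $h$. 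Finally the elementary inequality $\|\boldsymbol{\tilde p}_h\|_{\widetilde{\rm div},\rho,h}+\|u_h\|\leq \sqrt{2}\,\|\boldsymbol{\mathcal U_h}\|_{\boldsymbol U_{h,\theta}}$ converts the product norm bound into the stated form, yielding $C_4$ uniform in $\rho$ and $h$.

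I do not anticipate any real obstacle here: the substance of the estimate is entirely contained in the uniform inf-sup statement of Theorem \ref{wellposed:Aw:graddiv}, whose proof (postponed to Section \ref{sec:HDG:WG}) is where the genuine work — carefully exploiting $\{\!\!\{\boldsymbol R_h\}\!\!\}\subset \hat Q_h$ and $V_h={\rm div}_h\boldsymbol Q_h$ to construct suitable inf-sup test pairs — is done. The present corollary is simply a routine repackaging of that result together with a single Cauchy--Schwarz estimate on the load functional.
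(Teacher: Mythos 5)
Your argument is correct and is exactly how the paper obtains this corollary: the paper states it without separate proof as an immediate consequence of part \ref{wellposed:Aw:div} of Theorem \ref{wellposed:Aw:graddiv}, with the only point of substance being the one you identify, namely that $|(f,v_h)_{\mathcal T_h}|\leq \|f\|\,\|v_h\|$ bounds the load functional directly by $\|f\|$ because the test component $v_h$ is measured in the plain $L^2$ norm in \eqref{WG:div:norm} (and symmetry of $A_w$ lets you pass between the two forms of the inf-sup condition). No further comment is needed.
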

\begin{remark}
By the above uniform stability result of the WG method, namely Corollary \ref{uniform_stable_WG}, 
we can prove that the solution of the WG method converges to the solution of the mixed conforming method when 
the parameter $\rho$ approaches to zero, see Section \ref{relationship}.  
\end{remark}

\section{Uniform Error Estimates of HDG and WG}\label{sec:Error:HDG:WG}
In this section, based on the uniform stability results shown in Section \ref{sec:framework}, 
we provide the error analysis for HDG and WG methods and obtain uniformly optimal error estimates for 
HDG and WG methods.

\subsection{Error Estimate of HDG Method}
\begin{theorem}\label{Error:uniform_HDG:mixed:rate}
Let $(\boldsymbol p,  u)\in H({\rm div}, \Omega)\times L^2(\Omega)$ be the solution of \eqref{H1} and $\boldsymbol p\in \boldsymbol {H^{k+1}(\Omega)},{\rm div} \boldsymbol p\in H^{k+1}(\Omega), u\in H^{k+1}(\Omega) (k\ge 0)$, and $(\boldsymbol{p}_h,  \tilde{u}_h)\in \boldsymbol {Q}_h\times \tilde{V}_h$ be the solution of \eqref{Stabilizedpweak} with $\tau=\rho h_K $. If we choose the spaces $V_h\times\boldsymbol Q_h\times \hat V_h=V_h^k\times\boldsymbol Q_h^{k,RT}\times \hat V_h^k$, then for any $0<\rho\le1$ the following estimate holds:
\begin{equation}
 \|\boldsymbol p-\boldsymbol{ p}_h\|_{{\rm div},\rho,h}+\|u-\tilde {u}_h\|_{0,\rho,h}\leq C_{r,1} h^{k+1} (|\boldsymbol p|_{k+1}+|{\rm div}\boldsymbol p|_{k+1}+|u|_{k+1}),
\end{equation}
where $ C_{r,1}$ is a constant independent of $h$ and $\rho$.
\end{theorem}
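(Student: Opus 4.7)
The plan is to invoke Theorem~\ref{uniform:error:Au:th} specialized to the HDG divergence-norm setting, where the required uniform stability is supplied by part~\ref{wellposed:Ah:div} of Theorem~\ref{wellposed:Ah:divgrad}. That reduces the error estimate to bounding the best-approximation quantity $\inf_{\boldsymbol{\mathcal V_h}}\|\boldsymbol{\mathcal U}-\boldsymbol{\mathcal V_h}\|_{\boldsymbol U_{h,\theta}}$ in the norm defined by \eqref{HDG:div:norm}, with the stated RT element choice $V_h^k\times\boldsymbol Q_h^{k,RT}\times\hat V_h^k$.

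The first step is to verify consistency~\eqref{general: consistence} for the lifted true solution $\boldsymbol{\mathcal U}=(\boldsymbol p,u,u|_{\mathcal E_h})$. For the first equation, element-wise integration by parts combined with $c\boldsymbol p=-\nabla u$ and the single-valuedness of $u$ on interior faces gives $a_h(\boldsymbol p,\boldsymbol q_h)+b_h(\boldsymbol q_h,\tilde u)=0$. For the second equation, $c_h(\tilde u,\tilde v_h)$ vanishes outright since $u-u|_{\mathcal E_h}=0$ on $\partial\mathcal T_h$; using ${\rm div}\boldsymbol p=f$ then reduces what remains to checking $\langle\hat v_h,\boldsymbol p\cdot\boldsymbol n\rangle_{\partial\mathcal T_h}=0$, which follows from the identity~\eqref{equ:dg-identity_1} together with normal continuity of $\boldsymbol p\in H({\rm div},\Omega)$, single-valuedness of $\hat v_h$, and the boundary condition $\hat v_h|_{\mathcal E_h^\partial}=0$.

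Next I would pick the natural candidate $\boldsymbol{\mathcal V_h}=(\Pi^{RT}\boldsymbol p,P_h u,\hat P_h u)$, where $\Pi^{RT}$ is the Raviart-Thomas interpolant into $\boldsymbol Q_h^{k,RT}$ and $P_h,\hat P_h$ are the $L^2$ projections onto $V_h^k$ and $\hat V_h^k$, respectively. The individual pieces of $\|\boldsymbol{\mathcal U}-\boldsymbol{\mathcal V_h}\|_{\boldsymbol U_{h,\theta}}^2$ are then handled as follows: the weighted $L^2$ part $(c(\boldsymbol p-\Pi^{RT}\boldsymbol p),\boldsymbol p-\Pi^{RT}\boldsymbol p)$ by the standard RT interpolation estimate of order $k+1$; $\|{\rm div}(\boldsymbol p-\Pi^{RT}\boldsymbol p)\|^2$ by the commuting diagram property ${\rm div}\,\Pi^{RT}\boldsymbol p=P_h\,{\rm div}\,\boldsymbol p$; the jump penalty $\rho^{-1}\sum h_e^{-1}\|\hat P_e([\Pi^{RT}\boldsymbol p-\boldsymbol p])\|_{0,e}^2$, which vanishes identically; $\|u-P_h u\|^2$ by a standard $L^2$-projection error estimate; and $\rho\sum h_e\|u-\hat P_h u\|_{0,e}^2$ via a trace inequality plus the approximation property of $\hat P_h$ on adjacent elements, using $\rho\le 1$ to absorb the weight.

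The main point requiring care is the jump-penalty contribution, because its $\rho^{-1}$ weight would otherwise destroy uniformity as $\rho\to0$. The choice of the Raviart-Thomas space is precisely what rescues the estimate: both $\boldsymbol p\in H({\rm div},\Omega)$ and $\Pi^{RT}\boldsymbol p$ have continuous normal components across interior faces, so $[\Pi^{RT}\boldsymbol p-\boldsymbol p]=0$ and this potentially dangerous contribution is exactly zero rather than merely small. The remaining estimates are routine, and assembling them yields the claimed bound $h^{k+1}(|\boldsymbol p|_{k+1}+|{\rm div}\boldsymbol p|_{k+1}+|u|_{k+1})$ with a constant independent of $\rho\in(0,1]$ and $h$.
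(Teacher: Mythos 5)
Your proposal is correct and follows essentially the same route as the paper: reduce to the best-approximation error via Theorem~\ref{uniform:error:Au:th} and part~\ref{wellposed:Ah:div} of Theorem~\ref{wellposed:Ah:divgrad}, take the Raviart--Thomas interpolant for the flux so that the $\rho^{-1}$-weighted jump term vanishes by normal continuity, and use $L^2$-projection plus trace estimates (with $\rho\le 1$) for the scalar variables. The only cosmetic differences are that you spell out the consistency check (the paper delegates it to Lemma~\ref{consistent}) and take the face $L^2$ projection $\hat P_h u$ where the paper takes $\{Q_h(u)\}$; both choices yield the same bound.
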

\begin{proof}
From part \ref{wellposed:Ah:div} of Theorem \ref{wellposed:Ah:divgrad} and Theorem \ref{uniform:error:Au:th}, we have 
\begin{equation}
 \|\boldsymbol p-\boldsymbol{ p}_h\|_{{\rm div},\rho,h}+\|u-\tilde {u}_h\|_{0,\rho,h}\lesssim  \inf\limits_{\boldsymbol{q}_h\in \boldsymbol{Q}_h, \tilde v_h\in \tilde V_h } \Big(\|\boldsymbol p-\boldsymbol{ q}_h\|_{{{\rm div}, \rho,h}}+\|u-\tilde v_h\|_{0,\rho,h}\Big).
 \end{equation}
Hence we need to estimate:
\begin{equation}
 \inf\limits_{\boldsymbol{q}_h\in \boldsymbol{Q}_h, \tilde v_h\in \tilde V_h } \Big(\|\boldsymbol p-\boldsymbol{ q}_h\|_{{{\rm div}, \rho,h}}+\|u-\tilde v_h\|_{0,\rho,h}\Big).
\end{equation}
Now we choose $\boldsymbol q_h=\pi^{\rm div}_h\boldsymbol p\in \boldsymbol Q_h$, 
where $\pi^{\rm div}_h$ be the interpolation of $\boldsymbol p$ into the $H({\rm div})$-conforming  Raviart-Thomas ($RT$) finite element space, namely $\boldsymbol q_h\in \boldsymbol Q_h\cap H({\rm div}, \Omega)$. Since $\boldsymbol q_h\cdot \boldsymbol n$ is single-valued,  then by the approximation property of the $RT$ finite element space, we obtain:
\begin{equation}\label{Error:uniform_HDG:mixed:p}
\begin{split}
\|\boldsymbol p-\boldsymbol{q}_h\|^2_{{\rm div}, \rho,h}
= &(c ( \boldsymbol{p}-  \boldsymbol{q}_h), \boldsymbol{p}- \boldsymbol{q}_h)_{\mathcal T_h}+( {\rm div} (\boldsymbol{p}-\boldsymbol{q}_h), {\rm div}( \boldsymbol{p}- \boldsymbol{q}_h))_{\mathcal T_h}\\
&+\rho^{-1}\sum\limits_{e\in \mathcal{E}^i_h}h_e^{-1} \langle\hat P_e([ \boldsymbol{p}-\boldsymbol{q}_h]),\hat P_e([ \boldsymbol{p}-\boldsymbol{q}_h])\rangle_e\\
= &(c ( \boldsymbol{p}-  \boldsymbol{q}_h), \boldsymbol{p}- \boldsymbol{q}_h)_{\mathcal T_h}+( {\rm div} (\boldsymbol{p}-\boldsymbol{q}_h), {\rm div}( \boldsymbol{p}- \boldsymbol{q}_h))_{\mathcal T_h}\\
\lesssim & h^{2k+2}(|\boldsymbol p |^2_{k+1}+|{\rm div}\boldsymbol p|^2_{k+1}).
\end{split}
\end{equation}
Further, we choose $v_h=Q_h (u), \hat v_h=\{Q_h(u)\}$, where $Q_h$ is $L^2$ projection from $L^2(\Omega)$ to $V_h$. 
%and $\hat Q_h$ is the projection from $L^2(e)$ to $\mathcal P_k(e)$. 
Then, by using the approximation of $L^2$ projection, trace 
inequality and noting that $0<\rho\le 1$, we have:
\begin{equation}\label{Error:uniform_HDG:mixed:u}
\begin{split}
\|u-\tilde{v}_h\|^2_{0, \rho,h}&=(u-v_h, u-v_h)_{\mathcal T_h}+\rho \sum\limits_{e\in \mathcal{E}^i_h}h_e\langle u-\hat v_h,u-\hat v_h\rangle_e\\
&=(u-Q_h(u), u-Q_h(u))_{\mathcal T_h}+\rho \sum\limits_{e\in \mathcal{E}^i_h}h_e\langle u-\{Q_h(u)\},u-\{Q_h(u)\}\rangle_e\\
&\lesssim \|u-Q_h(u)\|^2+\rho \sum\limits_{e\in \mathcal{E}^i_h}h_e\big(h_e^{-1}\|u-Q_h(u)\|_{K_{e,1}\cup K_{e,2}}^2+h_e\|\nabla_h(u-Q_h(u))\|_{K_{e,1}\cup K_{e,2}}^2\big)\\
&\lesssim \|u-Q_h(u)\|^2+h^2_e\|\nabla_h(u-Q_h(u))\|^2\lesssim h^{2k+2}|u|^2_{k+1},
\end{split}
\end{equation}
where $K_{e,1},K_{e,2}$ are the elements sharing the edge $e$.
 
Combining \eqref{Error:uniform_HDG:mixed:p} and \eqref{Error:uniform_HDG:mixed:u}, we get the desired result.
\end{proof}
%\begin{theorem}\label{Error:uniform_HDG:primal}
%Let $(\boldsymbol p,  u)\in \boldsymbol {L^2(\Omega)}\times H^1(\Omega) $ be the solution of \eqref{H1} and $(\boldsymbol{\tilde p}_h,  {u}_h)\in \boldsymbol {\tilde Q}_h\times {V}_h$ be the solution of \eqref{Stabilizedpweak} with $\tau=\rho^{-1} h^{-1}_K$. If we choosing the spaces $\boldsymbol {\tilde Q}_h\times {V}_h$ such that $\nabla_h V_h\subset \boldsymbol Q_h$, then there exists a constant $\rho_0$ such that for any $0<\rho\leq \rho_0$ the following estimate holds
%\begin{equation}
% \|\boldsymbol p-\boldsymbol{ p}_h\|+\|u-\tilde {u}_h\|_{\tilde 1,\rho,h}\leq C_{e,4} \inf\limits_{\boldsymbol{q}_h\in \boldsymbol{Q}_h, \tilde {v}_h\in \tilde {V}_h } \Big( \|\boldsymbol p-\boldsymbol{ q}_h\|+\|u-\tilde {v}_h\|_{\tilde 1,\rho,h}\Big),
%\end{equation}
%where $C_{e,4} $ is a constant uniform with respect to $\rho$ and $h$.
%\end{theorem}
%\begin{proof}
%The proof is similar to the proof of Theorem \ref{Error:uniform_HDG:mixed}.%$\ref{Error:uniform_WG:Primal}.
%\end{proof}

\begin{theorem}\label{Error:uniform_HDG:primal:rate}
Let $(\boldsymbol p,  u)\in \boldsymbol {L^2(\Omega)}\times H^1(\Omega) $ be the solution of \eqref{H1} and $\boldsymbol p\in \boldsymbol {H^{k+1}(\Omega)}, u\in H^{k+2}(\Omega)$ $ (k\ge 0)$, and $(\boldsymbol{p}_h,  \tilde{u}_h)\in \boldsymbol {Q}_h\times \tilde{V}_h$ be the solution of \eqref{Stabilizedpweak} with $\tau=\rho^{-1}h_K^{-1} $. If we choose the spaces $V_h\times\boldsymbol Q_h\times \hat V_h= V_h^{k+1}\times\boldsymbol Q_h^{k}\times \hat V_h^{k+1}$, then 
there exists $\rho_0>0$ such that for any $0<\rho\le \rho_0$ the following estimate holds:
\begin{equation}
 \|\boldsymbol p-\boldsymbol{ p}_h\|+\|u-\tilde {u}_h\|_{\tilde 1,\rho,h}\leq C_{r,2} h^{k+1} (|\boldsymbol p|_{k+1}+|u|_{k+2}),
\end{equation}
where $ C_{r,2}$ is independent of $h$ and $\rho$.
\end{theorem}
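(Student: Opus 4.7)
The plan is to follow the same template as the proof of Theorem \ref{Error:uniform_HDG:mixed:rate}, but now invoking part \ref{wellposed:Ah:grad} of Theorem \ref{wellposed:Ah:divgrad} instead of part \ref{wellposed:Ah:div}. First I would verify the hypotheses of Theorem \ref{uniform:error:Au:th} for the HDG bilinear form $A_h$ with $\tau = \rho^{-1}h_K^{-1}$: consistency holds in the standard way (for the exact pair $\tilde u = (u, u|_{\mathcal{E}_h})$ the stabilization $c_h(\tilde u,\tilde v_h) = -\tau\langle u - u|_{\mathcal{E}_h}, v_h-\hat v_h\rangle_{\partial\mathcal T_h}=0$, and integration by parts recovers the PDE); uniform continuity of $A_h$ in the norm $\|\cdot\|^2 + \|\cdot\|^2_{\tilde 1,\rho,h}$ is immediate from Cauchy--Schwarz applied term by term; and the uniform inf-sup is exactly part \ref{wellposed:Ah:grad} of Theorem \ref{wellposed:Ah:divgrad}, which is applicable because with $V_h = V_h^{k+1}$ and $\boldsymbol Q_h = \boldsymbol Q_h^k$ one has $\nabla_h V_h \subset \boldsymbol Q_h$. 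This produces the threshold $\rho_0$ appearing in the statement and yields the abstract quasi-best approximation bound
\begin{equation*}
\|\boldsymbol p-\boldsymbol p_h\| + \|u-\tilde u_h\|_{\tilde 1,\rho,h}
\lesssim \inf_{\boldsymbol q_h\in\boldsymbol Q_h,\ \tilde v_h\in\tilde V_h}
\Big(\|\boldsymbol p - \boldsymbol q_h\| + \|u - \tilde v_h\|_{\tilde 1,\rho,h}\Big),
\end{equation*}
with a constant independent of $h$ and $\rho$.

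Next I would construct explicit interpolants to bound the infimum. For the flux, choose $\boldsymbol q_h$ to be the $L^2$-projection of $\boldsymbol p$ onto $\boldsymbol Q_h^k$; since only the pure $L^2$-norm appears (no divergence or jump terms in this regime), standard approximation gives $\|\boldsymbol p-\boldsymbol q_h\| \lesssim h^{k+1}|\boldsymbol p|_{k+1}$. For the scalar variable, take $v_h = I_h u$ where $I_h$ is a Lagrange (or Scott--Zhang) interpolant into the $H^1$-conforming subspace of $V_h^{k+1}$, and set $\hat v_h = v_h|_{\mathcal{E}_h}\in\hat V_h^{k+1}$, which is well defined since $v_h$ is continuous. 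Writing $\tilde u - \tilde v_h = (u - v_h,\, u|_{\mathcal{E}_h}-\hat v_h)$, the jump measure is
\begin{equation*}
(u-v_h) - (u|_{\mathcal{E}_h}-\hat v_h) = \hat v_h - v_h = 0 \quad\text{on each } \partial K,
\end{equation*}
so the stabilization contribution to $\|u-\tilde v_h\|_{\tilde 1,\rho,h}$ vanishes and only $\|\nabla_h(u-I_h u)\|$ survives, giving $\|u-\tilde v_h\|_{\tilde 1,\rho,h} \lesssim h^{k+1}|u|_{k+2}$ by standard interpolation estimates on $V_h^{k+1}$.

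The main (only) subtlety is the second step: using the conforming Lagrange-type interpolant to make the boundary term vanish is what makes the right-hand side of the abstract bound \emph{$\rho$-independent}; a naive elementwise $L^2$-projection would introduce a factor $\rho^{-1/2}$ through the boundary term, which would not cancel. Once the two interpolation bounds above are in place, summing them yields the claimed estimate with a constant $C_{r,2}$ that depends on neither $h$ nor $\rho\in(0,\rho_0]$.
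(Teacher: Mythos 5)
Your proposal is correct and follows essentially the same route as the paper's proof: invoke part \ref{wellposed:Ah:grad} of Theorem \ref{wellposed:Ah:divgrad} together with Theorem \ref{uniform:error:Au:th} to get the quasi-best approximation bound, take the elementwise $L^2$-projection for the flux, and take a conforming interpolant $v_h=\pi_h u$ with $\hat v_h|_{\partial K}=v_h|_{\partial K}$ so that the $\rho^{-1}h_K^{-1}$-weighted stabilization term vanishes identically. Your closing observation—that the conforming choice of $(v_h,\hat v_h)$ is precisely what keeps the bound $\rho$-independent—is exactly the point the paper's construction is designed to exploit.
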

\begin{proof}
From part \ref{wellposed:Ah:grad} of Theorem \ref{wellposed:Ah:divgrad} and Theorem \ref{uniform:error:Au:th},  we have 
\begin{equation}
 \|\boldsymbol p-\boldsymbol{p}_h\|+\|u-\tilde u_h\|_{\tilde 1,\rho,h}\lesssim \inf\limits_{\boldsymbol{q}_h\in \boldsymbol{Q}_h, \tilde v_h\in \tilde V_h } \Big(\|\boldsymbol p-\boldsymbol{ q}_h\|+\|u-\tilde v_h\|_{\tilde 1,\rho,h}\Big).
\end{equation}
Hence we need to estimate:
\begin{equation}
 \inf\limits_{\boldsymbol{q}_h\in \boldsymbol{Q}_h, \tilde v_h\in \tilde V_h } \Big(\|\boldsymbol p-\boldsymbol{ q}_h\|+\|u-\tilde v_h\|_{\tilde 1,\rho,h}\Big).
\end{equation}
Now we choose $\boldsymbol q_h=Q_h(\boldsymbol p)$, where $Q_h$ is $L^2$ projection from $\boldsymbol {L^2(\Omega)}$ to $\boldsymbol Q_h$. Then, by using the approximation of $L^2$ projection, we obtain:
\begin{equation}\label{Error:uniform_HDG:primal:p}
\|\boldsymbol p-\boldsymbol{q}_h\|\lesssim h^{k+1}|\boldsymbol p|_{k+1}.
\end{equation}
Further, we choose $v_h=\pi_h u$, where $\pi_h$ is the interpolation of $u$ to the continuous finite element 
space, namely $v_h\in V_h\cap H^1_0(\Omega)$. Since $v_h$ is in $H^1_0(\Omega)$, we can choose $\hat v_h$ such that $\hat v_h|_{\partial K}=v_h|_{\partial K}$, for any $K\in \mathcal T_h$, then we get the following convergence rate result:
\begin{equation}\label{Error:uniform_HDG:primal:u}
\begin{split}
\|u-\tilde {v}_h\|^2_{\tilde 1,\rho,h}
=&(\nabla_h(u- v_h),\nabla_h(u-v_h))_{\mathcal T_h}\\
&+\rho^{-1} \sum\limits_{K\in \mathcal{T}_h}h_K^{-1}\langle (u-v_h)-(u-\hat{v}_h), (u-v_h)-(u-\hat{v}_h)\rangle_{\partial K}\\
=&(\nabla(u- \pi_h u),\nabla(u-\pi_h u))_{\mathcal T_h}\lesssim h^{2k+2}|u|^2_{k+2}.
\end{split}
\end{equation}
Combining \eqref{Error:uniform_HDG:primal:p} and \eqref{Error:uniform_HDG:primal:u}, we get the desired result.
\end{proof}

\subsection{Error Estimate of the WG Method}
\begin{theorem}\label{WG-order:grad}
Let $(\boldsymbol p,  u)\in \boldsymbol {L^2(\Omega)}\times H^1(\Omega) $ be the solution of \eqref{H1} and $\boldsymbol p\in \boldsymbol {H^{k+1}(\Omega)}, u\in H^{k+2}(\Omega) $, and $(\boldsymbol{\tilde p}_h,  {u}_h)\in \boldsymbol {\tilde Q}_h\times {V}_h$ be the solution of \eqref{Stabilizedpweak_WG} with $\eta=\rho h_K$. If we choose the spaces ${V}_h\times\boldsymbol {Q}_h\times\hat Q_h={V}_h^{k+1}\times\boldsymbol {Q}_h^k\times\hat Q_h^k$, then for any $0<\rho\leq 1$ the following estimate holds:
\begin{equation}
 \|\boldsymbol p-\boldsymbol {\tilde p}_h\|_{0,h,\rho}+\|u-{u}_h\|_{1,h,\rho}\leq C_{r,3} h^{k+1} (|\boldsymbol p|_{k+1}+|u|_{k+2}),
\end{equation}
where $ C_{r,3}$ is independent of $h$ and $\rho$.
\end{theorem}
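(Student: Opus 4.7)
My plan mirrors the argument for the HDG primal case (Theorem \ref{Error:uniform_HDG:primal:rate}): first apply the abstract C\'ea-type bound under the parameter-dependent norm, then construct approximants tailored to kill or control the stabilization terms. By part \ref{wellposed:Aw:grad} of Theorem \ref{wellposed:Aw:graddiv} and Theorem \ref{uniform:error:Au:th}, I would reduce the error to best approximation:
$$
\|\boldsymbol p - \boldsymbol{\tilde p}_h\|_{0,h,\rho} + \|u - u_h\|_{1,h,\rho} \lesssim \inf_{(\boldsymbol{\tilde q}_h,v_h)\in \boldsymbol{\tilde Q}_h\times V_h}\bigl(\|\boldsymbol p - \boldsymbol{\tilde q}_h\|_{0,h,\rho}+\|u-v_h\|_{1,h,\rho}\bigr),
$$
where the exact $\boldsymbol p$ is identified with the pair $(\boldsymbol p,(\boldsymbol p\cdot\boldsymbol n_e)\boldsymbol n_e)$. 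Because $\boldsymbol p\in H({\rm div},\Omega)$ has a single-valued normal trace, $(\boldsymbol p-\boldsymbol{\hat p})\cdot\boldsymbol n_K\equiv 0$ on every $\partial K$, and $[u]=0$ on every edge.

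Next I would pick the explicit approximants. For the scalar variable, take $v_h=\pi_h u\in V_h^{k+1}\cap H^1_0(\Omega)$, the conforming nodal interpolant; since $[u-v_h]=0$ on every edge, the jump term in $\|u-v_h\|_{1,h,\rho}$ vanishes, leaving $\|\nabla_h(u-\pi_h u)\|\lesssim h^{k+1}|u|_{k+2}$ by standard interpolation. For the flux, take $\boldsymbol q_h=Q_h\boldsymbol p\in\boldsymbol Q_h^k$ and $\boldsymbol{\hat q}_h=\hat Q_e(\boldsymbol p\cdot\boldsymbol n_e)\boldsymbol n_e$, the element and edge $L^2$ projections respectively; the volume part of $\|\boldsymbol p-\boldsymbol{\tilde q}_h\|_{0,h,\rho}$ is then $O(h^{k+1}|\boldsymbol p|_{k+1})$ by the standard $L^2$-projection estimate.

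The step requiring the most care is the flux stabilization contribution
$$
\rho\sum_{K\in\mathcal T_h}h_K\bigl\|\bigl((\boldsymbol p-\boldsymbol q_h)-(\boldsymbol{\hat p}-\boldsymbol{\hat q}_h)\bigr)\cdot\boldsymbol n_K\bigr\|_{\partial K}^2.
$$
Using $(\boldsymbol p-\boldsymbol{\hat p})\cdot\boldsymbol n_K=0$, this collapses to $\rho\sum_K h_K\|(\boldsymbol q_h-\boldsymbol{\hat q}_h)\cdot\boldsymbol n_K\|_{\partial K}^2$. Inserting $\pm\boldsymbol p\cdot\boldsymbol n_K$ and applying the triangle inequality splits it into the trace error of $\boldsymbol p-Q_h\boldsymbol p$ on $\partial K$ (handled by the standard trace inequality together with $L^2$-projection bounds on the element) and the edge $L^2$-projection error of $\boldsymbol p\cdot\boldsymbol n_e$ onto $\hat Q(e)$; both scale like $h_K^{2k+1}|\boldsymbol p|_{k+1,\omega_K}^2$ in the $L^2(\partial K)$ norm squared, so the prefactor $h_K$ yields $h^{2k+2}$. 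Finally $0<\rho\le 1$ absorbs the remaining $\rho$, and combining with the scalar bound delivers the advertised $h^{k+1}$ rate.
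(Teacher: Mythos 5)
Your argument is correct and follows essentially the same route as the paper's proof: reduce to best approximation via the uniform stability result, take the conforming interpolant $\pi_h u$ so that the jump term in $\|u-v_h\|_{1,h,\rho}$ vanishes, and take the element $L^2$ projection $Q_h\boldsymbol p$ for the flux, with the stabilization term bounded by the trace inequality and $0<\rho\le 1$. The only (immaterial) difference is your choice of $\boldsymbol{\hat q}_h$ as the edge $L^2$ projection of $\boldsymbol p\cdot\boldsymbol n_e$, where the paper uses the average $\{Q_h(\boldsymbol p)\}$; both choices lie in $\hat{\boldsymbol Q}_h^k$ and yield the same $O(h^{k+1})$ bound.
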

\begin{proof}
From part $1$ of Theorem \ref{wellposed:Aw:graddiv} and Theorem \ref{uniform:error:Au:th}, we have 
\begin{equation}
\|\boldsymbol p-\boldsymbol{\tilde p}_h\|_{0, h,\rho}+\|u-u_h\|_{1,h,\rho}\lesssim \inf\limits_{\boldsymbol{\tilde q}_h\in \boldsymbol{\tilde Q}_h, v_h\in V_h } \Big(\|\boldsymbol p-\boldsymbol{\tilde q}_h\|_{0, h,\rho}+\|u-v_h\|_{1,h,\rho}\Big).
\end{equation}
Hence we need to estimate:
\begin{equation}
 \inf\limits_{\boldsymbol{\tilde q}_h\in \boldsymbol{\tilde Q}_h, v_h\in V_h } \Big(\|\boldsymbol p-\boldsymbol{\tilde q}_h\|_{0, h,\rho}+\|u-v_h\|_{1,h,\rho}\Big).
\end{equation}
Now for any $K\in \mathcal T_h$, we choose $\boldsymbol{\tilde q}_h=(\boldsymbol{q}_h, \boldsymbol{\hat q_h})=(Q_h(\boldsymbol p), \{Q_h(\boldsymbol p)\})$, where $Q_h$ is the local $L^2$ projection from $L^2(\Omega)$ to $\boldsymbol Q_h$.
%, and $\hat Q_h$ is the local $L^2$ projection from $L^2(e)$ to $P_k(e)$. 
By the approximation property of the $L^2$ projection, trace 
inequality and noting that $0<\rho\le 1$, we obtain:
\begin{equation}\label{Error:uniform_WG:Primal:p}
\begin{split}
\|\boldsymbol p-\boldsymbol{\tilde q}_h\|^2_{0, h,\rho}&=(c(\boldsymbol p-Q_h(\boldsymbol p)),\boldsymbol p- Q_h(\boldsymbol p))_{\mathcal T_h}+\rho \sum\limits_{K\in \mathcal T_h}h_K\|(\boldsymbol p- Q_h(\boldsymbol p)-(\boldsymbol p- \{Q_h(\boldsymbol p)\}))\cdot\boldsymbol n_K\|^2_{0,\partial K}\\
&\lesssim \|\boldsymbol p-Q_h(\boldsymbol p)\|^2_{\mathcal T_h}
+\sum\limits_{K\in \mathcal T_h}h_K\big(\|(\boldsymbol p- Q_h(\boldsymbol p))\cdot\boldsymbol n_K\|^2_{0,\partial K}+\|(\boldsymbol p- \{Q_h(\boldsymbol p)\})\cdot\boldsymbol n_K\|^2_{0,\partial K}\big)\\
&\lesssim \|\boldsymbol p-Q_h(\boldsymbol p)\|^2_{\mathcal T_h}
+\sum\limits_{K\in \mathcal T_h}\big(\|\boldsymbol p- Q_h(\boldsymbol p)\|^2_{0,K}+h_K^2\|\nabla(\boldsymbol p- Q_h(\boldsymbol p))\|^2_{0,K}\big)\\
&\lesssim h^{2k+2}|\boldsymbol p|^2_{k+1}.
\end{split}
\end{equation}
Next we choose $v_h=\pi_h u$, where $\pi_h$ is the interpolation of $u$ to the continuous finite element 
space, namely, $v_h\in V_h\cap H^1_0(\Omega)$, 
we immediately have:
\begin{equation}\label{Error:uniform_WG:Primal:u}
\begin{split}
\|u-{v}_h\|^2_{1,h,\rho}&=\|\nabla_h(u-v_h)\|^2+\rho^{-1}\sum_{e\in \mathcal E_h}h_e^{-1} \|\hat Q_e([u-v_h])\|^2_{0,e}\\
&=\|\nabla_h(u-v_h)\|^2=\|\nabla_h(u-\pi_h u)\|^2
\lesssim h^{2k+2}|u|^2_{k+2}.
\end{split}
\end{equation}  
Combining \eqref{Error:uniform_WG:Primal:p} and \eqref{Error:uniform_WG:Primal:u}, we get the desired result.
\end{proof}
%\begin{theorem}\label{Error:uniform_WG:mixed}
%Let $(\boldsymbol p,  u)\in H({\rm div}, \Omega)\times L^2(\Omega)$ be the solution of \eqref{mixed} and assume that $\boldsymbol p\in \boldsymbol {H^1(\Omega)}$. And let $(\boldsymbol{\tilde p}_h,  {u}_h)\in \boldsymbol {\tilde Q}_h\times {V}_h$ be the solution of \eqref{Stabilizedpweak_WG}. If we choose the spaces $\boldsymbol {\tilde Q}_h\times {V}_h$ such that the inf-sup condition \eqref{equ:WG-infsup-div} is satisfied, then for any $0<\rho\leq 1$ the following estimate holds
%\begin{equation}
% \|\boldsymbol p-\boldsymbol{\tilde p}_h\|_{\widetilde{\rm div},h,\rho}+\|u-u_h\|\leq C_{e,2}  \inf\limits_{\boldsymbol{\tilde q}_h\in \boldsymbol{\tilde Q}_h, v_h\in V_h } \Big(\|\boldsymbol p-\boldsymbol{\tilde q}_h\|_{\widetilde{\rm div},h,\rho}+\|u-v_h\|\Big),
%\end{equation}
%where $C_{e,2} $ is a constant uniform with respect to $\rho$ and $h$.
%\end{theorem}
%\begin{proof}
%The proof is similar to the proof of Theorem \ref{Error:uniform_WG:Primal}.
%\end{proof}
\begin{theorem}\label{WG-order:mixed}
Let $(\boldsymbol p,  u)\in H({\rm div}, \Omega)\times L^2(\Omega)$ be the solution of \eqref{H1} and $\boldsymbol p\in \boldsymbol {H^{k+1}(\Omega)}, {\rm div} \boldsymbol p\in  H^{k+1}(\Omega), u\in H^{k+1}(\Omega) $, and $(\boldsymbol{\tilde p}_h,  {u}_h)\in \boldsymbol {\tilde Q}_h\times {V}_h$ be the solution of \eqref{Stabilizedpweak_WG} with $\eta=\rho^{-1} h^{-1}_K$. If we choose the spaces ${V}_h\times\boldsymbol {Q}_h\times\hat Q_h={V}_h^k\times\boldsymbol {Q}_h^{k,RT}\times\hat Q_h^{k} $, then for any $0<\rho\leq 1$ the following estimate holds:
\begin{equation}
 \|\boldsymbol p-\boldsymbol {\tilde p}_h\|_{\widetilde{\rm div},h,\rho}+\|u-{u}_h\|\leq C_{r,4} h^{k+1} (|\boldsymbol p|_{k+1}+|{\rm div}\boldsymbol p|_{k+1}+|u|_{k+1}).
\end{equation}
where $ C_{r,4}$ is independent of $h$ and $\rho$.
\end{theorem}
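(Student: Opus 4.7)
The plan is to mirror the argument used in the proof of Theorem \ref{WG-order:grad}, but now invoke part \ref{wellposed:Aw:div} of Theorem \ref{wellposed:Aw:graddiv} combined with Theorem \ref{uniform:error:Au:th}. First I would verify that the triple ${V}_h^k\times\boldsymbol {Q}_h^{k,RT}\times\hat Q_h^{k}$ satisfies the structural hypotheses of part \ref{wellposed:Aw:div}: the $H({\rm div})$-conforming Raviart--Thomas space $\boldsymbol R_h$ sits inside $\boldsymbol Q_h^{k,RT}$, the normal component of any element of $\boldsymbol R_h$ restricts to $\mathcal{P}_k(e)=\hat Q(e)$ on every face, and ${\rm div}_h\boldsymbol Q_h^{k,RT}=V_h^k$. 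Together with the (standard) consistency of the WG discretization, the uniform inf-sup condition and C\'ea's lemma then yield
\[
\|\boldsymbol p - \boldsymbol{\tilde p}_h\|_{\widetilde{\rm div},h,\rho} + \|u-u_h\|
\lesssim \inf_{\boldsymbol{\tilde q}_h\in \boldsymbol{\tilde Q}_h,\,v_h\in V_h}\Big(\|\boldsymbol p - \boldsymbol{\tilde q}_h\|_{\widetilde{\rm div},h,\rho} + \|u-v_h\|\Big),
\]
uniformly in $\rho$ and $h$.

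Next I would exhibit a good approximation. Let $\pi_h^{\rm div}$ denote the canonical Raviart--Thomas interpolation operator and $Q_h$ the elementwise $L^2$ projection onto $V_h^k$. I would take $\boldsymbol q_h=\pi_h^{\rm div}\boldsymbol p$, $\hat q_h = (\pi_h^{\rm div}\boldsymbol p)\cdot \boldsymbol n_e$, and $v_h=Q_h u$. The decisive observation is that, since $\pi_h^{\rm div}\boldsymbol p\in H({\rm div},\Omega)$ has single-valued normal trace, the identity $\boldsymbol{\hat q}_h\cdot\boldsymbol n_K = \boldsymbol q_h\cdot \boldsymbol n_K$ holds on every $\partial K$, so the stabilization contribution
\[
\rho^{-1}h_K^{-1}\bigl\langle (\boldsymbol q_h-\boldsymbol{\hat q}_h)\cdot\boldsymbol n_K,\;(\boldsymbol q_h-\boldsymbol{\hat q}_h)\cdot\boldsymbol n_K\bigr\rangle_{\partial K}
\]
vanishes identically when evaluated on the approximation error. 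Consequently $\|\boldsymbol p - \boldsymbol{\tilde q}_h\|_{\widetilde{\rm div},h,\rho}$ collapses to the standard $L^2$-plus-divergence norm of $\boldsymbol p - \pi_h^{\rm div}\boldsymbol p$, and I would close the argument by invoking the well-known Raviart--Thomas estimates $\|\boldsymbol p-\pi_h^{\rm div}\boldsymbol p\|\lesssim h^{k+1}|\boldsymbol p|_{k+1}$ and $\|{\rm div}(\boldsymbol p-\pi_h^{\rm div}\boldsymbol p)\|\lesssim h^{k+1}|{\rm div}\boldsymbol p|_{k+1}$, together with $\|u-Q_h u\|\lesssim h^{k+1}|u|_{k+1}$.

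The main conceptual obstacle is precisely the $\rho^{-1}$ factor weighting the stabilization term in $\|\cdot\|_{\widetilde{\rm div},h,\rho}$: a generic interpolant would leave an $O(\rho^{-1/2})$ residual there and destroy the $\rho$-uniformity of the final bound. What rescues the estimate is the $H({\rm div})$-conformity of $\pi_h^{\rm div}\boldsymbol p$ coupled with the alignment $\hat q_h = \boldsymbol q_h\cdot \boldsymbol n_e$, which annihilates the dangerous weighted term exactly; beyond this the remaining ingredients are routine and essentially parallel to those used in the proof of Theorem \ref{Error:uniform_HDG:mixed:rate}.
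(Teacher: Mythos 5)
Your proposal is correct and follows essentially the same route as the paper: the uniform inf-sup result (part 2 of Theorem \ref{wellposed:Aw:graddiv}) plus Theorem \ref{uniform:error:Au:th} gives the quasi-optimality bound, and the Raviart--Thomas interpolant with $\boldsymbol{\hat q}_h=(\boldsymbol q_h\cdot\boldsymbol n)\boldsymbol n$ kills the $\rho^{-1}$-weighted stabilization term exactly, leaving only the standard RT and $L^2$-projection approximation estimates. Your explicit verification of the structural hypotheses of part 2 and your remark on why a generic interpolant would fail are welcome additions that the paper leaves implicit.
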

\begin{proof}
From part $2$ of Theorem \ref{wellposed:Aw:graddiv} and Theorem \ref{uniform:error:Au:th}, we have 
\begin{equation}
\|\boldsymbol p-\boldsymbol{\tilde p}_h\|_{{{\rm div}, h,\rho}}+\|u-u_h\| \lesssim \inf\limits_{\boldsymbol{\tilde q}_h\in \boldsymbol{\tilde Q}_h, v_h\in V_h } \Big(\|\boldsymbol p-\boldsymbol{\tilde q}_h\|_{{{\rm div}, h,\rho}}+\|u-v_h\|\Big).
\end{equation}
We need to estimate:
\begin{equation}
 \inf\limits_{\boldsymbol{\tilde q}_h\in \boldsymbol{\tilde Q}_h, v_h\in V_h } \Big(\|\boldsymbol p-\boldsymbol{\tilde q}_h\|_{{{\rm div}, h,\rho}}+\|u-v_h\|\Big).
\end{equation}
Now we choose $\boldsymbol q_h=\pi^{\rm div}_h\boldsymbol p\in \boldsymbol Q_h$, where $\pi^{\rm div}_h$ is the interpolation of $\boldsymbol p$ into the $H({\rm div})$-conforming $RT$ finite element space, namely $\boldsymbol q_h\in \boldsymbol Q_h\cap H({\rm div}, \Omega)$. Since $\boldsymbol q_h\cdot \boldsymbol n$ is single-valued, we can choose $\boldsymbol {\hat q}_h=(\boldsymbol q_h\cdot \boldsymbol n) \boldsymbol n$, then by the approximation property of the $RT$ finite element space, we obtain
\begin{equation}\label{Error:uniform_WG:mixed:p}
\begin{split}
\|\boldsymbol p-\boldsymbol{\tilde q}_h\|_{{\rm div}, h,\rho}=&(c (\boldsymbol{p}-  \boldsymbol{q}_h), \boldsymbol{p}- \boldsymbol{q}_h)_{\mathcal T_h}+( {\rm div} (\boldsymbol{p}-\boldsymbol{q}_h), {\rm div}( \boldsymbol{p}- \boldsymbol{q}_h))_{\mathcal T_h}\\
&+\sum\limits_{K\in \mathcal T_h}\rho^{-1}h_K^{-1}\|(\boldsymbol p-\boldsymbol p_h-(\boldsymbol p-\boldsymbol {\hat p_h}))\cdot \boldsymbol n_K\|^2_{0,\partial K}\\
= &(c ( \boldsymbol{p}-  \boldsymbol{q}_h), \boldsymbol{p}- \boldsymbol{q}_h)_{\mathcal T_h}+( {\rm div} (\boldsymbol{p}-\boldsymbol{q}_h), {\rm div}( \boldsymbol{p}- \boldsymbol{q}_h))_{\mathcal T_h}\\
\lesssim & h^{2k+2}(|\boldsymbol p |^2_{k+1}+|{\rm div}\boldsymbol p|^2_{k+1}).
\end{split}
\end{equation}
Next, we choose $v_h=Q_h(u)$, where $Q_h$ is the $L^2$projection from $L^2(\Omega)$ to $V_h$, and 
we immediately have:
\begin{equation}\label{Error:uniform_WG:mixed:u}
\|u-{v}_h\|\lesssim h^{k+1}|u|_{k+1}.
\end{equation}  
Combining \eqref{Error:uniform_WG:mixed:p} and \eqref{Error:uniform_WG:mixed:u}, we get the desired result.
\end{proof}

 \begin{remark}
    
We must point out that the error estimates obtained here are uniform with respect to the parameter $\rho$. Namely, all 
the constants $C_{r,1},C_{r,2},C_{r,3}$, and $C_{r,4}$ are independent of $\rho$. We figure out the following 
Table \ref{WG:ModiHDG:convergence}.
\end{remark}
%\begin{sidewaystable}[!htbp]
\begin{table}[!htbp]
%\begin{center}
\centering
\begin{tabular}{|c|c|c|c|c|}
 \hline
FEM
& norms  & $\boldsymbol{{U}_h}$ &parameter $\theta$&order\\
  \hline
\multirow{2}{*}{ HDG }
&
 $
\begin{array}{l}\displaystyle
\|\boldsymbol {p}_h\|_{{\rm div},\rho,h}~ \|\tilde u_h\|_{0,\rho,h}~\eqref{HDG:div:norm}
\end{array}
$  
 &$V_h^k\times \boldsymbol Q_h^{k,RT}\times {\hat V}_h^k$ & $\theta=\tau=\rho h_K$&$k+1$
\\
\cline{2-5}
&
$
\begin{array}{l}\displaystyle
\|\boldsymbol {p}_h\|~ \|\tilde u_h\|_{\tilde 1,\rho,h}~\eqref{HDG:grad:norm}

\end{array}
$  
 &$V_h^{k+1}\times \boldsymbol Q_h^k\times {\hat V}_h^k$ & $\theta=\tau=\rho^{-1} h^{-1}_K$&$k+1$
 \\ 
\hline
\multirow{2}{*}{ WG }
& 
$
\begin{array}{l}\displaystyle
\|\boldsymbol{\tilde p}_h\|_{0,h,\rho}~\|u_h\|_{1,h,\rho}~ \eqref{WG:grad:norm} 
\end{array}
$
&$V_h^{k+1}\times \boldsymbol Q_h^k\times {\hat Q}_h^k$ & $\theta=\eta=\rho h_K$&$k+1$
\\
\cline{2-5}
& 
$
\begin{array}{l}\displaystyle
\|\boldsymbol{\tilde p}_h\|_{\widetilde{\rm div},h,\rho}~\|u_h\|~\eqref{WG:div:norm}

\end{array}
$  
&$V_h^k\times \boldsymbol Q_h^{k,RT}\times  {\hat Q}_h^{k}$& $\theta=\eta=\rho^{-1} h^{-1}_K$&$k+1$
\\
\hline
\end{tabular}
%\end{center}
\caption{Convergence of HDG and WG}
\label{WG:ModiHDG:convergence}
\end{table}
%\end{sidewaystable}

%\begin{remark}
%From Table \ref{WG:ModiHDG:convergence}, we see that the optimal choices of the spaces such that the convergence rate is optimal
% do not satisfy the conditions for WG and HDG to be equivalent. 
%\end{remark}
%%%%%%%%%%%%%%%%%%%%%%%%%%%%%%%%%%%%%%%%%%%%%%%%%% 
%% Relationship between different methods 
%%%%%%%%%%%%%%%%%%%%%%%%%%%%%%%%%%%%%%%%%%%%%%%%%%
\section{Relationships between HDG, primal conforming methods and between WG, mixed conforming method}\label{relationship}

In this section, as an application of the uniform stability results, we shall discuss the relationships between HDG and primal conforming methods and the relationship between WG and mixed conforming method.
%% Limiting WG --> Mixed
The proof for the results of this section are also shown in \cite{qingguounified}. For the convenience of reading and the self-consistency of the paper, we show the proof here again. Further, the numerical results verifying the results of this section can be found in \cite{qingguounified}. 
\subsection{Primal conforming methods as the limiting case of HDG methods}
For a given mesh, consider the $H^1$-conforming subspace $ V^c_h=V_h\cap
H^1_0(\Omega)\subset V_h$, then the primal conforming methods in the
variational form are written as: Find  $(u^c_h, \boldsymbol p^c_h)\in
V^c_h\times \boldsymbol Q_h$ such that  
\begin{equation} \label{primalmethod}
\left\{
\begin{aligned}
(c\boldsymbol p^c_h, \boldsymbol q_h)_{\mathcal T_h}+ (\nabla
u^c_h,\boldsymbol q_h)_{\mathcal T_h} &= (\boldsymbol g_1, \boldsymbol
  q_h)_{\mathcal T_h} + \langle g_2, \boldsymbol q_h\cdot
\vect{n}\rangle_{\partial \mathcal T_h}\qquad
\forall \boldsymbol q_h\in \boldsymbol Q_h,\\
-(\boldsymbol p^c_h, \nabla v^c_h)_{\mathcal T_h} &=
(f,v^c_h)_{\mathcal T_h}\qquad\qquad\qquad\qquad\qquad \forall
v^c_h\in V^c_h,
\end{aligned}
\right.
\end{equation}
where $\boldsymbol g_1=0$ and $g_2 = 0$ when applied to the Poisson
equation \eqref{H1}.

We try to prove that the HDG methods
\eqref{Stabilizedpweak} with the stabilization parameter
$\tau = \rho^{-1}h_K^{-1}$ 
converge to primal conforming methods \eqref{primalmethod}
when $\rho \to 0$. 

 First, by $\nabla  V^c_h\subset \nabla_h
V_h\subset \boldsymbol Q_h$, the well-posedness of the primal conforming methods
(cf.  \cite{brenner2007mathematical}) implies that  
\begin{equation} \label{primalstability}
 \|\boldsymbol{p}^c_h\| + \|{u}^c_h\|_{1}  \leq C_p\left( \|f\|_{-1,h}
     + \sup_{\boldsymbol q_h \in \boldsymbol Q_h} \frac{(\boldsymbol
       g_1, \boldsymbol q_h)_{\mathcal T_h} + \langle g_2, \boldsymbol
     q_h\cdot \boldsymbol n\rangle_{\partial \mathcal
     T_h}}{\|\boldsymbol q_h\|} \right),
\end{equation}
where $\|f\|_{-1,h}=\sup\limits_{v^c_h\neq 0,v^c_h\in
V_h^c}\frac{(f,v^c_h)_{\mathcal T_h}}{\|{v}^c_h\|_{1}}$.

Recall that the space define on $\mathcal E_h$ (see
\eqref{Edge:spaces}) of HDG methods is given by  
$$ 
\hat{V}_h = \{\hat{v}_h: \hat{v}_h|_e\in \hat{V}(e), \forall e
\in \mathcal E_h^i, \hat{v}_h|_{\mathcal E_h^\partial} = 0\}.
$$ 
We make the following assumption on the finite element spaces of stabilized hybrid mixed
methods. 

\begin{assumption} \label{Ass:HDG:Primal} 
Assume that the spaces $\boldsymbol{Q}_h, V_h$ and $\hat V_h$
satisfy
\begin{enumerate}
\item $\nabla_h V_h \subset \boldsymbol{Q}_h$;
\item $\{V_h\}|_e \subset \hat V(e)$, ~$\forall e \in \mathcal E_h^i$;   
\item There exists a constant $C_p^I$ independent of $h$, such that
for any $u_h\in V_h$, 
\begin{equation} \label{jump}
\inf_{u_h^I \in V_h^c} \left( \|(u_h^I - u_h^c\| + \|\nabla_h
      (u_h^I-u_h^c)\| \right) \leq C_p^I
\sum_{e\in \mathcal{E}_h} h^{-1/2}_e \|\llbracket u_h \rrbracket
\|_{0,e},
\end{equation}
where $V_h^c = V_h \cap H^1(\Omega)$.
\end{enumerate}
\end{assumption}

We note that the first assumption in Assumption \ref{Ass:HDG:Primal}
ensures the well-posedness of the primal conforming methods \eqref{primalmethod}.
The following example satisfies Assumption \ref{Ass:HDG:Primal}
(see the conforming relatives in \cite{brenner2005c,
brenner2007mathematical}).

\begin{example} \label{Example:HDG:primal}
$\boldsymbol{Q}_h = \boldsymbol{Q}_h^{k}, V_h= V_h^{k+1},
  \hat{V}(e) = \mathcal{P}_{k+1}(e)$, for $k \geq 0$. 
\end{example}

For any given $\tau=\rho^{-1}h_K^{-1}$, we rewrite the HDG methods \eqref{Stabilizedpweak} in the
variational form as: Find $(\boldsymbol{p}_h^{\tau},u_h^{\tau}, \hat
u_h^{\tau})\in \boldsymbol Q_h\times V_h\times \hat{V}_h$ such
that for any $(\boldsymbol q_h, v_h, \hat{v}_h) \in \boldsymbol{Q}_h
\times V_h \times \hat{V}_h$
\begin{equation} \label{newHDG}
\left\{
\begin{aligned}
(c \boldsymbol{p}_h^{\tau}, \boldsymbol{q}_h)_{\mathcal T_h} -
(u_h^\tau, {\rm div}\boldsymbol{q}_h)_{\mathcal T_h} + \langle
\hat{u}_h^\tau ,\boldsymbol q_h \cdot \vect{n}\rangle_{\partial
  \mathcal T_h} &= 0, \\
({\rm div}\boldsymbol p_h^\tau, v_h)_{\mathcal T_h} - \langle
\boldsymbol{p}_h^\tau \cdot \vect{n}, \hat{v}_h\rangle_{\partial \mathcal
  T_h} + \rho^{-1} \langle h_K^{-1} (u_h^\tau - \hat{u}_h^\tau), v_h -
\hat{v}_h \rangle_{\partial \mathcal T_h} &= (f, v_h)_{\mathcal
T_h}.
\end{aligned}
\right.
\end{equation}

\begin{theorem} \label{thm:HDG-primal}
Under the Assumption \ref{Ass:HDG:Primal}, the HDG methods
\eqref{Stabilizedpweak} with $\tau = \rho^{-1}h_K^{-1}$ converge to the primal conforming methods
\eqref{primalmethod} as $\rho \to 0$.
More precisely, we have  
\begin{equation} \label{equ:HDG2primal}
\|\boldsymbol{p}^{\tau}_h-\boldsymbol{p}^{c}_h\|+\|u^{\tau}_h-u_h^c\|_{1,h}
\leq C_{d,3}\rho^{1/2}\|f\|_{-\tilde 1,\rho, h},
\end{equation}
where $C_{d,3}$ is independent of both mesh size $h$ and $\rho$, and
$\|f\|_{-\tilde 1,\rho,h} = \sup\limits_{\tilde{v}_h \in \tilde{V}_h} \frac{(f,
v_h)_{\mathcal{T}_h}}{\|\tilde{v}_h\|_{\tilde{1},\rho,h}}$.
\end{theorem}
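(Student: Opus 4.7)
The plan is to apply the uniform inf-sup condition for $A_h$ (part \ref{wellposed:Ah:grad} of Theorem \ref{wellposed:Ah:divgrad}) to the error pair $(\boldsymbol p_h^\tau - \boldsymbol p_h^c,\, \tilde u_h^\tau - \tilde u_h^c)$, after lifting the primal solution into $\boldsymbol Q_h \times \tilde V_h$ by setting $\tilde u_h^c := (u_h^c,\, u_h^c|_{\mathcal E_h})$. This lift is well-defined because $u_h^c$ is continuous, its edge traces are single-valued and lie in $\hat V(e)$ by Assumption \ref{Ass:HDG:Primal}(2), and they vanish on $\mathcal E_h^\partial$ since $u_h^c \in H^1_0(\Omega)$.

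First I would compute the consistency residual $A_h((\boldsymbol p_h^c, \tilde u_h^c),(\boldsymbol q_h, \tilde v_h))$. Element-wise integration by parts combined with $u_h^c = \hat u_h^c$ on $\partial K$ reduces $b_h(\boldsymbol q_h, \tilde u_h^c)$ to $(\nabla u_h^c, \boldsymbol q_h)_{\mathcal T_h}$ and makes $c_h$ vanish. The primal first equation, combined with $\nabla V_h^c \subset \boldsymbol Q_h$ (Assumption \ref{Ass:HDG:Primal}(1)), then annihilates the $\boldsymbol q_h$-contribution. Splitting $v_h = v_h^I + (v_h - v_h^I)$ with $v_h^I \in V_h^c$ the conforming enrichment from Assumption \ref{Ass:HDG:Primal}(3) and using the primal second equation against $v_h^I$, I obtain the error equation
\begin{equation*}
A_h\big((\boldsymbol p_h^\tau - \boldsymbol p_h^c,\, \tilde u_h^\tau - \tilde u_h^c),\,(\boldsymbol q_h, \tilde v_h)\big) = -(f, v_h - v_h^I)_{\mathcal T_h} - (\nabla_h(v_h - v_h^I), \boldsymbol p_h^c)_{\mathcal T_h} + \langle v_h - \hat v_h,\, \boldsymbol p_h^c \cdot \boldsymbol n\rangle_{\partial \mathcal T_h}.
\end{equation*}

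The next step is to bound each residual term by $\rho^{1/2}\|f\|_{-\tilde 1, \rho, h}$ times the test-pair norm. The key preliminary estimate is that edge jumps of $v_h$ are small: from the identity $[v_h]_e = (v_h^+ - \hat v_h) - (v_h^- - \hat v_h)$ on interior edges and $\hat v_h|_{\mathcal E_h^\partial} = 0$ together with the definition of $\|\cdot\|_{\tilde 1, \rho, h}$,
\begin{equation*}
\sum_{e \in \mathcal E_h} h_e^{-1}\|[v_h]\|_{0,e}^2 \lesssim \sum_{K \in \mathcal T_h} h_K^{-1}\|v_h - \hat v_h\|_{0, \partial K}^2 \leq \rho\|\tilde v_h\|_{\tilde 1, \rho, h}^2.
\end{equation*}
Combined with Assumption \ref{Ass:HDG:Primal}(3) this gives $\|v_h - v_h^I\| + \|\nabla_h(v_h - v_h^I)\| \lesssim \rho^{1/2}\|\tilde v_h\|_{\tilde 1, \rho, h}$. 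The second residual term is then bounded by $\|\boldsymbol p_h^c\|\cdot\rho^{1/2}\|\tilde v_h\|_{\tilde 1, \rho, h}$ via Cauchy-Schwarz, and the third by Cauchy-Schwarz with weights $h_K^{\pm 1/2}$ together with the polynomial trace bound $\sum_K h_K\|\boldsymbol p_h^c\cdot\boldsymbol n\|_{0,\partial K}^2 \lesssim \|\boldsymbol p_h^c\|^2$, yielding the same order.

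The hardest part is bounding $(f, v_h - v_h^I)$ in the dual norm $\|f\|_{-\tilde 1, \rho, h}$ rather than in $\|f\|_{L^2}$, since a naive Cauchy-Schwarz would cost the stronger $L^2$ norm of $f$. My approach would be to construct a lift of $v_h - v_h^I$ into $\tilde V_h$ (for instance $\tilde\phi := (v_h - v_h^I, \phi_e)$ with $\phi_e = \{v_h\} - v_h^I|_e$ on interior edges and $\phi_e = 0$ on boundary edges), dualize as $(f, v_h - v_h^I) \leq \|f\|_{-\tilde 1, \rho, h}\|\tilde\phi\|_{\tilde 1, \rho, h}$, and exploit $\phi_K - \phi_e = \pm\tfrac{1}{2}[v_h]_e$ on interior edges to reduce the stabilization contribution to a jump sum already controlled by $\rho\|\tilde v_h\|_{\tilde 1, \rho, h}^2$; this is also where the sharper local form of the Scott--Zhang bound implicit in Assumption \ref{Ass:HDG:Primal}(3) is needed to dispose of the boundary-edge contributions. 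Finally, primal stability \eqref{primalstability} gives $\|\boldsymbol p_h^c\| \lesssim \|f\|_{-1, h} \leq \|f\|_{-\tilde 1, \rho, h}$ (using the isometric embedding $V_h^c \hookrightarrow \tilde V_h$ via $v_h^c \mapsto (v_h^c, v_h^c|_{\mathcal E_h})$, for which $\|\cdot\|_{\tilde 1, \rho, h} = \|\nabla\cdot\|$), and combining all three bounds yields $\|\boldsymbol p_h^\tau - \boldsymbol p_h^c\| + \|\tilde u_h^\tau - \tilde u_h^c\|_{\tilde 1, \rho, h} \lesssim \rho^{1/2}\|f\|_{-\tilde 1, \rho, h}$, which dominates the left-hand side of \eqref{equ:HDG2primal}.
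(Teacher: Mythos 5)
Your overall strategy --- perturbing the HDG system around a lift of the conforming solution and invoking the uniform inf-sup of $A_h$ --- is not the paper's route, and it breaks down at exactly the step you flag as hardest. The error equation you derive is correct, and the terms $(\nabla_h(v_h-v_h^I),\boldsymbol p_h^c)_{\mathcal T_h}$ and $\langle v_h-\hat v_h,\boldsymbol p_h^c\cdot\boldsymbol n\rangle_{\partial\mathcal T_h}$ are indeed of order $\rho^{1/2}\|\boldsymbol p_h^c\|\,\|\tilde v_h\|_{\tilde 1,\rho,h}$. But $(f,v_h-v_h^I)_{\mathcal T_h}$ is not. With your lift $\tilde\phi=(v_h-v_h^I,\phi_e)$, the stabilization part of $\|\tilde\phi\|_{\tilde 1,\rho,h}^2$ carries the weight $\rho^{-1}h_K^{-1}$, so even though $\phi_K-\phi_e=\pm\tfrac12[v_h]$ and $\sum_e h_e^{-1}\|[v_h]\|_{0,e}^2\lesssim\rho\|\tilde v_h\|_{\tilde 1,\rho,h}^2$, you only obtain $\rho^{-1}\sum_K h_K^{-1}\|\phi_K-\phi_e\|_{0,\partial K}^2\lesssim\rho^{-1}\cdot\rho\,\|\tilde v_h\|^2_{\tilde 1,\rho,h}=\|\tilde v_h\|_{\tilde 1,\rho,h}^2$: the $\rho^{-1}$ in the norm exactly cancels the $O(\rho)$ smallness of the jumps. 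Hence $\|\tilde\phi\|_{\tilde 1,\rho,h}\lesssim\|\tilde v_h\|_{\tilde 1,\rho,h}$ with no gain, the dualization gives only $(f,v_h-v_h^I)\lesssim\|f\|_{-\tilde 1,\rho,h}\|\tilde v_h\|_{\tilde 1,\rho,h}$, and the inf-sup argument delivers an $O(1)$ bound rather than the claimed $O(\rho^{1/2})$. This is structural, not a matter of a sharper trace or Scott--Zhang bound: the residual in your framework measures the nonconformity of the \emph{test} function, which relative to $\|\tilde v_h\|_{\tilde 1,\rho,h}$ is only $O(1)$ (take $v_h$ piecewise constant with checkerboard sign and $\hat v_h=0$). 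Note also that your target inequality is strictly stronger than \eqref{equ:HDG2primal}: bounding $\|\tilde u_h^\tau-\tilde u_h^c\|_{\tilde 1,\rho,h}$ by $\rho^{1/2}\|f\|_{-\tilde 1,\rho,h}$ would require $\sum_K h_K^{-1}\|u_h^\tau-\hat u_h^\tau\|_{0,\partial K}^2\lesssim\rho^2\|f\|^2$, whereas the energy identity and Corollary \ref{uniform_stable} only give $O(\rho)$.

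The paper's proof avoids this by never pairing $f$ against a nonconforming test function. Using Assumption \ref{Ass:HDG:Primal}(2) it first tests the HDG scheme with conforming $v_h=\hat v_h=v_h^c$, which kills the stabilization and recovers the primal second equation exactly, $-(\boldsymbol p_h^\tau,\nabla v_h^c)_{\mathcal T_h}=(f,v_h^c)_{\mathcal T_h}$. The error analysis is then run through the \emph{primal conforming} stability estimate \eqref{primalstability} applied to $(\boldsymbol p_h^\tau-\boldsymbol p_h^c,\,u_h^I-u_h^c)$, so the residual consists only of $\langle u_h^\tau-\hat u_h^\tau,\boldsymbol q_h\cdot\boldsymbol n\rangle_{\partial\mathcal T_h}$ and $(\nabla u_h^I-\nabla_h u_h^\tau,\boldsymbol q_h)_{\mathcal T_h}$ --- quantities measuring the nonconformity of the \emph{discrete solution}, which is genuinely $O(\rho^{1/2})$ because $\langle h^{-1}(u_h^\tau-\hat u_h^\tau),u_h^\tau-\hat u_h^\tau\rangle_{\partial\mathcal T_h}\le\rho\|\tilde u_h^\tau\|^2_{\tilde 1,\rho,h}\lesssim\rho\|f\|^2_{-\tilde 1,\rho,h}$ by Corollary \ref{uniform_stable}. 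To repair your argument you would have to move the solution's smallness into the residual, e.g.\ by comparing $(\boldsymbol p_h^\tau,u_h^\tau)$ to $(\boldsymbol p_h^c,u_h^I)$ with $u_h^I$ chosen from $u_h^\tau$ via Assumption \ref{Ass:HDG:Primal}(3) --- which is essentially what the paper does.
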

\begin{proof}
From the assumption $\{V_h\}|_e \subset \hat V(e)$, by taking $v_h
= v_h^c$ and $\hat{v}_h|_e = v_h^c|_e$ in \eqref{newHDG} and
integrating by parts, we see that 
\begin{equation} \label{HDG62}
-(\boldsymbol{p}_h^{\tau},\nabla v_h^c)_{\mathcal T_h} =
(f,v_h^c)_{\mathcal T_h} \qquad  \forall  v_h^c \in V_h^c.
\end{equation}
Subtracting \eqref{primalmethod} from the first equation of
\eqref{newHDG} and \eqref{HDG62}, we have 
\begin{equation}\label{eq:error}
\left\{
\begin{aligned}
(c(\boldsymbol p_h^{\tau}-\boldsymbol p^c_h),\boldsymbol
q_h)_{\mathcal T_h} + (\nabla u_h^{\tau} - \nabla u^c_h,\boldsymbol
  q_h)_{\mathcal T_h} &= \langle u_h^{\tau}-\hat u_h^{\tau},\boldsymbol
q_h\cdot \boldsymbol n\rangle_{\partial {\mathcal T_h}} \qquad
  \forall \boldsymbol q_h\in \boldsymbol Q_h, \\
-(\boldsymbol p_h^{\tau}-\boldsymbol p_h^c,\nabla v_h^c)_{\mathcal T_h}
&= 0  \qquad\qquad\qquad\qquad \qquad\forall v_h^c\in V_h^c.
\end{aligned}
\right.
\end{equation}
Again, for any $u_h^I \in V_h^c$, we have 
\begin{equation}\label{error}
\left\{
\begin{aligned}
(c(\boldsymbol p_h^{\tau}-\boldsymbol p^c_h), \boldsymbol
 q_h)_{\mathcal T_h} + (\nabla u_h^{I}-\nabla u^c_h,\boldsymbol
   q_h)_{\mathcal T_h} &= \langle u_h^{\tau}-\hat u_h^{\tau},\boldsymbol
 q_h\cdot \boldsymbol n\rangle_{\partial {\mathcal T_h}} + (\nabla
u_h^I - \nabla u_h^{\tau},\boldsymbol q_h)_{\mathcal T_h} \qquad
 \forall \boldsymbol q_h \in \boldsymbol Q_h, \\
-(\boldsymbol p_h^{\tau}-\boldsymbol p_h^c,\nabla v_h^c)_{\mathcal
  T_h} &= 0 \qquad \qquad \qquad \qquad \qquad \qquad\qquad \qquad
  ~~\quad\qquad \forall v_h^c \in V_h^c.
\end{aligned}
\right.
\end{equation}
Because $\boldsymbol p_h^{\tau}-\boldsymbol p^c_h\in \boldsymbol
Q_h$ and $v_h^{c}-u^c_h\in V_h^c$, using \eqref{primalstability},
trace inequality, inverse inequality and Cauchy inequality, we obtain
\begin{equation}\label{error1}
\begin{aligned}
\|\boldsymbol p_h^{\tau}-\boldsymbol p_h^{c}\|+\|u_h^I-u_h^c\|_1 &\leq
C_p\sup_{\boldsymbol q_h\in \boldsymbol Q_h}\frac{\langle
u_h^{\tau}-\hat u_h^{\tau},\boldsymbol q_h\cdot \boldsymbol
n\rangle_{\partial {\mathcal T_h}} + (\nabla u_h^{I}-\nabla
u^\tau_h,\boldsymbol q_h)_{\mathcal T_h}}{\|\boldsymbol q_h\|} \\
& \lesssim |u_h^I - u_h^\tau|_{1,h} + \langle h^{-1}(u_h^\tau -
\hat{u}_h^\tau), u_h^\tau - \hat{u}_h^\tau \rangle_{\partial \mathcal
T_h}^{1/2}. 
\end{aligned}
\end{equation}
Noting that $\{V_h\}|_e
\subset \hat{V}(e)$, and 
\begin{equation}\label{identity}
 \langle u_h^{\tau}-\hat{u}_h^{\tau},
 u_h^{\tau}-\hat{u}_h^{\tau}\rangle_{\partial {\mathcal T_h}}
 =2\langle\{u_h^{\tau}-\hat{u}_h^{\tau}\},
 \{u_h^{\tau}-\hat{u}_h^{\tau}\}\rangle_{\mathcal{E}_h}+
 \frac{1}{2}\langle \lbrack\!\lbrack u_h^{\tau}\rbrack\!\rbrack,
 \lbrack\!\lbrack u_h^{\tau} \rbrack\!\rbrack\rangle_{\mathcal{E}_h}.
\end{equation}
Therefore, Assumption \ref{Ass:HDG:Primal}, \eqref{error1}, and
\eqref{identity} imply that 
$$ 
\begin{aligned}
\|\boldsymbol p_h^{\tau}-\boldsymbol p_h^{c}\| +
\|u_h^{\tau}-u_h^c\|_{1,h} & \leq \inf_{u_h^I \in V_h^c} 
\left( \|\boldsymbol p_h^\tau - \boldsymbol p_h^c\| + \|u_h^I -
u_h^c\|_{1} + \|u_h^\tau - u_h^I\|_{1,h} \right) \\
& \lesssim  \langle h^{-1}(u_h^\tau - \hat{u}_h^\tau), u_h^\tau -
\hat{u}_h^\tau \rangle_{\partial \mathcal T_h}^{1/2} +  \inf_{u_h^I \in
  V_h^c} \|u_h^\tau - u_h^I\|_{1,h} \\
& \lesssim  \langle h^{-1}(u_h^\tau - \hat{u}_h^\tau), u_h^\tau -
\hat{u}_h^\tau \rangle_{\partial \mathcal T_h}^{1/2} + \sum_{e\in \mathcal
  E_h} h_e^{-1/2} \|[u_h]\|_{0,e} \\
& \lesssim \langle h^{-1}(u_h^\tau - \hat{u}_h^\tau), u_h^\tau -
\hat{u}_h^\tau \rangle_{\partial \mathcal T_h}^{1/2} \\
& \lesssim \rho^{1/2}\|f\|_{-\tilde 1, \rho, h},
\end{aligned}
$$
where Corollary \ref{uniform_stable} was used in the last step. 
\end{proof}

\begin{remark}
%We have $\hat{P}_h$ as an identity operator in the definition of
%$\|\cdot\|_{\tilde{1},\rho, h}$ (see \eqref{equ:HDG-norms-grad}).  therefore,
From the definition of
$\|\cdot\|_{\tilde{1},\rho, h}$, when $\rho \lesssim 1$, we have  
$$ 
\begin{aligned}
\inf_{\hat{v}_h \in \hat{V}_h} \|\tilde{v}_h\|^2_{\tilde{1},
  \rho, h} &= \inf_{\hat{v}_h \in \hat{V}_h} (\nabla v_h, \nabla
v_h)_{\mathcal T_h} + \sum_{K\in \mathcal T_h} \rho^{-1} h_K^{-1}
\langle v_h - \hat{v}_h, v_h - \hat{v}_h \rangle_{\partial K} \\
& \simeq (\nabla v_h, \nabla v_h)_{\mathcal T_h} + \rho^{-1} 
\sum_{e\in \mathcal E_h}h_e^{-1} \|\llbracket v_h \rrbracket\|_{0,e}^2
\gtrsim \|v_h\|_{1,h}. 
\end{aligned}
$$ 
Hence, when $\rho \lesssim 1$,  
$$ 
\|f\|_{-\tilde 1,\rho,h} = \sup_{\tilde{v}_h \in \tilde{V}_h} \frac{(f,
    v_h)_{\mathcal T_h}}{\|\tilde{v}_h\|_{\tilde{1},\rho, h}} = 
\sup_{v_h \in V_h} \frac{(f, v_h)_{\mathcal
  T_h}}{\inf_{\hat{v}_h \in
    \hat{V}_h}\|\tilde{v}_h\|_{\tilde{1},\rho, h}} \lesssim
    \sup_{v_h \in V_h} \frac{(f, v_h)_{\mathcal T_h}}{\|v_h\|_{1,h}}
    \lesssim \|f\|, 
$$ 
which means that the solutions of HDG methods converge to those of
primal conforming methods with order $\rho^{1/2}$ at least.
\end{remark}

\subsection{Mixed conforming methods as the limiting case of WG methods}
For a given mesh, consider the $\boldsymbol H({\rm div})$-conforming subspace
$\boldsymbol Q^c_h := \boldsymbol Q_h\cap \boldsymbol H({\rm div},
\Omega)\subset \boldsymbol Q_h$, the mixed conforming methods in variational form are written as: Find
$(\boldsymbol p^c_h, u^c_h) \in \boldsymbol Q^c_h \times V_h$ such
that  
\begin{equation}\label{mixedmethod}
\left\{
\begin{aligned}
(c\boldsymbol p^c_h,\boldsymbol q^c_h)_{\mathcal T_h} - (u^c_h,{\rm
div} \boldsymbol q^c_h)_{\mathcal T_h} &= (g_1, \boldsymbol
  q_h^c)_{\mathcal T_h}
~\qquad \qquad \qquad \quad \forall \boldsymbol q^c_h\in \boldsymbol Q^c_h,\\
 ({\rm div} \boldsymbol p^c_h, v_h)_{\mathcal T_h}&= (f,v_h)_{\mathcal
   T_h} + \langle g_2, v_h \rangle_{\partial \mathcal T_h} \qquad
   \forall v_h\in V_h,
\end{aligned}
\right.
\end{equation}
where $g_1 = 0$ and $g_2 = 0$ when applied to the Poisson equation
\eqref{pois}.

We will now try to prove that WG methods
\eqref{Stabilizedpweak_WG} with $\eta =
\rho^{-1}h_K^{-1}$  converge to mixed conforming methods \eqref{mixedmethod} when
$\rho \to 0$.

First, by $V_h\subset {\rm div} \boldsymbol Q^c_h\subset
{\rm div}_h \boldsymbol Q_h\subset V_h$, the well-posedness of the
mixed conforming methods (cf. \cite{brezzi1991mixed, boffi2013mixed}) implies
that  
\begin{equation} \label{equ:well-posedness-mixed}
\|\boldsymbol p_h^c\|_{\boldsymbol H({\rm div})} + \|v_h^c\| \leq C_M
\left( 
\|f\| + \sup_{\boldsymbol q_h^c \in \boldsymbol Q_h^c} \frac{(g_1,
  \boldsymbol q_h^c)_{\mathcal T_h}}{\|\boldsymbol
q_h^c\|_{\boldsymbol H({\rm div})}} + \sup_{\boldsymbol v_h \in V_h}
\frac{\langle g_2, v_h \rangle_{\partial \mathcal T_h}}{\|v_h\|}
\right).
\end{equation}

Recall that the spaces defined on $\mathcal E_h$ (see
\eqref{Edge:spaces}) of WG methods are given by 
$$ 
\hat{\boldsymbol Q}_h = \{\hat{\boldsymbol p}_h: \hat{\boldsymbol
p}_h|_e \in \hat{Q}(e)\vect{n}_e, \forall e\in \mathcal{E}_h\},
  \qquad \hat{Q}_h = \{\hat{p}_h: \hat{p}_h|_{e}, e\in
  \hat{Q}(e), \forall e\in \mathcal{E}_h\}.
$$ 
We make the following assumption on the finite element spaces of WG
methods.

\begin{assumption}\label{Ass:WG:mixed}
Assume that the spaces $\boldsymbol{Q}_h$, $\hat{\boldsymbol Q}_h$
and $V_h$ satisfy
\begin{enumerate}
\item ${\rm div}_h \boldsymbol{Q}_h = V_h$;
\item $\{\!\!\{\boldsymbol{Q}_h\}\!\!\}|_e \subset \hat Q(e),
~\forall e \in \mathcal{E}_h$;   
\item There exists a constant $C_{M}^I$ independent of $h$, such that
for any $\boldsymbol{p}_h \in \boldsymbol{Q}_h$, 
\begin{equation}\label{approx_mixed}
\inf_{\boldsymbol{p}^{I}_h\in
\boldsymbol{Q}^{c}_h} (\|\boldsymbol{p}^{I}_h-\boldsymbol{p}_h\|+\|{\rm
div}_h(\boldsymbol{p}^{I}_h-\boldsymbol{p}_h)\|) \leq
C_{M}^I \sum_{e\in
\mathcal{E}^i_h} h_e^{-1/2}\|[\boldsymbol{p}_h]\|_{0,e},
\end{equation}
where $\boldsymbol Q^c_h = \boldsymbol Q_h \cap \boldsymbol H({\rm
div}; \Omega)$.
\end{enumerate}
\end{assumption}

We note that the first assumption in Assumption \ref{Ass:WG:mixed}
ensures well-posedness of the mixed conforming methods \eqref{mixedmethod}.
Several examples are given below.

\begin{example}\label{Example:WG:RT}
Raviart-Thomas type: $ \boldsymbol{Q}_h = \boldsymbol{Q}_h^{k, RT},
\hat Q(e) = \mathcal{P}_k(e), V_h = V_h^k$, for $k \geq 0$. 
\end{example}

\begin{example}\label{Example:WG:BDM}
Brezzi-Douglas-Marini type: $
\boldsymbol{Q}_h = \boldsymbol Q_h^{k+1}, \hat Q(e) =
\mathcal{P}_{k+1}(e), V_h = V_h^k$, for $k\geq 0$. 
\end{example}

\begin{lemma}
If we choose the spaces as in Example \ref{Example:WG:BDM} or Example
\ref{Example:WG:RT}, then Assumption \ref{Ass:WG:mixed} holds. 
\end{lemma}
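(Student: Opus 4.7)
The plan is to verify each of the three conditions in Assumption~\ref{Ass:WG:mixed} separately for both Example~\ref{Example:WG:RT} and Example~\ref{Example:WG:BDM}, with the third (approximation) condition being the only nontrivial one. The first two follow immediately from the standard characterizations of the RT and BDM finite element spaces. Specifically, for condition~(1), I would use the well-known surjectivity of the divergence: on each $K$, $\mathrm{div}\bigl(\boldsymbol{\mathcal P}_k(K)+\boldsymbol x P_k(K)\bigr)=\mathcal P_k(K)$ in the RT case, while $\mathrm{div}\boldsymbol{\mathcal P}_{k+1}(K)=\mathcal P_k(K)$ in the BDM case; together with $V_h=V_h^k$, this gives $\mathrm{div}_h\boldsymbol Q_h=V_h$ elementwise. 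For condition~(2), observe that the normal trace $\boldsymbol q_h\cdot\boldsymbol n|_e$ on any face $e$ lies in $\mathcal P_k(e)$ for RT and in $\mathcal P_{k+1}(e)$ for BDM, so the average $\{\!\!\{\boldsymbol q_h\}\!\!\}|_e$ lies in $\hat Q(e)$ in each case.

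For condition~(3), the plan is to construct $\boldsymbol p_h^I\in\boldsymbol Q_h^c$ by a standard averaging of the face degrees of freedom. Recall that both RT and BDM elements are uniquely determined by interior moments on each $K$ together with face moments of $\boldsymbol q\cdot\boldsymbol n_e$ against $\mathcal P_k(e)$ (resp.\ $\mathcal P_{k+1}(e)$) on each face $e\subset\partial K$. Given $\boldsymbol p_h\in\boldsymbol Q_h$, I define $\boldsymbol p_h^I$ by retaining the interior moments of $\boldsymbol p_h|_K$ and replacing the face moments on each interior face $e=\partial K^+\cap\partial K^-$ by the average $\tfrac12(\boldsymbol p_h^+\cdot\boldsymbol n_e+\boldsymbol p_h^-\cdot\boldsymbol n_e)$, tested against the appropriate polynomial space. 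By construction $\boldsymbol p_h^I\cdot\boldsymbol n_e$ is single-valued across every interior face, so $\boldsymbol p_h^I\in\boldsymbol Q_h^c$.

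The remaining task, which is the main technical step, is to bound $\|\boldsymbol p_h^I-\boldsymbol p_h\|+\|\mathrm{div}_h(\boldsymbol p_h^I-\boldsymbol p_h)\|$ by the right-hand side of \eqref{approx_mixed}. On each $K$ the difference $\boldsymbol p_h^I-\boldsymbol p_h$ has zero interior moments and face moments equal to $\pm\tfrac12[\boldsymbol p_h]|_e$ on each face $e\subset\partial K$. By the standard scaling argument on the reference element (equivalence of norms on the finite dimensional RT/BDM spaces), together with the scalings of the degrees of freedom under an affine pull-back, one obtains
\begin{equation*}
\|\boldsymbol p_h^I-\boldsymbol p_h\|_{0,K}^2 \lesssim \sum_{e\subset\partial K}h_e\,\|[\boldsymbol p_h]\|_{0,e}^2,
\qquad
\|\mathrm{div}(\boldsymbol p_h^I-\boldsymbol p_h)\|_{0,K}^2\lesssim \sum_{e\subset\partial K}h_e^{-1}\|[\boldsymbol p_h]\|_{0,e}^2,
\end{equation*}
where the divergence bound uses an inverse inequality to move a factor of $h_K^{-1}$ from the $L^2(K)$ bound. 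Summing over $K\in\mathcal T_h$ and using shape regularity ($h_e\simeq h_K$) produces exactly \eqref{approx_mixed} with a constant $C_M^I$ depending only on the reference element and the shape-regularity constant.

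The main obstacle I anticipate is bookkeeping the scaling and ensuring the face moment setup is consistent in both the RT and BDM cases (since the face test space is $\mathcal P_k(e)$ in one and $\mathcal P_{k+1}(e)$ in the other); once the unisolvence of the degrees of freedom is in place, the rest is a reference-element argument. Both examples are then treated in exactly the same way, so the lemma follows.
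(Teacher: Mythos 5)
Your proposal is correct and takes essentially the same route as the paper's proof: both define $\boldsymbol p_h^I$ by averaging the shared (face) degrees of freedom of the RT/BDM element while keeping the element-interior moments, then bound $\|\boldsymbol p_h^I-\boldsymbol p_h\|$ by a reference-element scaling argument and obtain the divergence term via an inverse inequality. You merely spell out conditions (1) and (2) and the face-moment bookkeeping explicitly, which the paper's sketch omits.
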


\begin{proof}
We only sketch the proof of \eqref{approx_mixed} in Assumption
\ref{Ass:WG:mixed}. Denote the set of degrees of freedom of RT or BDM
element by $D$, see \cite{brezzi1991mixed, boffi2013mixed}. We then
define $\boldsymbol p_h^I$ as 
$$ 
d(\boldsymbol p_h^I) = \frac{1}{|\mathcal T_d|} \sum_{K \in \mathcal
  T_d} d(\boldsymbol p_h|_T) \qquad \forall d \in D,
$$ 
where $\mathcal T_d$ denotes the set of elements that share the degrees
of freedom $d$ and $|\mathcal T_d|$ denotes the cardinality of this
set. By the standard scaling argument, 
$$ 
\sum_{K \in \mathcal T_h} \|\boldsymbol p_h^I - \boldsymbol p_h\|
\lesssim \sum_{e\in \mathcal E_h^i} h_e^{1/2} \|[\boldsymbol
p_h]\|_{0,e}.
$$ 
Then \eqref{approx_mixed} follows from the inverse inequality.
\end{proof}

For any given $\eta=\rho^{-1}h_K^{-1}$, we rewrite the WG methods \eqref{Stabilizedpweak_WG} in the variational
form as: Find $(\boldsymbol{p}^{\eta}_h, u^{\eta}_h, \hat{\boldsymbol
p}^{\eta}_h)\in \boldsymbol Q_h\times V_h\times \hat{\boldsymbol
Q}_h$ such that for any  $(\boldsymbol{q}_h, v_h, \hat{\boldsymbol
q}_h)\in \boldsymbol Q_h\times V_h\times \hat{\boldsymbol Q}_h$
\begin{equation} \label{newWG}
\left\{
\begin{aligned}
(c \boldsymbol{p}^{\eta}_h, \boldsymbol{q}_h)_{\mathcal T_h} +
\rho^{-1} \langle h_K^{-1} (\boldsymbol p_h^\eta - \hat{\boldsymbol
    p}_h) \cdot \vect{n}, (\boldsymbol q_h - \hat{\boldsymbol
    q}_h) \cdot \vect{n} \rangle_{\partial \mathcal T_h} + 
(\nabla u_h, q_h)_{\mathcal T_h} - \langle u_h, \hat{\boldsymbol q}_h
\cdot \vect{n} \rangle_{\partial \mathcal T_h} & = 0,\\
-(\boldsymbol p_h^\eta, \nabla v_h)_{\partial \mathcal T_h} + \langle
\hat{\boldsymbol p}_h^\eta \cdot \vect{n}, v_h \rangle_{\partial
  \mathcal{T}_h}& = (f, v_h).
\end{aligned}
\right.
\end{equation}

\begin{theorem} \label{thm:WG-mixed}
Under the Assumption \ref{Ass:WG:mixed}, WG WG methodsmethods
\eqref{Stabilizedpweak_WG} converge to the mixed conforming methods
\eqref{mixedmethod} as $\rho \to 0$ with $\eta =
\rho^{-1}h_K^{-1}$.  More precisely, we have  
\begin{equation} \label{equ:WG-mixed}
\|\boldsymbol{p}^{\eta}_h-\boldsymbol{p}^{c}_h\|_{\boldsymbol H_h({\rm
  div})}+\|u^{\eta}_h-u_h^c\| \leq C_{w,3}\rho^{1/2}\|f\|,
\end{equation}
where $C_{w,3}$ is independent of both mesh size $h$ and $\rho$.
\end{theorem}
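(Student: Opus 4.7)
The plan is to mirror the argument of Theorem \ref{thm:HDG-primal}, trading the primal/HDG pairing for the mixed/WG one. By the second condition in Assumption \ref{Ass:WG:mixed}, every conforming flux $\boldsymbol q_h^c\in \boldsymbol Q_h^c$ has a single-valued normal trace $\boldsymbol q_h^c\cdot\boldsymbol n_e$ lying in $\hat Q(e)$, so $\hat{\boldsymbol q}_h^c := (\boldsymbol q_h^c\cdot\boldsymbol n_e)\boldsymbol n_e$ is an admissible element of $\hat{\boldsymbol Q}_h$ satisfying $(\boldsymbol q_h^c-\hat{\boldsymbol q}_h^c)\cdot\boldsymbol n_K=0$ on every face. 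Testing the first equation of \eqref{newWG} against $(\boldsymbol q_h^c,\hat{\boldsymbol q}_h^c)$ kills the stabilization contribution, and an elementwise integration by parts collapses the remaining terms into the standard mixed form. Integrating by parts in the second equation of \eqref{newWG} then gives, for all $(\boldsymbol q_h^c,v_h)\in \boldsymbol Q_h^c\times V_h$,
\begin{align*}
(c\boldsymbol p_h^\eta,\boldsymbol q_h^c)_{\mathcal T_h}-(u_h^\eta,{\rm div}\boldsymbol q_h^c)_{\mathcal T_h}&=0,\\
({\rm div}_h\boldsymbol p_h^\eta, v_h)_{\mathcal T_h}-\langle (\boldsymbol p_h^\eta-\hat{\boldsymbol p}_h^\eta)\cdot\boldsymbol n, v_h\rangle_{\partial\mathcal T_h}&=(f,v_h)_{\mathcal T_h}.
\end{align*}

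Subtracting the mixed conforming system \eqref{mixedmethod} (with $g_1=g_2=0$) produces error equations for $(\boldsymbol p_h^\eta-\boldsymbol p_h^c,u_h^\eta-u_h^c)$ whose only inhomogeneity is the face functional $\langle (\boldsymbol p_h^\eta-\hat{\boldsymbol p}_h^\eta)\cdot\boldsymbol n,\cdot\rangle_{\partial\mathcal T_h}$. Following the HDG strategy, I would introduce via the third item of Assumption \ref{Ass:WG:mixed} a conforming interpolant $\boldsymbol p_h^I\in \boldsymbol Q_h^c$ and apply the mixed well-posedness \eqref{equ:well-posedness-mixed} to the perturbed system for $(\boldsymbol p_h^I-\boldsymbol p_h^c,u_h^\eta-u_h^c)\in \boldsymbol Q_h^c\times V_h$, obtaining
\begin{equation*}
\|\boldsymbol p_h^I-\boldsymbol p_h^c\|_{\boldsymbol H({\rm div})}+\|u_h^\eta-u_h^c\|\lesssim \|\boldsymbol p_h^I-\boldsymbol p_h^\eta\|_{\boldsymbol H_h({\rm div})}+\sup_{v_h\in V_h}\frac{\langle (\boldsymbol p_h^\eta-\hat{\boldsymbol p}_h^\eta)\cdot\boldsymbol n, v_h\rangle_{\partial\mathcal T_h}}{\|v_h\|}.
\end{equation*}
A triangle inequality combined with the approximation estimate \eqref{approx_mixed} then reduces the whole problem to bounding the jump sum $\sum_{e}h_e^{-1/2}\|[\boldsymbol p_h^\eta]\|_{0,e}$ together with the face supremum above.

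Both quantities are tied to the WG stabilization seminorm. Since $\hat{\boldsymbol p}_h^\eta=\hat p_h^\eta\boldsymbol n_e$ and $\boldsymbol n^+=-\boldsymbol n^-$, a direct computation gives
\begin{equation*}
[\boldsymbol p_h^\eta]\big|_e=\bigl((\boldsymbol p_h^\eta-\hat{\boldsymbol p}_h^\eta)\cdot\boldsymbol n_{K^+}\bigr)\big|_e+\bigl((\boldsymbol p_h^\eta-\hat{\boldsymbol p}_h^\eta)\cdot\boldsymbol n_{K^-}\bigr)\big|_e,
\end{equation*}
while the trace inequality $h_K^{1/2}\|v_h\|_{0,\partial K}\lesssim \|v_h\|_{0,K}$ and a weighted Cauchy--Schwarz inequality bound the face supremum by $\bigl(\sum_K h_K^{-1}\|(\boldsymbol p_h^\eta-\hat{\boldsymbol p}_h^\eta)\cdot\boldsymbol n_K\|_{0,\partial K}^2\bigr)^{1/2}$. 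Both expressions are thus dominated by $\rho^{1/2}\|\boldsymbol{\tilde p}_h^\eta\|_{\widetilde{\rm div},\rho,h}$ in view of \eqref{WG:div:norm}, and Corollary \ref{uniform_stable_WG} delivers $\|\boldsymbol{\tilde p}_h^\eta\|_{\widetilde{\rm div},\rho,h}\lesssim \|f\|$, closing \eqref{equ:WG-mixed}. The main obstacle I anticipate is the bookkeeping between the single-valued per-element flux $(\boldsymbol p_h^\eta-\hat{\boldsymbol p}_h^\eta)\cdot\boldsymbol n_K$ controlled by the WG stabilization and the face jump $[\boldsymbol p_h^\eta]$ appearing in Assumption \ref{Ass:WG:mixed}, together with verifying that $(\boldsymbol q_h^c,\hat{\boldsymbol q}_h^c)$ is a valid test pair; once these structural points are confirmed, the remainder of the argument is a routine adaptation of the HDG case.
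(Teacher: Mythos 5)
Your proposal is correct and follows essentially the same route as the paper's proof: testing \eqref{newWG} with the conforming pair $(\boldsymbol q_h^c,(\boldsymbol q_h^c\cdot\boldsymbol n_e)\boldsymbol n_e)$ to eliminate the stabilization, subtracting \eqref{mixedmethod}, inserting the interpolant $\boldsymbol p_h^I$ from \eqref{approx_mixed} into the mixed well-posedness estimate \eqref{equ:well-posedness-mixed}, and controlling both the jump sum and the face functional by the stabilization seminorm before invoking Corollary \ref{uniform_stable_WG}. The identity you write for $[\boldsymbol p_h^\eta]|_e$ in terms of the per-element normal fluxes is exactly the bookkeeping the paper carries out via its jump--average decomposition, so the structural points you flag as remaining obstacles are indeed the ones the paper resolves, and in the same way.
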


\begin{proof}
From the assumption $\{\!\!\{\boldsymbol Q_h\}\!\!\}|_e \subset
\hat{Q}(e)$, by taking $\boldsymbol q_h = \boldsymbol q_h^c$ and
$\hat{\boldsymbol q}_h|_e = (\boldsymbol q_h^c \cdot
\vect{n}_e)\vect{n}_e$ in \eqref{newWG} and integrating by parts, we
see that $(\boldsymbol{p}_h^{\eta}, u_h^{\eta})$ satisfies 
\begin{equation} \label{equ:WG-conforming}
(c \boldsymbol{p}^{\eta}_h, \boldsymbol{q}^c_h)_{\mathcal
T_h}-(u^{\eta}_h,  {\rm div} \boldsymbol{q}^c_h)_{\mathcal T_h}=0
\qquad \forall \boldsymbol{q}^c_h\in \boldsymbol Q^c_h.
\end{equation}
Subtracting \eqref{mixedmethod} from \eqref{equ:WG-conforming} and
the second equation of \eqref{newWG}, we have 
\begin{equation}\label{mixederror}
\left\{
\begin{aligned}
(c (\boldsymbol{p}^{\eta}_h-\boldsymbol{p}^{c}_h),
\boldsymbol{q}^c_h)_{\mathcal T_h} 
- (u^{\eta}_h-u_h^c,  {\rm div} \boldsymbol{q}^c_h)_{\mathcal
T_h} &= 0 \qquad \qquad\qquad\qquad\qquad ~~\forall \boldsymbol{q}^c_h\in
\boldsymbol Q^c_h,\\
({\rm div}(\boldsymbol{p}^{\eta}_h - \boldsymbol{p}^{c}_h),
 v_h)_{\mathcal T_h} &=
\langle(\boldsymbol{p}^{\eta}_h-\hat{\boldsymbol p}^{\eta}_h)\cdot
\boldsymbol {n}, v_h\rangle_{\partial {\mathcal T_h}} \qquad  \forall
v_h \in V_h. 
\end{aligned}
\right.
\end{equation}
Noting that $\boldsymbol p_h^\eta \not\in \boldsymbol{Q}_h^c$, we have
that, for any  $\boldsymbol{p}^{I}_h\in \boldsymbol Q^c_h$, 
\begin{equation}\label{mixederror1}
\left\{
\begin{aligned}
&(c (\boldsymbol{p}^{I}_h-\boldsymbol{p}^{c}_h),
\boldsymbol{q}^c_h)_{\mathcal T_h} - (u^{\eta}_h-u_h^c,  {\rm div}
\boldsymbol{q}^c_h)_{\mathcal T_h} =
(c(\boldsymbol{p}^{I}_h-\boldsymbol{p}^{\eta}_h),\boldsymbol{q}^c_h)_{\mathcal
T_h}~~~\qquad\qquad\qquad \forall \boldsymbol{q}^c_h\in
\boldsymbol Q^c_h,\\
&({\rm div} (\boldsymbol{p}^{I}_h-\boldsymbol{p}^{c}_h),v_h)_{\mathcal
T_h} = \langle(\boldsymbol{p}^{\eta}_h - \hat{\boldsymbol
p}^{\eta}_h)\cdot \boldsymbol{n}, v_h\rangle_{\partial \mathcal T_h} 
+({\rm div}(\boldsymbol{p}^{I}_h-\boldsymbol{p}^{\eta}_h),v_h)_{\mathcal
  T_h}\qquad\qquad\forall  v_h \in V_h. 
\end{aligned}
\right.
\end{equation}
Because $(\boldsymbol{p}^{I}_h-\boldsymbol{p}^{c}_h)\in
\boldsymbol Q^c_h, (u^{\eta}_h-u_h^c)\in V_h$, by the well-posedness
of the mixed conforming methods \eqref{equ:well-posedness-mixed}, trace
inequality, inverse inequality and Cauchy inequality, we have
$$ 
\begin{aligned}
& \|\boldsymbol{p}^{I}_h-\boldsymbol{p}^{c}_h\|_{\boldsymbol H({\rm
div}) }+\|u^{\eta}_h-u_h^c\| \\
\leq &~ C_M\left( \sup_{\boldsymbol{q}^c_h\in
\boldsymbol{Q}^c_h} \frac{(c(\boldsymbol{p}^{I}_h -
\boldsymbol{p}^{\eta}_h), \boldsymbol{q}^c_h)_{\mathcal
T_h}}{\|\boldsymbol{q}^c_h\|_{\boldsymbol H({\rm div})}} 
+\sup_{v_h\in V_h} \frac{\langle(\boldsymbol{p}^{\eta}_h - \hat
{\boldsymbol p}^{\eta}_h)\cdot \boldsymbol {n}, v_h\rangle_{\partial
{\mathcal T_h}} + ({\rm
div}(\boldsymbol{p}^{I}_h-\boldsymbol{p}^{\eta}_h),v_h)_{\mathcal
T_h}}{\|v_h\|} \right)\\
\lesssim &~ 
\|\boldsymbol{p}^{I}_h-\boldsymbol{p}^{\eta}_h\|+\|{\rm
div}_h(\boldsymbol{p}^{I}_h-\boldsymbol{p}^{\eta}_h)\|+
\langle h^{-1}(\boldsymbol{p}^{\eta}_h-\hat {\boldsymbol
p}^{\eta}_h)\cdot \boldsymbol {n},
(\boldsymbol{p}^{\eta}_h-\hat{\boldsymbol p}^{\eta}_h)\cdot
\boldsymbol {n}\rangle_{\partial \mathcal T_h}^{1/2}.
\end{aligned}
$$
Hence, by Assumption \ref{Ass:WG:mixed} and inverse inequality, we have
$$ 
\begin{aligned}
\|\boldsymbol{p}^{\eta}_h-\boldsymbol{p}^{c}_h\|_{\boldsymbol H({\rm
    div})}+\|u^{\eta}_h-u_h^c\| 
& \lesssim \langle h^{-1}(\boldsymbol{p}^{\eta}_h-\hat {\boldsymbol
p}^{\eta}_h)\cdot \boldsymbol {n},
(\boldsymbol{p}^{\eta}_h-\hat{\boldsymbol p}^{\eta}_h)\cdot
\boldsymbol {n}\rangle_{\partial \mathcal T_h}^{1/2}
+ \inf_{\boldsymbol p_h^I \in \boldsymbol Q_h^c}
\left( \|\boldsymbol{p}^{I}_h-\boldsymbol{p}^{\eta}_h\| 
+ \|{\rm div}_h(\boldsymbol{p}^{I}_h-\boldsymbol{p}^{\eta}_h)\| 
\right) \\
& \lesssim \langle h^{-1}(\boldsymbol{p}^{\eta}_h-\hat {\boldsymbol
p}^{\eta}_h)\cdot \boldsymbol {n},
(\boldsymbol{p}^{\eta}_h-\hat{\boldsymbol p}^{\eta}_h)\cdot
\boldsymbol {n}\rangle_{\partial \mathcal T_h}^{1/2} + \sum_{e\in
  \mathcal{E}_h^i} h_e^{-1/2}\|[\boldsymbol p_h^\eta]\|_{0,e}.
\end{aligned}
$$
From the fact that 
$$
\langle(\boldsymbol{p}^{\eta}_h-\boldsymbol {\hat p}^{\eta}_h)\cdot
\boldsymbol {n}, (\boldsymbol{p}^{\eta}_h-\boldsymbol {\hat
p}^{\eta}_h)\cdot \boldsymbol {n}\rangle_{\partial {\mathcal T_h}}
= 2\langle \{\!\!\{\boldsymbol{p}^{\eta}_h-\boldsymbol {\hat
  p}^{\eta}_h\}\!\!\},\{\!\!\{\boldsymbol{p}^{\eta}_h-\boldsymbol
{\hat p}^{\eta}_h\}\!\!\}\rangle_{\mathcal E_h}+\frac{1}{2}\langle
[\boldsymbol{p}^{\eta}_h],[\boldsymbol{p}^{\eta}_h]\rangle_{\mathcal
  E_h},
$$
we obtain  
$$ 
\begin{aligned}
\|\boldsymbol{p}^{\eta}_h-\boldsymbol{p}^{c}_h\|_{\boldsymbol H({\rm
div})}+\|u^{\eta}_h-u_h^c\| 
& \lesssim \langle h^{-1}(\boldsymbol{p}^{\eta}_h-\hat {\boldsymbol
p}^{\eta}_h)\cdot \boldsymbol {n},
(\boldsymbol{p}^{\eta}_h-\hat{\boldsymbol p}^{\eta}_h)\cdot
\boldsymbol {n}\rangle_{\partial \mathcal T_h}^{1/2} + \sum_{e\in
  \mathcal{E}_h^i} h_e^{-1/2}\|[\boldsymbol p_h^\eta]\|_{0,e} \\
& \lesssim \langle h^{-1}(\boldsymbol{p}^{\eta}_h-\hat {\boldsymbol
p}^{\eta}_h)\cdot \boldsymbol {n},
(\boldsymbol{p}^{\eta}_h-\hat{\boldsymbol p}^{\eta}_h)\cdot
\boldsymbol {n}\rangle_{\partial \mathcal T_h}^{1/2} \lesssim
\rho^{1/2}\|f\|,
\end{aligned}
$$ 
where we used Corollary \ref{uniform_stable_WG} in the last
step. This completes the proof. 
\end{proof}

\section{Analysis of HDG and WG}\label{sec:HDG:WG}
In this section, we present the analysis of HDG and WG methods and hence prove the uniformly well-posed results of the 
HDG and WG methods provided in Section \ref{sec:framework}. Namely, we prove the Theorem \ref{wellposed:Ah:divgrad} for HDG methods and the Theorem  \ref{wellposed:Aw:graddiv} for WG methods. That means we need to prove that the HDG and WG methods satisfy the consistency, the uniform continuity and the inf-sup condition uniformly with respect to the corresponding norms. 

%First, we show that HDG and WG satisfy the consistency by the following lemma.
\begin{lemma}\label{consistent}
Both the HDG methods and WG methods are consistent. 
\end{lemma}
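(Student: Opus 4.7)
The plan is to verify, for each method separately, that substituting the exact solution (extended to the enlarged space by its trace) into the discrete bilinear form reproduces the right-hand side $-(f, v_h)_{\mathcal T_h}$ for all test functions. The argument relies on two ingredients: (i) the stabilization terms vanish for the exact solution because its trace is single-valued and satisfies the boundary condition, and (ii) element-wise integration by parts together with the regularity $u\in H^1_0(\Omega)$, $\boldsymbol p\in H(\mathrm{div},\Omega)$ of the PDE solution.

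For the HDG method, I would define $\tilde u=(u, \hat u)$ with $\hat u:=u|_{\mathcal E_h}$, which is well defined and single-valued since $u\in H^1_0(\Omega)$, and satisfies $\hat u|_{\mathcal E_h^{\partial}}=0$, so $\tilde u\in \tilde V_h$-admissible in the sense that it pairs correctly with $\tilde v_h$ in $c_h$. Then $c_h(\tilde u,\tilde v_h)=-\tau\langle u-\hat u, v_h-\hat v_h\rangle_{\partial\mathcal T_h}=0$ because $u-\hat u\equiv 0$ on $\partial K$ for every $K$. For $a_h(\boldsymbol p,\boldsymbol q_h)+b_h(\boldsymbol q_h,\tilde u)$, I would integrate by parts element-by-element using $c\boldsymbol p+\nabla u=0$ to obtain $0$. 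For $b_h(\boldsymbol p,\tilde v_h)$, integration by parts on each $K$ yields
\[
b_h(\boldsymbol p,\tilde v_h)=(\mathrm{div}_h\boldsymbol p, v_h)_{\mathcal T_h}-\langle \boldsymbol p\cdot\boldsymbol n,v_h-\hat v_h\rangle_{\partial\mathcal T_h},
\]
and the boundary term is rewritten via \eqref{equ:dg-identity_1} as $\langle\{\boldsymbol p\},\llbracket v_h-\hat v_h\rrbracket\rangle_{\mathcal E_h}+\langle[\boldsymbol p],\{v_h-\hat v_h\}\rangle_{\mathcal E_h^i}$; since $[\boldsymbol p]=0$ (because $\boldsymbol p\in H(\mathrm{div},\Omega)$) and since $\hat v_h$ is single-valued with $\hat v_h|_{\mathcal E_h^{\partial}}=0$, a short computation shows this boundary term vanishes, leaving $b_h(\boldsymbol p,\tilde v_h)=(\mathrm{div}\boldsymbol p,v_h)_{\mathcal T_h}=-(f,v_h)_{\mathcal T_h}$.

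For the WG method, I would set $\boldsymbol{\tilde p}=(\boldsymbol p,\boldsymbol{\hat p})$ with $\boldsymbol{\hat p}|_e:=(\boldsymbol p\cdot\boldsymbol n_e)\boldsymbol n_e$, which is single-valued because $\boldsymbol p\in H(\mathrm{div},\Omega)$. The stabilization term in $a_w$ vanishes: $(\boldsymbol p-\boldsymbol{\hat p})\cdot\boldsymbol n_e\equiv 0$ on every edge. So $a_w(\boldsymbol{\tilde p},\boldsymbol{\tilde q}_h)+b_w(\boldsymbol{\tilde q}_h,u)=(c\boldsymbol p,\boldsymbol q_h)_{\mathcal T_h}+(u,\mathrm{div}_h\boldsymbol q_h)_{\mathcal T_h}-\langle u,\boldsymbol{\hat q}_h\cdot\boldsymbol n\rangle_{\partial\mathcal T_h}$; integration by parts on each $K$ converts this to $(c\boldsymbol p+\nabla u,\boldsymbol q_h)_{\mathcal T_h}$ plus a boundary term $\langle u,(\boldsymbol q_h-\boldsymbol{\hat q}_h)\cdot\boldsymbol n\rangle_{\partial\mathcal T_h}$ which, since $u$ is single-valued on edges and $[\boldsymbol{\hat q}_h]=0$, reduces via \eqref{equ:dg-identity_1} to $0$ using $u|_{\partial\Omega}=0$. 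Finally, $b_w(\boldsymbol{\tilde p},v_h)=(\boldsymbol p,\nabla_h v_h)_{\mathcal T_h}-\langle\boldsymbol{\hat p}\cdot\boldsymbol n,v_h\rangle_{\partial\mathcal T_h}$; integrating by parts and using $\boldsymbol{\hat p}\cdot\boldsymbol n=\boldsymbol p\cdot\boldsymbol n$ on every face gives $-(\mathrm{div}\boldsymbol p,v_h)_{\mathcal T_h}=-(f,v_h)_{\mathcal T_h}$.

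The step that deserves care, though not really difficult, is the bookkeeping on $\partial\mathcal T_h$: one must consistently exploit single-valuedness of the chosen trace together with single-valuedness of $\hat v_h$ (HDG) or $\boldsymbol{\hat q}_h$ (WG) via the identity \eqref{equ:dg-identity_1}, and also use the homogeneous boundary condition on $\mathcal E_h^{\partial}$. Once these cancellations are in place, the remaining volume terms reproduce exactly the pointwise equations of \eqref{H11}, giving consistency for both methods simultaneously.
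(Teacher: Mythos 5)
Your overall plan --- insert the exact solution together with its single-valued traces $\hat u=u|_{\mathcal E_h}$ and $\boldsymbol{\hat p}|_e=(\boldsymbol p\cdot\boldsymbol n_e)\boldsymbol n_e$, note that the stabilization terms vanish, and use \eqref{equ:dg-identity_1} with $\llbracket \hat v_h\rrbracket=0$, $[\boldsymbol p]=0$, $[\boldsymbol{\hat q}_h]=0$, $\llbracket u\rrbracket=0$ to kill the skeleton terms --- is exactly the verification of \eqref{general: consistence} the paper has in mind, and your treatment of $c_h$, of $a_h(\boldsymbol p,\boldsymbol q_h)+b_h(\boldsymbol q_h,\tilde u)$, of the stabilization term in $a_w$, and of $b_w(\boldsymbol{\tilde p},v_h)$ is fine. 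However, two of the remaining verifications fail as written. For HDG, elementwise integration by parts gives $b_h(\boldsymbol p,\tilde v_h)=(\boldsymbol p,\nabla_h v_h)_{\mathcal T_h}-\langle \boldsymbol p\cdot\boldsymbol n,\,v_h-\hat v_h\rangle_{\partial\mathcal T_h}$, not the formula you display, and the boundary term here does \emph{not} vanish: by \eqref{equ:dg-identity_1} it equals $\langle\{\boldsymbol p\},\llbracket v_h\rrbracket\rangle_{\mathcal E_h}$, which is nonzero for discontinuous $v_h$. No integration by parts is needed: from \eqref{definition:ahbhch}, $b_h(\boldsymbol p,\tilde v_h)=-(v_h,{\rm div}\,\boldsymbol p)_{\mathcal T_h}+\langle\hat v_h,\boldsymbol p\cdot\boldsymbol n\rangle_{\partial\mathcal T_h}$, the second term is zero by \eqref{equ:dg-identity_1} because $\llbracket\hat v_h\rrbracket=0$ and $[\boldsymbol p]=0$, and ${\rm div}\,\boldsymbol p=f$ gives $-(f,v_h)_{\mathcal T_h}$. (Your intermediate line $({\rm div}\,\boldsymbol p,v_h)_{\mathcal T_h}=-(f,v_h)_{\mathcal T_h}$ also carries the wrong sign.)

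The WG first equation has the same defect. From \eqref{WG_aform}, $b_w(\boldsymbol{\tilde q}_h,u)=(\boldsymbol q_h,\nabla u)_{\mathcal T_h}-\langle\boldsymbol{\hat q}_h\cdot\boldsymbol n,u\rangle_{\partial\mathcal T_h}$; your volume term $+(u,{\rm div}_h\boldsymbol q_h)_{\mathcal T_h}$ is not equal to this, and the skeleton term you end up with, $\langle u,(\boldsymbol q_h-\boldsymbol{\hat q}_h)\cdot\boldsymbol n\rangle_{\partial\mathcal T_h}$, does \emph{not} reduce to zero: by \eqref{equ:dg-identity_1} it equals $\langle[\boldsymbol q_h],\{u\}\rangle_{\mathcal E_h^i}$, which survives because $\boldsymbol q_h$ has no normal continuity across interior faces. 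Again the direct route works: the term that actually occurs is $\langle u,\boldsymbol{\hat q}_h\cdot\boldsymbol n\rangle_{\partial\mathcal T_h}$, which vanishes since $[\boldsymbol{\hat q}_h]=0$ and $\llbracket u\rrbracket=0$ (interior continuity of $u$ plus $u|_{\partial\Omega}=0$), leaving $(c\boldsymbol p+\nabla u,\boldsymbol q_h)_{\mathcal T_h}=0$. With these two repairs the argument is complete and coincides with the verification the paper declares obvious.
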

\begin{proof}
By the verification of  \eqref{general: consistence}, the proof is obvious.
\end{proof}
\subsection{Proof for Part  \ref{wellposed:Ah:div} of Theorem \ref{wellposed:Ah:divgrad}} 

\begin{theorem}\label{bounded:ahbhch}
For any $0<\rho\leq 1$, the bilinear form $A_h((\boldsymbol{p}_h, \tilde u_h),(\boldsymbol{q}_h, \tilde v_h))$ is uniformly continuous.
%the boundedness of $a_h(\cdot,\cdot)$, $b_h(\cdot,\cdot)$  and $c_h( \tilde{u}_h, \tilde{v}_h)$ is as follows
%\begin{eqnarray}
%|a_h(\boldsymbol{p}_h,\boldsymbol{q}_h)|\leq \|\boldsymbol{p}_h\|_{{\rm div}, \rho,h} \|\boldsymbol{q}_h\|_{{\rm div}, \rho,h},\\
%|b_h(\boldsymbol{q}_h, \tilde u_h)|\leq \|\boldsymbol{q}_h\|_{{\rm div}, \rho,h} \|\tilde u_h\|_{0,\rho,h},\\
%|c_h( \tilde{u}_h, \tilde{v}_h)|\lesssim \|\tilde u_h\|_{0,\rho,h} \|\tilde v_h\|_{0,\rho,h}.
%\end{eqnarray}
\end{theorem}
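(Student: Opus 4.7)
The plan is to bound the four components $a_h$, $b_h(\boldsymbol q_h, \tilde u_h)$, $b_h(\boldsymbol p_h, \tilde v_h)$, and $c_h$ of $A_h$ separately, each by a product of the component norms appearing in $\|\cdot\|_{{\rm div},\rho,h}$ and $\|\cdot\|_{0,\rho,h}$, then collect them via the triangle inequality. Throughout, the restriction $0 < \rho \le 1$ ensures every power of $\rho$ that appears on the unfavourable side of an inequality can be absorbed.

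First, $a_h(\boldsymbol p_h, \boldsymbol q_h) = (c\boldsymbol p_h, \boldsymbol q_h)_{\mathcal T_h}$ is bounded by the weighted Cauchy--Schwarz inequality with weight $c$, since $(c\boldsymbol q_h, \boldsymbol q_h)_{\mathcal T_h}$ is the leading piece of $\|\boldsymbol q_h\|^2_{{\rm div},\rho,h}$. For $c_h(\tilde u_h, \tilde v_h) = -\rho\sum_K h_K\langle u_h - \hat u_h, v_h - \hat v_h\rangle_{\partial K}$, Cauchy--Schwarz on $\partial\mathcal T_h$ reduces the task to estimating $\rho\sum_K h_K\|u_h - \hat u_h\|^2_{0,\partial K}$. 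Splitting $u_h - \hat u_h$ into its two parts and applying the discrete trace inequality $h_K\|u_h\|^2_{0,\partial K} \lesssim \|u_h\|^2_{0,K}$ to the first while collecting the $\hat u_h$ terms edge-by-edge to produce $\rho\sum_{e\in \mathcal E_h^i} h_e \|\hat u_h\|^2_{0,e}$ shows that this quantity is controlled by $\|\tilde u_h\|^2_{0,\rho,h}$, using $\rho \le 1$.

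The only nonroutine step is the boundary term of $b_h$. Writing $b_h(\boldsymbol q_h, \tilde u_h) = -(u_h, {\rm div}\boldsymbol q_h)_{\mathcal T_h} + \langle \hat u_h, \boldsymbol q_h \cdot \boldsymbol n_K\rangle_{\partial \mathcal T_h}$, the volume term is immediately bounded by $\|u_h\|\,\|{\rm div}_h\boldsymbol q_h\|$, both factors being subsumed by the two norms. For the boundary term I would use the DG identity \eqref{equ:dg-identity_1} together with the facts that $\hat u_h$ is single-valued on interior edges and vanishes on $\mathcal E_h^\partial$; this collapses the sum on $\partial \mathcal T_h$ to $\sum_{e \in \mathcal E_h^i}\langle \hat u_h, [\boldsymbol q_h]\rangle_e$. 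Since $\hat u_h|_e \in \hat V(e)$ and $\hat P_e$ is the $L^2$ projection onto $\hat V(e)$, the pairing is unchanged if $[\boldsymbol q_h]$ is replaced by $\hat P_e([\boldsymbol q_h])$. A weighted Cauchy--Schwarz with weights $(h_e\rho)^{1/2}$ on the $\hat u_h$ factor and $(h_e\rho)^{-1/2}$ on the projected jump then produces exactly $\|\tilde u_h\|_{0,\rho,h}\|\boldsymbol q_h\|_{{\rm div},\rho,h}$, with the powers of $\rho$ cancelling. The symmetric argument bounds $b_h(\boldsymbol p_h, \tilde v_h)$.

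The delicate move, and really the whole content of the proof, is the insertion of the projection $\hat P_e$ before applying Cauchy--Schwarz: without it one would need control of the full jump $[\boldsymbol q_h]$, which is not offered by $\|\boldsymbol q_h\|_{{\rm div},\rho,h}$. All remaining estimates are either direct applications of Cauchy--Schwarz or standard scaled trace inequalities, and the resulting constants manifestly depend on neither $h$ nor $\rho \in (0,1]$.
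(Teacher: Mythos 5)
Your proposal is correct and follows essentially the same route as the paper: the decisive step in both is rewriting the boundary term of $b_h$ via the DG identity and the single-valuedness of $\hat u_h$ as $\sum_{e\in\mathcal E_h^i}\langle\hat u_h,[\boldsymbol q_h]\rangle_e$, inserting the projection $\hat P_e$ using $\hat u_h|_e\in\hat V(e)$, and then applying the $\rho$-weighted Cauchy--Schwarz inequality, with $a_h$ and $c_h$ handled exactly as you describe. No substantive differences to report.
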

\begin{proof}
The boundedness of $a_h(\boldsymbol{p}_h,\boldsymbol{q}_h)$ is obvious. Before we discuss the boundedness of $b_h(\boldsymbol{q}_h, \tilde u_h)$, by \eqref{equ:dg-identity_1} and noting $\lbrack\!\lbrack \hat u_h \rbrack\!\rbrack=0$, we rewrite $b_h(\boldsymbol{q}_h, \tilde u_h)$ as:
\begin{equation}\label{Identity:bh}
b_h(\boldsymbol{q}_h, \tilde u_h)=-(u_h, {\rm div}_h \boldsymbol{q}_h)_{\mathcal T_h}+\sum\limits_{K\in \mathcal{T}_h}\langle\hat{u}_h, \boldsymbol{q}_h\cdot\boldsymbol{n}_K\rangle_{\partial K}=-(u_h, {\rm div}_h \boldsymbol{q}_h)_{\mathcal T_h}+\sum\limits_{e\in \mathcal{E}_h^i}\langle\hat{u}_h,[\boldsymbol{q}_h]\rangle_e.
\end{equation}
Now we show the boundedness of $b_h(\boldsymbol{q}_h, \tilde u_h)$ here. By \eqref{Identity:bh} and the definition of $\hat P_e$, 
we have:
\begin{eqnarray}
\nonumber b_h(\boldsymbol{q}_h, \tilde u_h)
&=&-(u_h, {\rm div}_h \boldsymbol{q}_h)_{\mathcal T_h}+\langle\hat{u}_h,[\boldsymbol{q}_h]\rangle_{\mathcal E^i_h}\\
&=&-(u_h, {\rm div}_h\boldsymbol{q}_h)_{\mathcal T_h}+\langle\hat{u}_h,\hat P_e([\boldsymbol{q}_h])\rangle_{\mathcal E^i_h}\\
&\leq& \|{\rm div}_h \boldsymbol{q}_h\| \|u_h\|+\Big(\rho^{-1}\sum\limits_{e\in \mathcal{E}^i_h}h_e^{-1} \langle\hat P_e([\boldsymbol{q}_h]),\hat P_e([\boldsymbol{q}_h])\rangle_e\Big)^{\frac{1}{2}}  \Big(\rho \sum\limits_{e\in \mathcal{E}^i_h}h_e\langle\hat u_h,\hat u_h\rangle_e\Big)^{\frac{1}{2}} \\
&\leq& \|\boldsymbol{q}_h\|_{{\rm div}, \rho,h} \|\tilde u_h\|_{0,\rho,h},
\end{eqnarray}
which proves the boundedness of $b_h(\boldsymbol{q}_h, \tilde u_h)$.

Next we prove the boundedness of $c_h( \tilde{u}_h, \tilde{v}_h)$. 

By the Cauchy inequality, we have:
\begin{eqnarray}
\nonumber |c_h( \tilde{u}_h, \tilde{v}_h)|&=&\rho\sum\limits_{K\in \mathcal{T}_h}h_K\langle u_h-\hat{u}_h, v_h-\hat{v}_h\rangle_{\partial K}\\
&\leq&  \Big(\rho\sum\limits_{K\in \mathcal{T}_h}h_K\langle u_h-\hat{u}_h, u_h-\hat{u}_h\rangle_{\partial K}\Big)^{\frac{1}{2}}\Big(\rho\sum\limits_{K\in \mathcal{T}_h}h_K\langle v_h-\hat{v}_h, v_h-\hat{v}_h\rangle_{\partial K}\Big)^{\frac{1}{2}}.
\end{eqnarray}
By the trace inequality, inverse inequality, and noting that $0<\rho\leq 1$, we have: 
\begin{eqnarray}
&&\rho\sum\limits_{K\in \mathcal{T}_h}h_K\langle u_h-\hat{u}_h, u_h-\hat{u}_h\rangle_{\partial K}\\
&=&\rho\sum\limits_{K\in \mathcal{T}_h}h_K\Big(\langle u_h, u_h\rangle_{\partial K}-2\langle u_h, \hat{u}_h\rangle_{\partial K}+\langle \hat{u}_h, \hat{u}_h\rangle_{\partial K}\Big)\\
&\leq&2\rho\sum\limits_{K\in \mathcal{T}_h}h_K\Big(\langle u_h, u_h\rangle_{\partial K}+\langle \hat{u}_h, \hat{u}_h\rangle_{\partial K}\Big)\\
&\lesssim&2\rho\sum\limits_{K\in \mathcal{T}_h}h_K\langle u_h, u_h\rangle_{\partial K}+4\rho\sum\limits_{K\in \mathcal{E}_h^i}h_e\langle \hat{u}_h, \hat{u}_h\rangle_{e}\\
&\lesssim&2\sum\limits_{K\in \mathcal{T}_h}(u_h, u_h)_{K}+4\rho\sum\limits_{K\in \mathcal{E}_h^i}h_e\langle \hat{u}_h, \hat{u}_h\rangle_{e}
\leq 4 \|\tilde u_h\|^2_{0,\rho,h}.
\end{eqnarray}
Similarly, we have:
\begin{equation}
\rho\sum\limits_{K\in \mathcal{T}_h}h_K\langle v_h-\hat{v}_h, v_h-\hat{v}_h\rangle_{\partial K}\lesssim \|\tilde v_h\|^2_{0,\rho,h}.
\end{equation}
Hence, we obtain $|c_h( \tilde{u}_h, \tilde{v}_h)|\lesssim \|\tilde u_h\|_{0,\rho,h} \|\tilde v_h\|_{0,\rho,h}$.
\end{proof}

We denote 
\begin{equation}
{\rm Ker}(B)=\{\boldsymbol q_h\in \boldsymbol Q_h: b_h(\boldsymbol q_h,\tilde u_h)=0, \forall \tilde u_h\in \tilde V_h\}.
\end{equation}
Then, we have the coercivity of $a_h(\cdot,\cdot)$ on the ${\rm Ker}(B)$ as follows:
\begin{theorem}\label{coercivity:ah}
Assume that ${\rm div}_h \boldsymbol Q_h \subset V_h$, then
\begin{equation}
a_h(\boldsymbol{p}_h,\boldsymbol{p}_h)\geq \|\boldsymbol{p}_h\|^2_{{\rm div}, \rho,h},~~\forall \boldsymbol{p}_h \in {\rm Ker}(B).
\end{equation}
\end{theorem}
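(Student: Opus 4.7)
The plan is to rewrite both sides explicitly and then use the kernel condition twice, with two carefully chosen test pairs, to force the two extra terms in $\|\boldsymbol p_h\|^2_{{\rm div},\rho,h}$ to vanish. Since
$$
a_h(\boldsymbol p_h,\boldsymbol p_h)=(c\boldsymbol p_h,\boldsymbol p_h)_{\mathcal T_h}
\quad\text{and}\quad
\|\boldsymbol p_h\|^2_{{\rm div},\rho,h}=(c\boldsymbol p_h,\boldsymbol p_h)_{\mathcal T_h}+\|{\rm div}_h\boldsymbol p_h\|^2+\rho^{-1}\!\!\sum_{e\in\mathcal E_h^i}\!h_e^{-1}\|\hat P_e([\boldsymbol p_h])\|_{0,e}^2,
$$
the stated inequality is equivalent to showing that, under $\boldsymbol p_h\in\mathrm{Ker}(B)$, the last two (nonnegative) terms are both zero; once that is done, the inequality actually holds with equality.

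To isolate them, I would start from the rewritten expression
$$
b_h(\boldsymbol p_h,\tilde u_h)=-(u_h,{\rm div}_h\boldsymbol p_h)_{\mathcal T_h}+\sum_{e\in\mathcal E_h^i}\langle \hat u_h,\hat P_e([\boldsymbol p_h])\rangle_e
$$
derived from \eqref{Identity:bh} together with the defining property of the $L^2$ projection $\hat P_e$ onto $\hat V(e)$ (the $\hat u_h$ factor lives in $\hat V(e)$ on each interior edge, so we may replace $[\boldsymbol p_h]$ by $\hat P_e([\boldsymbol p_h])$). The first probe is $\tilde u_h=(u_h,\hat u_h)=({\rm div}_h\boldsymbol p_h,0)$; this is legal precisely because of the hypothesis ${\rm div}_h\boldsymbol Q_h\subset V_h$, and $b_h(\boldsymbol p_h,\tilde u_h)=0$ yields $\|{\rm div}_h\boldsymbol p_h\|^2=0$. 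The second probe is $\tilde u_h=(0,\hat u_h)$ with $\hat u_h|_e=\hat P_e([\boldsymbol p_h])$ for $e\in\mathcal E_h^i$ and $\hat u_h|_{\mathcal E_h^\partial}=0$; this belongs to $\hat V_h$ by construction of $\hat P_e$, and yields $\sum_{e\in\mathcal E_h^i}\|\hat P_e([\boldsymbol p_h])\|_{0,e}^2=0$.

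Combining the two identities gives ${\rm div}_h\boldsymbol p_h=0$ and $\hat P_e([\boldsymbol p_h])=0$ on every interior edge, whence $\|\boldsymbol p_h\|^2_{{\rm div},\rho,h}=(c\boldsymbol p_h,\boldsymbol p_h)_{\mathcal T_h}=a_h(\boldsymbol p_h,\boldsymbol p_h)$, which is the desired bound (in fact an equality). There is no analytical obstacle here; the only delicate point is checking the admissibility of the two probe functions, and this is exactly what the hypothesis ${\rm div}_h\boldsymbol Q_h\subset V_h$ and the definition of $\hat V_h$ (including the boundary condition $\hat v_h|_{\mathcal E_h^\partial}=0$, which is compatible because $\hat P_e([\boldsymbol p_h])$ is only prescribed on interior edges) provide.
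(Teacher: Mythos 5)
Your proof is correct and follows essentially the same route as the paper, which characterizes ${\rm Ker}(B)$ as $\{\boldsymbol q_h:{\rm div}_h\boldsymbol q_h=0,\ \hat P_e([\boldsymbol q_h])=0\}$ using the identity \eqref{Identity:bh} and the hypothesis ${\rm div}_h\boldsymbol Q_h\subset V_h$; your two explicit probe functions are precisely the test choices implicit in that characterization. The observation that the bound is in fact an equality is a harmless refinement.
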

\begin{proof}
Since 
\begin{equation}
{\rm Ker}(B)=\{\boldsymbol q_h\in \boldsymbol Q_h: b_h(\boldsymbol q_h,\tilde u_h)=0,~\forall \tilde u_h\in \tilde V_h\},
\end{equation}
then by \eqref{Identity:bh} and under the assumption that ${\rm div}_h \boldsymbol Q_h \subset V_h$, we have:
\begin{eqnarray}
{\rm Ker}(B)
&=&\{\boldsymbol q_h\in \boldsymbol Q_h:-(u_h, {\rm div} \boldsymbol{q}_h)_{\mathcal T_h}+\langle\hat{u}_h,[\boldsymbol{q}_h]\rangle_{\mathcal E^i_h}=0,~\forall \tilde u_h\in \tilde V_h\}\\
&=&\{\boldsymbol q_h\in \boldsymbol Q_h:-(u_h, {\rm div} \boldsymbol{q}_h)_{\mathcal T_h}+\langle\hat{u}_h,\hat P_e([\boldsymbol{q}_h])\rangle_{\mathcal E^i_h}=0,~\forall \tilde u_h\in \tilde V_h\}\\
&=&\{\boldsymbol q_h\in \boldsymbol Q_h: {\rm div}_h\boldsymbol q_h =0, \hat P_e([\boldsymbol{q}_h])=0\}.
\end{eqnarray}
Hence, by the definition of $\|\boldsymbol q_h\|_{{\rm div},\rho, h}$, we obtain $a_h(\boldsymbol{p}_h,\boldsymbol{p}_h)\geq \|\boldsymbol{p}_h\|^2_{{\rm div},\rho,h},~\forall~ \boldsymbol{p}_h \in {\rm Ker}(B)$.
\end{proof}   

\begin{lemma}\label{RT:jump}
Given the edges (faces) $e_1, e_2, \cdots, e_{d+1}$ of the simplex $K$ and functions $\boldsymbol q\in \boldsymbol{L^2(K)}$ and~$\hat {\zeta}_i\in L^2(e_i), i=1,\cdots,d+1$, there is a unique function $\boldsymbol z\in \boldsymbol{\mathcal P_r(K)} \oplus\boldsymbol x \mathcal P_r(K), r\ge 0$ such that,  
\begin{eqnarray}
&&(\boldsymbol z-\boldsymbol q, \boldsymbol p)_K=0, ~~\forall~ \boldsymbol p\in \boldsymbol{\mathcal P_{r-1}(K)},\\
&&(\boldsymbol z\cdot \boldsymbol n_i-\hat {\zeta}_i, \hat v)_{e_i}=0,~~\forall~ \hat v\in \mathcal P_r(e_i), i=1,\cdots, d+1,
\end{eqnarray}
where $\boldsymbol n_i$ is the outward normal unit vector of $e_i$. Moreover:
\begin{equation}\displaystyle
\|\boldsymbol z\|_{0,K}\leq C_{d,r}\Big(\|\boldsymbol q\|_{0,K}+h_K^{1/2}\sum_{i=1}^{d+1}\|\hat {\zeta}_i\|_{0,e_i}\Big),
\end{equation}
where $C_{d,r}$ depends only on $d,r$, and the shape regular constant. 
\end{lemma}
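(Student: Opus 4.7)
The statement is the classical unisolvence-plus-stability estimate for the Raviart-Thomas space $\boldsymbol{\mathcal P_r(K)}\oplus\boldsymbol x\mathcal P_r(K)$ on the simplex $K$, with degrees of freedom given by the interior moments against $\boldsymbol{\mathcal P_{r-1}(K)}$ and the normal moments against $\mathcal P_r(e_i)$ on each face. My plan has three steps: a dimension count, a uniqueness argument along the lines of the classical RT unisolvence, and a Piola-based scaling argument for the quantitative bound.

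First, a direct count shows that the number of linear conditions imposed on $\boldsymbol z$ equals the dimension of $\boldsymbol{\mathcal P_r(K)}\oplus\boldsymbol x\mathcal P_r(K)$, which is the well-known dimension formula for the RT$_r$ element. Moreover, only the $L^2$-projections of $\boldsymbol q$ onto $\boldsymbol{\mathcal P_{r-1}(K)}$ and of $\hat\zeta_i$ onto $\mathcal P_r(e_i)$ enter the equations, so effectively the data lives in a finite-dimensional space. Consequently, existence of $\boldsymbol z$ is equivalent to uniqueness, and it suffices to show that the homogeneous problem ($\boldsymbol q=0$, $\hat\zeta_i=0$) admits only $\boldsymbol z=0$. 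The vanishing normal moments, together with the fact that $\boldsymbol z\cdot\boldsymbol n_i\in \mathcal P_r(e_i)$, force $\boldsymbol z\cdot\boldsymbol n_i\equiv 0$ on every face, so $\boldsymbol z\in H_0({\rm div},K)$. Combining this with the interior moment condition, tested first against gradients in $\mathcal P_r(K)$ (to handle ${\rm div}\,\boldsymbol z$ after integration by parts) and then against general elements of $\boldsymbol{\mathcal P_{r-1}(K)}$, yields $\boldsymbol z=0$ by the standard RT argument.

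For the quantitative bound, I would pass to a reference simplex $\hat K$ of unit diameter by an affine map $F_K(\hat x)=B\hat x+b$ with $\|B\|\simeq h_K$, and define $\hat{\boldsymbol z}$ through the contravariant Piola transform $\boldsymbol z(x)=\frac{1}{\det B}\,B\,\hat{\boldsymbol z}(\hat x)$. This transform maps $\boldsymbol{\mathcal P_r(K)}\oplus\boldsymbol x\mathcal P_r(K)$ bijectively onto its analogue on $\hat K$ and converts the interior and normal moment conditions on $K$ into analogous conditions on $\hat K$ with transformed data $(\hat{\boldsymbol q}^*,\hat\zeta_i^*)$. The resulting solution operator $(\hat{\boldsymbol q}^*,\hat\zeta_i^*)\mapsto\hat{\boldsymbol z}$ is linear between finite-dimensional spaces and is bounded by the uniqueness established above, so
\begin{equation*}
\|\hat{\boldsymbol z}\|_{0,\hat K}\lesssim\|\hat{\boldsymbol q}^*\|_{0,\hat K}+\sum_{i=1}^{d+1}\|\hat\zeta_i^*\|_{0,\hat e_i},
\end{equation*}
with a constant depending only on $d$ and $r$ (and the shape regularity). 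Scaling back to $K$ using the standard identities $\|\boldsymbol v\|_{0,K}^2\simeq h_K^{2-d}\|\hat{\boldsymbol v}\|_{0,\hat K}^2$ on the element and $\|\phi\|_{0,e}^2\simeq h_K^{d-1}\|\hat\phi\|_{0,\hat e}^2$ on a face then produces the claimed inequality; the $h_K^{1/2}$ weight on the face data arises precisely from the mismatch between these two scalings.

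The main obstacle is not conceptual but purely technical: one must track the Piola Jacobians carefully so that the exponent on $h_K$ in the boundary contribution comes out exactly to $1/2$, and verify that the transformed problem on $\hat K$ has exactly the same algebraic structure so that the finite-dimensional uniqueness of step two can be invoked verbatim. Everything else follows from standard properties of the Raviart-Thomas element.
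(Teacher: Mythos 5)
Your proposal is correct and follows exactly the same route as the paper, whose entire proof is the two-sentence remark that well-posedness is ``similar to the definition of the local Raviart--Thomas finite element'' and that the bound follows ``from a simple scaling argument.'' You have simply filled in the standard details (dimension count, RT unisolvence via vanishing normal traces and interior moments, and the Piola-based scaling that produces the $h_K^{1/2}$ face weight), all of which check out.
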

\begin{proof}
Similar to the definition of the local Raviart-Thomas finite element, the well-posedness of $\boldsymbol z$ is obvious. 
Then, from a simple scaling argument, the estimate is desired.
\end{proof}

Let $\boldsymbol {\mathcal {\tilde P}_{r-1}(K)}=(\mathcal {\tilde P}_{r-1}(K))^d$ be the vector homogeneous polynomials of degree $r-1$. Similar to Lemma \ref{RT:jump}, we also have: 
\begin{lemma}\label{BDM:jump}
Given the edges (faces) $e_1, e_2, \cdots, e_{d+1}$ of the simplex $K$ and functions $\boldsymbol q\in \boldsymbol{L^2(K)}$ and~$\hat {\zeta}_i\in L^2(e_i), i=1,\cdots,d+1$, there is a unique function $\boldsymbol z\in \boldsymbol{\mathcal P_r(K)}, r\ge 1$ such that,  
\begin{eqnarray}
&&(\boldsymbol z-\boldsymbol q, \boldsymbol p)_K=0, ~~\forall~ \boldsymbol p\in \boldsymbol{\mathcal P_{r-2}(K)} \oplus \boldsymbol{\mathcal  S_{r-1}(K)},\\
&&\langle\boldsymbol z\cdot \boldsymbol n_i-\hat {\zeta}_i, \hat v\rangle_{e_i}=0,~~\forall~ \hat v\in \mathcal P_{r}(e_i), i=1,\cdots, d+1,
\end{eqnarray}
\end{lemma}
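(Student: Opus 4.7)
The plan is to mimic the proof of Lemma \ref{RT:jump}, but adapted to the Brezzi-Douglas-Marini (BDM)-type degrees of freedom. The key observation is that the two conditions in the statement are precisely the local BDM$_r$ degrees of freedom applied to the ``defect'' $\boldsymbol z - \boldsymbol z^\ast$, where $\boldsymbol z^\ast$ is any fixed representative with the prescribed data; hence the lemma is essentially the classical unisolvence result for the BDM element, plus a stability (scaling) estimate analogous to the one in Lemma \ref{RT:jump}.

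First, I would recall the standard dimension count: on a $d$-simplex $K$, one has the decomposition
\begin{equation*}
\boldsymbol{\mathcal P_r(K)} = \nabla \mathcal P_{r+1}(K) \oplus \boldsymbol{\mathcal S_{r-1}(K)} \oplus (\text{bubbles with zero normal trace}),
\end{equation*}
and the chosen test spaces $\boldsymbol{\mathcal P_{r-2}(K)} \oplus \boldsymbol{\mathcal S_{r-1}(K)}$ (interior moments) together with $\mathcal P_r(e_i)$ on each of the $d+1$ faces exhaust $\dim \boldsymbol{\mathcal P_r(K)}$. Since the map
\begin{equation*}
\boldsymbol z \longmapsto \Bigl( (\boldsymbol z, \cdot)_K \big|_{\boldsymbol{\mathcal P_{r-2}} \oplus \boldsymbol{\mathcal S_{r-1}}}, \langle \boldsymbol z \cdot \boldsymbol n_1, \cdot\rangle_{e_1}, \ldots, \langle \boldsymbol z\cdot \boldsymbol n_{d+1}, \cdot\rangle_{e_{d+1}} \Bigr)
\end{equation*}
is linear between spaces of equal finite dimension, existence and uniqueness reduce to injectivity: it suffices to show that if $\boldsymbol q = 0$ and $\hat\zeta_i = 0$ then $\boldsymbol z = 0$.

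Second, I would establish this injectivity. The face condition with $\hat\zeta_i = 0$ gives $\langle \boldsymbol z \cdot \boldsymbol n_i, \hat v\rangle_{e_i} = 0$ for every $\hat v \in \mathcal P_r(e_i)$; since $\boldsymbol z \cdot \boldsymbol n_i|_{e_i}$ itself lies in $\mathcal P_r(e_i)$, we may choose $\hat v = \boldsymbol z \cdot \boldsymbol n_i$, forcing $\boldsymbol z \cdot \boldsymbol n|_{\partial K} = 0$. Combined with the interior moment condition against $\boldsymbol{\mathcal P_{r-2}(K)} \oplus \boldsymbol{\mathcal S_{r-1}(K)}$, the classical BDM unisolvence argument (see Brezzi-Fortin or Boffi-Brezzi-Fortin) yields $\boldsymbol z = 0$; the main ingredient is the decomposition of $\boldsymbol{\mathcal P_r(K)}$ into gradients of $\mathcal P_{r+1}$-functions, elements of $\boldsymbol{\mathcal S_{r-1}}$, and normal-trace-free bubbles, together with an integration by parts that turns the vanishing of $\boldsymbol z \cdot \boldsymbol n$ on $\partial K$ into the orthogonality of $\boldsymbol z$ to the gradient components.

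Finally, for the quantitative estimate (analogous to the bound stated for the RT case), I would pull back to a reference simplex $\hat K$ via the Piola transformation, where the map from data $(\boldsymbol q, \hat\zeta_1, \ldots, \hat\zeta_{d+1})$ to $\boldsymbol z$ is a continuous linear map between finite-dimensional spaces, and then scale back to $K$; this produces a bound of the form
\begin{equation*}
\|\boldsymbol z\|_{0,K} \leq \tilde C_{d,r}\Bigl(\|\boldsymbol q\|_{0,K} + h_K^{1/2}\sum_{i=1}^{d+1}\|\hat\zeta_i\|_{0,e_i}\Bigr),
\end{equation*}
with $\tilde C_{d,r}$ depending only on $d$, $r$, and the shape-regularity constant. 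The main obstacle is verifying injectivity in the BDM setting, because the test space $\boldsymbol{\mathcal P_{r-2}} \oplus \boldsymbol{\mathcal S_{r-1}}$ is not symmetric in the way the RT interior moments are; the scaling step is then a routine consequence of equivalence of norms on finite-dimensional spaces and the Piola-transform identities $\|\boldsymbol z\|_{0,K} \simeq h_K^{d/2}\|\hat{\boldsymbol z}\|_{0,\hat K}$ and $\|\hat\zeta_i\|_{0,\hat e_i} \simeq h_K^{-(d-1)/2}\|\zeta_i\|_{0,e_i}$.
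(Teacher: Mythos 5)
Your proposal is correct and follows essentially the same route as the paper, which simply appeals to the unisolvence of the local Brezzi--Douglas--Marini degrees of freedom and a scaling argument; you have merely filled in the standard details (dimension count, reduction to injectivity, vanishing of $\boldsymbol z\cdot\boldsymbol n$ on $\partial K$, and the Piola-transform scaling). No gap.
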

where $\boldsymbol n_i$ is the outward normal unit vector of $e_i$ and $\boldsymbol{\mathcal  S_{r-1}(K)}= \left\{ \boldsymbol v \in  \boldsymbol {\mathcal {\tilde P}_{r-1}(K)}
 : \boldsymbol x \cdot \boldsymbol v = 0 \right\}$. Moreover:
\begin{equation}\displaystyle
\|\boldsymbol z\|_{0,K}\leq C_{d,r}\Big(\|\boldsymbol q\|_{0,K}+h_K^{1/2}\sum_{i=1}^{d+1}\|\hat {\zeta}_i\|_{0,e_i}\Big),
\end{equation}
where $C_{d,r}$ depends only on $d,r$, and the shape regular constant. 

\begin{proof}
Similar to the definition of the local Brezzi-Douglas-Marini $(BDM)$ finite element, the well-posedness of $\boldsymbol z$ is obvious. 
From a simple scaling argument, the estimate is desired.
\end{proof}

Now we consider the inf-sup condition of $b_h(\boldsymbol q_h,\tilde u_h)$.

\begin{theorem}\label{uniform:inf-sup:bh}
For $k\ge 1$, assume that $\boldsymbol Q_h=\boldsymbol Q_h^k, V_h=V_h^{k-1}$ and $\hat V_h=\hat V_h^r$, 
where $0\le r\le k$, or $\boldsymbol Q_h=\boldsymbol Q_h^{k,RT}, V_h=V_h^{k-1}$ and 
$\hat V_h=\hat V_h^r,$ where $0\le r\le k-1$, then we have:
\begin{equation}
 \inf_{\tilde u_h\in \tilde V_h}\sup_{\boldsymbol{q}_h\in \boldsymbol{Q}_h}   \frac{b_h(\boldsymbol q_h,\tilde u_h)}{\|\boldsymbol q_h\|_{{\rm div},\rho,h}\|\tilde u_h\|_{0,\rho,h}}    \geq \beta_2,
\end{equation}
where $\beta_2>0$ is a constant independent of $\rho$ and mesh size $h$.
\end{theorem}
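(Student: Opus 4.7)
Given $\tilde u_h=(u_h,\hat u_h)\in\tilde V_h$, the plan is to test against $\boldsymbol q_h=\boldsymbol q_h^{(1)}+\delta\,\boldsymbol q_h^{(2)}$ for a small absolute constant $\delta>0$ to be fixed, with $\boldsymbol q_h^{(1)}$ capturing the $\|u_h\|$-part of the norm and $\boldsymbol q_h^{(2)}$ designed to pick up the boundary $\hat u_h$-contribution. The first piece is supplied by the classical inf-sup of the $H({\rm div})$-conforming pair $(V_h^{k-1},\boldsymbol Q_h\cap H({\rm div},\Omega))$: choose $\boldsymbol q_h^{(1)}\in\boldsymbol Q_h\cap H({\rm div},\Omega)$ with ${\rm div}\,\boldsymbol q_h^{(1)}=-u_h$ and $\|\boldsymbol q_h^{(1)}\|_{H({\rm div})}\lesssim\|u_h\|$. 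Since its jumps vanish, $\|\boldsymbol q_h^{(1)}\|_{{\rm div},\rho,h}\lesssim\|u_h\|$ uniformly in $\rho$, and by \eqref{Identity:bh}, $b_h(\boldsymbol q_h^{(1)},\tilde u_h)=\|u_h\|^2$.

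The second piece $\boldsymbol q_h^{(2)}$ is built element-by-element using Lemma~\ref{RT:jump} (RT case) or Lemma~\ref{BDM:jump} (BDM case), applied with $\boldsymbol q=\boldsymbol 0$ and boundary data $\hat\zeta_i=\tfrac12\rho h_{e_i}\hat u_h|_{e_i}$ on interior faces (and $\hat\zeta_i=0$ on $\partial\Omega$-faces). Under the degree hypothesis of the theorem, $\hat u_h|_e\in\hat V(e)\subset \mathcal P_k(e)$ lies in the moment-test space of the lemma, so the boundary-moment condition forces $\boldsymbol q_h^{(2)}\cdot\boldsymbol n_K|_{e_i}=\tfrac12\rho h_{e_i}\hat u_h$ exactly; summing the contributions from the two sides of an interior edge yields $\hat P_e([\boldsymbol q_h^{(2)}])=\rho h_e\hat u_h$. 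The $L^2$ bound in the lemma, an inverse inequality applied to ${\rm div}\,\boldsymbol q_h^{(2)}$ elementwise, shape regularity, and $\rho\le 1$ together imply
\begin{equation*}
\|\boldsymbol q_h^{(2)}\|_{{\rm div},\rho,h}^2\;\lesssim\;\rho\sum_{e\in\mathcal E_h^i} h_e\|\hat u_h\|_{0,e}^2,
\end{equation*}
while \eqref{Identity:bh} gives $b_h(\boldsymbol q_h^{(2)},\tilde u_h)=-(u_h,{\rm div}\,\boldsymbol q_h^{(2)})_{\mathcal T_h}+\rho\sum_{e\in\mathcal E_h^i} h_e\|\hat u_h\|_{0,e}^2$.

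Combining both pieces, $b_h(\boldsymbol q_h^{(1)}+\delta\boldsymbol q_h^{(2)},\tilde u_h)=\|u_h\|^2-\delta(u_h,{\rm div}\,\boldsymbol q_h^{(2)})_{\mathcal T_h}+\delta\rho\sum_e h_e\|\hat u_h\|_{0,e}^2$. The cross term is controlled by Cauchy--Schwarz together with the $\|{\rm div}\,\boldsymbol q_h^{(2)}\|$ estimate from the previous paragraph; Young's inequality and a $\delta$ sufficiently small (independent of $\rho$ and $h$) leave
\begin{equation*}
b_h(\boldsymbol q_h^{(1)}+\delta\boldsymbol q_h^{(2)},\tilde u_h)\;\gtrsim\;\|u_h\|^2+\rho\sum_e h_e\|\hat u_h\|_{0,e}^2\;=\;\|\tilde u_h\|_{0,\rho,h}^2,
\end{equation*}
while $\|\boldsymbol q_h^{(1)}+\delta\boldsymbol q_h^{(2)}\|_{{\rm div},\rho,h}\lesssim\|\tilde u_h\|_{0,\rho,h}$ follows by adding the two separate bounds. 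This delivers $\beta_2$ independent of $\rho$ and $h$. The delicate step, and the main obstacle to uniformity, is the weighting $\tfrac12\rho h_e$ of the boundary data: with any other scaling, either the $b_h$-gain from the jump would fail to match the $\rho h_e$ factor appearing in the $\hat u_h$-part of $\|\tilde u_h\|_{0,\rho,h}^2$, or the $L^2$ bound coming from Lemma~\ref{RT:jump}/\ref{BDM:jump} would produce a $\rho$-dependent constant in $\|\boldsymbol q_h^{(2)}\|_{{\rm div},\rho,h}$; the chosen weight is precisely the one that balances these two competing scalings simultaneously.
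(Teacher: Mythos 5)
Your construction is correct and essentially the same as the paper's: both test with a sum of an $H(\mathrm{div})$-conforming lift whose divergence recovers $-u_h$ and a local RT/BDM-type lift (Lemmas \ref{RT:jump}, \ref{BDM:jump}) of the boundary data $\tfrac12\rho h_{e}\hat u_h$, which is exactly the scaling the paper uses, and the resulting norm bounds are identical. The only difference is bookkeeping: the paper prescribes $\mathrm{div}\,\boldsymbol r_h=-\mathrm{div}_h\boldsymbol z_h-u_h$ so the total divergence equals $-u_h$ and $b_h(\boldsymbol q_h,\tilde u_h)=\|\tilde u_h\|_{0,\rho,h}^2$ exactly with no cross term, whereas you keep $\mathrm{div}\,\boldsymbol q_h^{(1)}=-u_h$ and absorb the cross term $-(u_h,\mathrm{div}\,\boldsymbol q_h^{(2)})_{\mathcal T_h}$ via Young's inequality with a small absolute $\delta$ — a harmless variant that yields the same uniform constant.
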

\begin{proof}
Here we only give the proof under the assumption $\boldsymbol Q_h=\boldsymbol Q_h^k, V_h=V_h^{k-1}$ and $\hat V_h=\hat V_h^r$. The other case is similar. 

For any $\tilde u_h\in \tilde V_h$, namely for any $u_h\in V_h, \hat u_h\in \hat V_h$, we need to 
construct a $\boldsymbol q_h\in \boldsymbol Q_h$,  such that: 
\begin{equation}
b_h(\boldsymbol q_h,\tilde u_h)=\|\tilde u_h\|^2_{0,\rho,h}~~\hbox{and}~~
\|\boldsymbol q_h\|_{{\rm div},\rho,h}\lesssim \|\tilde u_h\|_{0,\rho,h}.
\end{equation}
We define $\boldsymbol z_h$ piecewisely on any $K$, namely $\boldsymbol z_h\in Q_h, \boldsymbol z_h|_K=\boldsymbol z_K$ and $\boldsymbol z_K\in \boldsymbol{\mathcal P_r(K)}$ is defined as follows 
\begin{eqnarray}
&&(\boldsymbol z_K, \boldsymbol p)_K=0, ~~\forall~ \boldsymbol p\in \boldsymbol{\mathcal P_{r-2}(K)} \oplus \boldsymbol{\mathcal  S_{r-1}(K)},\\
&&\langle\boldsymbol z_K\cdot \boldsymbol n_i-\frac{\rho h_{e_i}\hat u_h}{2}, \hat v\rangle_{e_i}=0,~~\forall~ \hat v\in \mathcal P_r(e_i), i=1,\cdots, d+1.
\end{eqnarray}
Then, by Lemma \ref{BDM:jump}, we have: 
\begin{equation}\label{bound:Z_h}
\|\boldsymbol z_h\|^2\lesssim \rho^2 h_K h^2_e\sum_{e\in \mathcal{E}_h^i}\|\hat u_h\|^2_{0,e}.
\end{equation}
In fact, we also have that for any $e\in \mathcal{E}_h^i$,
\begin{equation} \label{eq:z_hjump}
[\boldsymbol z_h]|_e=\rho h_e\hat u_h|_e.
\end{equation}
Next, noting that ${\rm div} \boldsymbol z_K\in \mathcal P_{r-1}(K)$, then for $-div_h\boldsymbol z_h-u_h$, 
there exists $\boldsymbol r_h\in H({\rm div}, \Omega)\cap \boldsymbol Q_h$ such that:
\begin{eqnarray}\label{bound:r_h}
&&{\rm div} \boldsymbol r_h=-{\rm div}_h\boldsymbol z_h-u_h,\\
&&\|\boldsymbol r_h\|+\|{\rm div} \boldsymbol r_h\|\lesssim \|-{\rm div}_h\boldsymbol z_h-u_h\|.
\end{eqnarray}
Now we define $\boldsymbol q_h=\boldsymbol z_h+\boldsymbol r_h$, noting that 
$\boldsymbol r_h\in H({\rm div}, \Omega)\cap \boldsymbol Q_h $, namely for any 
$e\in \mathcal{E}_h^i, [\boldsymbol r_h]|_e=0$; hence, for any $e\in \mathcal{E}_h^i$:
\begin{equation}\label{eq:jumpq_h}
[\boldsymbol q_h]|_e=[\boldsymbol r_h] |_e+[\boldsymbol z_h]|_e=[\boldsymbol z_h]|_e=\rho h_e\hat u_h|_e 
\end{equation}
and 
\begin{equation}\label{eq:div_hq_h}
{\rm div}_h \boldsymbol q_h={\rm div}_h (\boldsymbol z_h+\boldsymbol r_h)=-u_h.
\end{equation}
Substituting \eqref{eq:jumpq_h} and \eqref{eq:div_hq_h} into $b_h(\boldsymbol q_h,\tilde u_h)$, 
we immediately obtain:
$$
b_h(\boldsymbol q_h,\tilde u_h)=-(u_h, {\rm div}_h \boldsymbol{q}_h)_{\mathcal T_h}+\sum\limits_{e\in \mathcal{E}_h^i}\langle\hat{u}_h,[\boldsymbol{q}_h]\rangle_e=\|\tilde u_h\|^2_{0,\rho,h}.
$$
Finally, by \eqref{bound:Z_h}, \eqref{bound:r_h}, inverse inequality, and \eqref{eq:z_hjump}, 
noting that for any $e\in \mathcal{E}_h^i, [\boldsymbol r_h]|_e=0$, we have:
\begin{eqnarray}
\nonumber\|\boldsymbol q_h\|_{{\rm div},\rho,h}&=&\|\boldsymbol r_h+\boldsymbol z_h\|_{\rm Div} \leq\|\boldsymbol r_h\|_{\rm div}+\|\boldsymbol z_h\|_{\rm div}\leq\|\boldsymbol r_h\|+\|{\rm div} \boldsymbol r_h\|+\|\boldsymbol z_h\|_{\rm div}\\
&\lesssim&  \|-{\rm div}_h\boldsymbol z_h-u_h\|+\|\boldsymbol z_h\|+ \|{\rm div}_h\boldsymbol z_h\|+\Big(\rho^{-1}\sum_{e\in \mathcal {E}_h^i}h^{-1}_e\langle\hat P_e ([\boldsymbol z_h]),\hat P_e ([\boldsymbol z_h])\rangle_e\Big)^{1/2}\\
&\lesssim& \|u_h\|+(1+h^{-1})\|\boldsymbol z_h\|+\Big(\rho^{-1}\sum_{e\in \mathcal {E}_h^i}h^{-1}_e\langle[\boldsymbol z_h],[\boldsymbol z_h]\rangle_e\Big)^{1/2}\\
&=& \|u_h\|+(1+h^{-1})\Big( h\sum_{e\in \mathcal{E}_h^i}\rho^2h^2_e\|\hat u_h\|^2_{0,e}\Big)^{1/2}+\Big(\rho^{-1}\sum_{e\in \mathcal {E}_h^i}h^{-1}_e\langle\rho h_e \hat u_h,\rho h_e \hat u_h\rangle_e\Big)^{1/2}\\
&\lesssim& \|u_h\|+\Big(\rho\sum_{e\in \mathcal {E}_h^i}h_e\langle \hat u_h, \hat u_h\rangle_e\Big)^{1/2}\lesssim \|\tilde u_h\|_{0,\rho,h}.
\end{eqnarray}
Therefore, we obtain the proof.
\end{proof}
\begin{remark}
In fact, we also can choose $\tau=0$ in \eqref{definition:ahbhch} and for any $\tilde v\in \tilde V_h$, we define:
$$
\|\tilde v_h\|_{0, \rho,h}^2=(v_h,v_h)_{\mathcal T_h}+\sum\limits_{e\in \mathcal{E}^i_h}h_e\langle\hat v_h,\hat v_h\rangle_e.
$$
Define norms for $\boldsymbol{p}_h\in \boldsymbol{Q}_h$  as follows:
$$
\|\boldsymbol{p}_h\|_{{\rm div},\rho,h}^2=(c \boldsymbol{p}_h, \boldsymbol{p}_h)_{\mathcal T_h}+( {\rm div}\boldsymbol{p}_h, {\rm div} \boldsymbol{p}_h)_{\mathcal T_h}+\sum\limits_{e\in \mathcal{E}^i_h}h_e^{-1} \langle\hat P_e([\boldsymbol{p}_h]),\hat P_e([\boldsymbol{p}_h])\rangle_e,
$$
where $\hat P_e: L^2(e)\rightarrow \hat V(e)$ still is the $L^2$ projection.
Then, we can get the stability result by a similar proof.
\end{remark}

\subsection{Proof for Part  \ref{wellposed:Ah:grad} of Theorem \ref{wellposed:Ah:divgrad}  }
Next, we prove part \ref{wellposed:Ah:grad} of Theorem \ref{wellposed:Ah:divgrad}.
The uniform boundedness of $A_h((\boldsymbol{p}_h, \tilde u_h),(\boldsymbol{q}_h, \tilde v_h))$ is obvious. %Set $S_h=\{s_h\in H^{1}_0(\Omega): s_h|_K\in 
%%P_k(K)
%S(K)\}$, where $S(K)$ is chosen such that $S(K)\subset V(K)$ and for any $s_h\in S_h$, $\{s_h\}|_e\in \hat V(e)$. Denotes $\boldsymbol S_h^{\perp}=\{\boldsymbol{p}_h\in \boldsymbol{Q}_h: (c\boldsymbol{p}_h, \nabla s_h)=0, \forall s_h \in S_h\}$.
%The main result is as follows:
The uniform inf-sup condition for $A_h((\boldsymbol{p}_h, \tilde u_h),(\boldsymbol{q}_h, \tilde v_h))$ is as follows:
\begin{theorem}
Assume $\nabla_h V_h \subset \boldsymbol Q_h$, then there exists a positive constant $\rho_0$ that 
only depends on the shape regularity of the mesh, such that for any $0<\rho \leq\rho_0$, we have:
\begin{equation}
 \inf_{(\boldsymbol{p}_h, \tilde u_h)\in \boldsymbol{Q}_h\times \tilde V_h}\sup_{\boldsymbol{q}_h, \tilde v_h\in \boldsymbol{Q}_h\times \tilde V_h}   \frac{A_h((\boldsymbol{p}_h, \tilde u_h),(\boldsymbol{q}_h, \tilde v_h))}{(\|{\tilde u}_h\|_{\tilde 1,\rho,h}+\|\boldsymbol{p}_h\|)(\|{\tilde v}_h\|_{\tilde 1,\rho,h}+\|\boldsymbol{q}_h\|)}    \geq \beta_3,
\end{equation}
where $\beta_3>0$ is a constant independent of $\rho$ and mesh size $h$.
\end{theorem}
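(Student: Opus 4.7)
The standard Brezzi strategy via kernel coercivity is unavailable here because $c_h$ has the wrong sign when $\tau = \rho^{-1} h_K^{-1}$; instead I will verify the inf-sup condition by explicit construction of test functions, of the form
\[
\boldsymbol{q}_h = \boldsymbol{p}_h + \delta \, \nabla_h u_h, \qquad \tilde v_h = -\tilde u_h,
\]
where $\delta \in (0,1)$ is a small absolute constant to be chosen. Note that $\nabla_h u_h \in \boldsymbol{Q}_h$ is exactly what the hypothesis $\nabla_h V_h \subset \boldsymbol{Q}_h$ gives, so this choice is admissible. The continuity $\|\boldsymbol{q}_h\| + \|\tilde v_h\|_{\tilde 1,\rho,h} \lesssim \|\boldsymbol{p}_h\| + \|\tilde u_h\|_{\tilde 1,\rho,h}$ is immediate, since $\delta\|\nabla_h u_h\| \leq \|\tilde u_h\|_{\tilde 1,\rho,h}$.

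The main computation is to expand $A_h((\boldsymbol{p}_h,\tilde u_h),(\boldsymbol{q}_h,\tilde v_h))$. First I would rewrite $b_h$ using elementwise integration by parts,
\[
b_h(\boldsymbol{q},\tilde w) = (\nabla_h w_h, \boldsymbol{q})_{\mathcal T_h} - \langle w_h - \hat w_h, \boldsymbol{q}\cdot \boldsymbol n\rangle_{\partial \mathcal T_h},
\]
so that the contributions $b_h(\boldsymbol{p}_h,\tilde v_h) = -b_h(\boldsymbol{p}_h,\tilde u_h)$ cancel the term $b_h(\boldsymbol{p}_h,\tilde u_h)$ inside $b_h(\boldsymbol{q}_h,\tilde u_h)$. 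What remains is
\[
A_h = \|\boldsymbol{p}_h\|^2 + \delta(c\boldsymbol{p}_h,\nabla_h u_h)_{\mathcal T_h} + \delta \|\nabla_h u_h\|^2 - \delta \langle u_h - \hat u_h, \nabla_h u_h \cdot \boldsymbol n\rangle_{\partial \mathcal T_h} + \rho^{-1}\!\sum_{K\in \mathcal T_h}\! h_K^{-1}\langle u_h-\hat u_h, u_h-\hat u_h\rangle_{\partial K}.
\]
The first cross term is controlled by Young's inequality as $|\delta(c\boldsymbol{p}_h,\nabla_h u_h)| \le \tfrac12 \|\boldsymbol{p}_h\|^2 + \tfrac{\delta^2}{2}C_c^2 \|\nabla_h u_h\|^2$. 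The boundary cross term requires the discrete trace inequality $\|\nabla u_h \cdot \boldsymbol n\|_{0,\partial K} \lesssim h_K^{-1/2}\|\nabla u_h\|_{0,K}$: combined with Cauchy–Schwarz and Young's inequality with parameter $\varepsilon$,
\[
\bigl|\delta \langle u_h-\hat u_h, \nabla_h u_h\cdot \boldsymbol n\rangle_{\partial \mathcal T_h}\bigr| \le \frac{\delta^2 \rho C_t^2}{2\varepsilon}\|\nabla_h u_h\|^2 + \frac{\varepsilon}{2}\rho^{-1}\sum_{K} h_K^{-1}\|u_h - \hat u_h\|_{0,\partial K}^2.
\]

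Choosing $\varepsilon = 1$ absorbs half of the stabilization term, leaving
\[
A_h \ge \tfrac12\|\boldsymbol{p}_h\|^2 + \bigl(\delta - \tfrac{\delta^2}{2}(C_c^2 + \rho C_t^2)\bigr)\|\nabla_h u_h\|^2 + \tfrac12 \rho^{-1}\sum_{K} h_K^{-1}\|u_h-\hat u_h\|_{0,\partial K}^2.
\]
The decisive step is to fix $\rho_0$ and $\delta$: taking $\rho_0$ such that $\rho_0 C_t^2 \le 1$ and then $\delta = 1/(2(C_c^2 + 1))$ makes the coefficient of $\|\nabla_h u_h\|^2$ at least $\delta/2 > 0$, uniformly in $\rho \in (0,\rho_0]$. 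Combining everything gives $A_h \gtrsim \|\boldsymbol{p}_h\|^2 + \|\tilde u_h\|_{\tilde 1,\rho,h}^2$, which after dividing by $\|\boldsymbol{q}_h\|+\|\tilde v_h\|_{\tilde 1,\rho,h}$ yields the desired uniform inf-sup constant $\beta_3 > 0$.

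The principal obstacle I anticipate is precisely this juggling of constants: the two terms $-\delta\langle u_h-\hat u_h,\nabla_h u_h \cdot \boldsymbol n\rangle_{\partial \mathcal T_h}$ and $\rho^{-1} h_K^{-1}\|u_h-\hat u_h\|^2_{\partial K}$ live on mismatched scales, and the $\rho^{-1/2}$ weight from the stabilization must be spent on $\|u_h - \hat u_h\|$, leaving $\rho^{1/2}$ on $\|\nabla_h u_h \cdot \boldsymbol n\|$. The discrete trace inequality absorbs an $h_K^{-1/2}$, so the surviving coefficient on $\|\nabla_h u_h\|^2$ is proportional to $\delta^2 \rho$; this is harmless only once we cap $\rho$ by an absolute $\rho_0$, which is why the theorem cannot hold for all $\rho > 0$.
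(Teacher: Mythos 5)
Your proposal is correct and follows essentially the same route as the paper's proof: the same test functions $\boldsymbol{q}_h=\boldsymbol{p}_h+\mathrm{const}\cdot\nabla_h u_h$, $\tilde v_h=-\tilde u_h$ (the paper takes the constant equal to $1$ and instead tunes the Young parameters and $\rho_0$), the same cancellation of the $b_h(\boldsymbol{p}_h,\cdot)$ terms, and the same trace/inverse-inequality plus Young's-inequality balancing of the boundary cross term against the stabilization term. The only differences are cosmetic bookkeeping of where the small parameter sits.
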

\begin{proof}
%We just consider %$k=1$ and 
%$c=1$ first since we are focusing on the the case when $\tau$ goes to infinity. 
For any given $(\boldsymbol{p}_h, \tilde u_h)\in \boldsymbol{Q}_h\times \tilde V_h$, since 
$\nabla_h V_h \subset \boldsymbol Q_h$, we choose $\boldsymbol{q}_{h}=\boldsymbol{p}_{h}+\nabla_h u_h$ 
and $\tilde v_h=-\tilde u_h$,
and then we have the following boundedness of $\boldsymbol q_h$ and $\tilde v_h$ by $\boldsymbol q_h$ and $\tilde v_h$
\begin{equation}\label{boundedqh}
\|\boldsymbol{q}_{h}\|^2=(c(\boldsymbol{p}_{h}+\nabla_h u_h),\boldsymbol{p}_{h}+\nabla_h u_h)_{\mathcal T_h}\leq 2\big(\|\boldsymbol{p}_{h}\|^2+\|\nabla_h u_h\|^2\big),
\end{equation}
and
\begin{equation}
\|\tilde{v}_h\|_{\tilde 1,\rho,h}=\|-\tilde{u}_h\|_{\tilde 1,\rho,h}=\|\tilde{u}_h\|_{\tilde 1,\rho,h}.
\end{equation}
On the other hand, 
\begin{equation*}
\begin{split}
&A_h((\boldsymbol{p}_h, \tilde u_h),(\boldsymbol{q}_h, \tilde v_h))\\
&=(c\boldsymbol{p}_h, \boldsymbol{q}_h)_{\mathcal T_h}+(\nabla_h u_h, \boldsymbol{q}_h)_{\mathcal T_h}-\langle u_h-\hat{u}_h, \boldsymbol{q}_h\cdot\boldsymbol{n}\rangle_{\partial {\mathcal T_h}}\\
&+(\nabla_h v_h, \boldsymbol{p}_h)_{\mathcal T_h}-\langle v_h-\hat{v}_h, \boldsymbol{p}_h\cdot\boldsymbol{n}\rangle_{\partial {\mathcal T_h}}
-\rho^{-1}\sum\limits_{K\in \mathcal{T}_h}h_K^{-1}\langle u_h-\hat{u}_h, v_h-\hat{v}_h\rangle_{\partial K}\\
&=(c\boldsymbol{p}_h, \boldsymbol{p}_h+\nabla_h u_h)_{\mathcal T_h}+(\nabla_h u_h, \boldsymbol{p}_h+\nabla_h u_h)_{\mathcal T_h}-\langle u_h-\hat{u}_h, \boldsymbol{q}_h\cdot\boldsymbol{n}\rangle_{\partial {\mathcal T_h}}\\
&+(-\nabla_h u_h, \boldsymbol{p}_h)_{\mathcal T_h}-\langle -(u_h-\hat{u}_h), \boldsymbol{p}_h\cdot\boldsymbol{n}\rangle_{\partial {\mathcal T_h}}
+\rho^{-1}\sum\limits_{K\in \mathcal{T}_h}h_K^{-1}\langle u_h-\hat{u}_h, u_h-\hat{u}_h\rangle_{\partial K}\\
&=(c\boldsymbol{p}_h, \boldsymbol{p}_h)_{\mathcal T_h}+(c\boldsymbol{p}_h, \nabla_h u_h)_{\mathcal T_h}+(\nabla_h u_h, \nabla_h u_h)_{\mathcal T_h}
+\sum\limits_{K\in \mathcal{T}_h}\langle (u_h-\hat{u}_h), (\boldsymbol{p}_h-\boldsymbol{q}_h)\cdot\boldsymbol{n}_K\rangle_{\partial K}\\
&+\rho^{-1}\sum\limits_{K\in \mathcal{T}_h}h_K^{-1}\langle u_h-\hat{u}_h, u_h-\hat{u}_h\rangle_{\partial K}\\
&\geq \frac{1}{2} \big(\|\boldsymbol{p}_h\|^2+\|\nabla_h u_h\|^2\big)-\epsilon C_5\|\boldsymbol{p}_h-\boldsymbol{q}_h\|^2+(\rho^{-1}-\epsilon^{-1}) \sum\limits_{K\in \mathcal{T}_h}h_K^{-1}\langle u_h-\hat{u}_h, u_h-\hat{u}_h\rangle_{\partial K}\\
&= \frac{1}{2} \big(\|\boldsymbol{p}_h\|^2+\|\nabla_h u_h\|^2\big)-\epsilon C_5\|\nabla_h u_h\|^2+(\rho^{-1}-\epsilon^{-1}) \sum\limits_{K\in \mathcal{T}_h}h_K^{-1}\langle u_h-\hat{u}_h, u_h-\hat{u}_h\rangle_{\partial K}.
\end{split}
\end{equation*}
where $C_5$ is a constant independent of $\rho$ and $h$.

Now setting $\epsilon=\frac{1}{4C_5}, \rho_0= \frac{3}{16C_5}$, then for any $\rho\leq \rho_0$, we have:
\begin{equation*}
\begin{split}
A_h((\boldsymbol{p}_h, \tilde u_h),(\boldsymbol{q}_h, \tilde v_h))
&\geq \frac{1}{2} \|\boldsymbol{p}_h\|^2+(\frac{1}{2}-\epsilon C_5)\|\nabla_h u_h\|^2+\rho^{-1}(1-\rho\epsilon^{-1}) \sum\limits_{K\in \mathcal{T}_h}h_K^{-1}\langle u_h-\hat{u}_h, u_h-\hat{u}_h\rangle_{\partial K}\\
&\geq \frac{1}{2} \|\boldsymbol{p}_h\|^2+\frac{1}{4}\|\nabla_h u_h\|^2+\frac{1}{4} \rho^{-1}\sum\limits_{K\in \mathcal{T}_h}h_K^{-1}\langle u_h-\hat{u}_h, u_h-\hat{u}_h\rangle_{\partial K}\\
&\geq \frac{1}{4} \big(\|\boldsymbol{p}_h\|^2+\|\tilde u_h\|^2_{\tilde 1,\rho,h}\big).
\end{split}
\end{equation*}
Hereby, we complete the proof.
\end{proof}

\subsection{Proof for Part \ref{wellposed:Aw:grad} of Theorem \ref{wellposed:Aw:graddiv}}

By the definition of the norms, the continuity and coercivity of $a_w(\cdot,\cdot)$ is obvious, namely, 
\begin{theorem}\label{bounded:coercivity:aw}
For any $0<\rho\le 1$, we have:
$$
|a_w(\boldsymbol{\tilde p}_h,\boldsymbol{\tilde q}_h)| \leq \|\boldsymbol{\tilde p}_h\|_{0,h,\rho} \|\boldsymbol{\tilde q}_h\|_{0,h,\rho}~\forall~~  \boldsymbol{\tilde p}_h\in \boldsymbol{\tilde Q}_h,\boldsymbol{\tilde q}_h\in \boldsymbol{\tilde Q}_h.
$$
$$
a_w(\boldsymbol{\tilde p}_h,\boldsymbol{\tilde p}_h)\geq\|\boldsymbol{\tilde p}_h\|^2_{0,h,\rho}~~ \forall  ~~\boldsymbol{\tilde p}_h\in \boldsymbol{\tilde Q}_h.
$$
\end{theorem}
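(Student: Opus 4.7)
The plan is that both assertions follow immediately from substituting $\eta = \rho h_K$ into the definition \eqref{WG_aform} of $a_w$ and comparing with the definition of $\|\cdot\|_{0,h,\rho}$ in \eqref{WG:grad:norm}; there is no real analytical content beyond Cauchy--Schwarz.

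For the coercivity bound, I would simply set $\boldsymbol{\tilde q}_h = \boldsymbol{\tilde p}_h$ in \eqref{WG_aform}. With $\eta = \rho h_K$, the volume term reduces to $(c\boldsymbol p_h, \boldsymbol p_h)_{\mathcal T_h}$ and the boundary term becomes $\sum_{K\in\mathcal T_h} \rho h_K \langle (\boldsymbol p_h - \boldsymbol{\hat p}_h)\cdot \boldsymbol n_K, (\boldsymbol p_h - \boldsymbol{\hat p}_h)\cdot \boldsymbol n_K\rangle_{\partial K}$. The sum of these two is exactly $\|\boldsymbol{\tilde p}_h\|^2_{0,h,\rho}$ by \eqref{WG:grad:norm}, so in fact equality holds.

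For continuity, I would apply the Cauchy--Schwarz inequality twice. First, on each individual piece: for the bulk term $|(c\boldsymbol p_h, \boldsymbol q_h)_{\mathcal T_h}| \le (c\boldsymbol p_h,\boldsymbol p_h)_{\mathcal T_h}^{1/2}(c\boldsymbol q_h,\boldsymbol q_h)_{\mathcal T_h}^{1/2}$ (using that $c$ is symmetric positive definite so $(c\cdot,\cdot)_{\mathcal T_h}$ is an inner product), and, on each $K$,
$$|\langle (\boldsymbol p_h-\boldsymbol{\hat p}_h)\cdot \boldsymbol n_K, (\boldsymbol q_h-\boldsymbol{\hat q}_h)\cdot \boldsymbol n_K\rangle_{\partial K}|\le \|(\boldsymbol p_h-\boldsymbol{\hat p}_h)\cdot \boldsymbol n_K\|_{0,\partial K}\|(\boldsymbol q_h-\boldsymbol{\hat q}_h)\cdot \boldsymbol n_K\|_{0,\partial K},$$
weighted by $\rho h_K$ and summed over $K$ via the discrete Cauchy--Schwarz inequality. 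Then I would combine the two resulting bounds by the elementary inequality $ab + cd \le (a^2+c^2)^{1/2}(b^2+d^2)^{1/2}$, which produces $|a_w(\boldsymbol{\tilde p}_h,\boldsymbol{\tilde q}_h)| \le \|\boldsymbol{\tilde p}_h\|_{0,h,\rho}\|\boldsymbol{\tilde q}_h\|_{0,h,\rho}$ with constant exactly $1$.

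There is no real obstacle; the main thing to watch is simply matching the parameter $\eta = \rho h_K$ term-by-term against the boundary contribution in $\|\cdot\|_{0,h,\rho}$, and noting that the continuity constant is $1$ independent of $\rho$, so the hypothesis $0 < \rho \le 1$ is only used later (through companion estimates for $b_w$) rather than here.
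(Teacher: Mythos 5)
Your proof is correct and coincides with what the paper intends: the paper states this theorem without a proof, declaring the continuity and coercivity ``obvious'' from the definition of the norms, and your term-by-term matching of $\eta=\rho h_K$ against the boundary contribution of $\|\cdot\|_{0,h,\rho}$ (yielding equality for coercivity, and constant exactly $1$ for continuity via Cauchy--Schwarz applied to each term and then to the sum) is precisely the omitted verification. Your side remark that the hypothesis $0<\rho\le 1$ plays no role in this particular estimate is also accurate.
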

Before we prove the boundedness and inf-sup condition of $b_w(\boldsymbol{\tilde p}_h,v_h)$, by identity \eqref{equ:dg-identity_1} and noting that $[\boldsymbol{\hat p}_h]=0$, we rewrite $b_w(\boldsymbol{\tilde p}_h,v_h)$ as:
\begin{equation}\label{bw_identity}
b_w(\boldsymbol{\tilde p}_h,v_h)=
(\boldsymbol{p}_h, \nabla_h v_h)_{\mathcal T_h}-(\boldsymbol{\hat p}_h\cdot\boldsymbol{n}_K,{v}_h)_{\partial \mathcal T_h}=(\boldsymbol{p}_h, \nabla v_h)_{\mathcal T_h}-\langle \boldsymbol{\hat p}_h, \lbrack\!\lbrack v_h\rbrack\!\rbrack\rangle_{\mathcal E_h}.
\end{equation} 
Then, the boundedness of $b_w(\boldsymbol{\tilde p}_h,v_h)$ is as follows:
\begin{theorem}\label{bounded:bw}
For any $0<\rho\leq 1$, and for any $\boldsymbol{\tilde p}_h\in \boldsymbol{\tilde Q}_h, v_h\in V_h$,
\begin{equation}
b_w(\boldsymbol{\tilde p}_h,v_h)\leq C_w \|\boldsymbol{\tilde p}_h\|_{0,h\rho} \|v_h\|_{1,h,\rho}.
\end{equation}
\end{theorem}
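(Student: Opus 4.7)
\textbf{Proof plan for Theorem \ref{bounded:bw}.} The natural starting point is the rewritten form \eqref{bw_identity}:
$$
b_w(\boldsymbol{\tilde p}_h,v_h)=(\boldsymbol{p}_h, \nabla_h v_h)_{\mathcal T_h}-\langle \boldsymbol{\hat p}_h, \lbrack\!\lbrack v_h\rbrack\!\rbrack\rangle_{\mathcal E_h}.
$$
The first term is bounded immediately by Cauchy-Schwarz: $|(\boldsymbol{p}_h,\nabla_h v_h)_{\mathcal T_h}| \le \|\boldsymbol p_h\|\,\|\nabla_h v_h\|$, and both factors are controlled by the respective norms (the $L^2$ norm of $\boldsymbol p_h$ via the equivalence of $(c\boldsymbol p_h, \boldsymbol p_h)$ with $\|\boldsymbol p_h\|^2$, and $\|\nabla_h v_h\|$ via the first component of $\|v_h\|_{1,h,\rho}$).

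The key work is the edge term. Since $\boldsymbol{\hat p}_h|_e=\hat p_h\boldsymbol n_e$ with $\hat p_h\in\hat Q(e)$, the integrand reduces to $\boldsymbol{\hat p}_h\cdot\lbrack\!\lbrack v_h\rbrack\!\rbrack=\hat p_h[v_h]$ on each edge, and the $L^2$ projection property gives
$$
\langle\boldsymbol{\hat p}_h,\lbrack\!\lbrack v_h\rbrack\!\rbrack\rangle_e=\langle \hat p_h,[v_h]\rangle_e=\langle\hat p_h,\hat Q_e([v_h])\rangle_e.
$$
I then insert the weights $(\rho h_e)^{1/2}$ and $(\rho^{-1}h_e^{-1})^{1/2}$ and apply Cauchy-Schwarz across edges to obtain
$$
|\langle\boldsymbol{\hat p}_h,\lbrack\!\lbrack v_h\rbrack\!\rbrack\rangle_{\mathcal E_h}|\le\Big(\rho\sum_{e\in\mathcal E_h}h_e\|\hat p_h\|_{0,e}^2\Big)^{1/2}\Big(\rho^{-1}\sum_{e\in\mathcal E_h}h_e^{-1}\|\hat Q_e([v_h])\|_{0,e}^2\Big)^{1/2},
$$
where the second factor is exactly $\|v_h\|_{1,h,\rho}$ (up to the gradient term).

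The main technical step is bounding the first factor by $\|\boldsymbol{\tilde p}_h\|_{0,h,\rho}$. For this, on each edge $e\subset\partial K$ I use $|\hat p_h|=|\boldsymbol{\hat p}_h\cdot\boldsymbol n_K|$ together with the triangle inequality
$$
\|\hat p_h\|_{0,e}\le\|(\boldsymbol p_h-\boldsymbol{\hat p}_h)\cdot\boldsymbol n_K\|_{0,e}+\|\boldsymbol p_h\cdot\boldsymbol n_K\|_{0,e},
$$
and then the discrete trace inequality $\|\boldsymbol p_h\cdot\boldsymbol n_K\|_{0,e}^2\lesssim h_K^{-1}\|\boldsymbol p_h\|_{0,K}^2$. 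Multiplying by $\rho h_e$, summing over edges, and using the shape-regularity fact $h_e\lesssim h_K$ together with the hypothesis $\rho\le 1$, I obtain
$$
\rho\sum_{e\in\mathcal E_h}h_e\|\hat p_h\|_{0,e}^2\lesssim\rho\sum_{K\in\mathcal T_h}h_K\|(\boldsymbol p_h-\boldsymbol{\hat p}_h)\cdot\boldsymbol n_K\|_{0,\partial K}^2+\rho\|\boldsymbol p_h\|^2\lesssim\|\boldsymbol{\tilde p}_h\|_{0,h,\rho}^2.
$$

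Combining the two contributions yields the desired uniform continuity estimate. The main (minor) obstacle is bookkeeping at the interior edges, where one must be careful that the contributions from $K^+$ and $K^-$ both appear in $\sum_K\langle\cdot,\cdot\rangle_{\partial K}$, so the triangle-inequality argument above really does absorb both half-edge contributions into $\|\boldsymbol{\tilde p}_h\|_{0,h,\rho}^2$; the role of the assumption $\rho\le1$ is precisely to allow the plain $\|\boldsymbol p_h\|^2$ term (coming from the trace inequality) to be absorbed into the weighted $\|\cdot\|_{0,h,\rho}^2$ norm.
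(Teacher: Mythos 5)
Your proposal is correct and follows essentially the same route as the paper's proof: the identity \eqref{bw_identity}, Cauchy--Schwarz on the volume term, insertion of the projection $\hat Q_e$ and the weighted Cauchy--Schwarz on the edge term, and then the triangle inequality plus the discrete trace inequality to split $\|\boldsymbol{\hat p}_h\cdot\boldsymbol n_e\|_{0,e}$ into the stabilization part and $\|\boldsymbol p_h\|_{0,K}$, with $\rho\leq 1$ absorbing the latter. Your remark on the role of $\rho\le 1$ and the interior-edge bookkeeping matches the paper's estimate \eqref{bounded_bw_1}.
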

\begin{proof}
Using the Cauchy inequality for \eqref{bw_identity}, we obtain:
\begin{equation}\label{bounded_bw}
\begin{split}
|b_w(\boldsymbol{\tilde p}_h,v_h)|&\leq\|\boldsymbol{p}_h\|\|\nabla_h v_h\|+\sum\limits_{e\in \mathcal{E}_h}\| \boldsymbol{\hat p}_h\cdot \boldsymbol n_e\|_{0,e}\|\hat Q_e([v_h])\|_{0,e}\\
& \leq\|\boldsymbol{p}_h\|\|\nabla_h v_h\|+\Big(\rho\sum\limits_{e\in \mathcal{E}_h}h_e\| \boldsymbol{\hat p}_h\cdot \boldsymbol n_e\|^2_{0,e}\Big)^{1/2} \Big(\rho^{-1}\sum\limits_{e\in \mathcal{E}_h}h_e^{-1}\|\hat Q_e([v_h])\|^2_{0,e}\Big)^{1/2}\\
& \leq\Big(\|\boldsymbol{p}_h\|+\Big(\rho\sum\limits_{e\in \mathcal{E}_h}h_e\| \boldsymbol{\hat p}_h\cdot \boldsymbol n_e\|^2_{0,e}\Big)^{1/2}\Big) \|v_h\|_{1,h,\rho}.
\end{split}
\end{equation}
Let $K$ be an element that takes e as an edge or flat face. Then, using the trace
inequality and the inverse inequality we obtain:
\begin{equation}\label{bounded_bw_1}
h_e\| \boldsymbol{\hat p}_h\cdot \boldsymbol n_e\|^2_{0,e}\leq 2 h_e \| (\boldsymbol{\hat p}_h-\boldsymbol p_h)\cdot \boldsymbol n_e\|^2_{0,e}+2h_e \|\boldsymbol p_h\cdot \boldsymbol n_e\|_{0,e}^2\leq C_t (h_e \| (\boldsymbol{\hat p}_h-\boldsymbol p_h)\cdot \boldsymbol n_e\|^2_{0,e}+ \|\boldsymbol p_h\|_{0,K}^2).
\end{equation}
Substituting the above inequality \eqref{bounded_bw_1} into \eqref{bounded_bw} yields:
$$
b_w(\boldsymbol{\tilde p}_h,v_h)\leq C_w \|\boldsymbol{\tilde p}_h\|_{0,h,\rho} \|v_h\|_{1,h,\rho}.
$$
Hence, the lemma is proved.
\end{proof}

We also have the following uniform inf-sup condition for $b_w(\boldsymbol{\tilde p}_h,v_h)$:
\begin{theorem}\label{inf-sup:bw}
Assume $\nabla_h V_h\subset \boldsymbol Q_h$, then for any $0<\rho\leq 1$, we have: 
\begin{equation}
 \inf_{v_h\in V_h}\sup_{\boldsymbol{\tilde p}_h\in\boldsymbol{\tilde Q}_h}   \frac{b_w(\boldsymbol{\tilde p}_h,v_h)}{\|{v}_h\|_{1,h,\rho}\|\boldsymbol{\tilde p}_h\|_{0,h,\rho}}    \geq \beta_4,
\end{equation}
where $\beta_4>0$  is independent of mesh size $h$ and $\rho$.
\end{theorem}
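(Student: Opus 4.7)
\textbf{Proof proposal for Theorem \ref{inf-sup:bw}.} The plan is to build an explicit Fortin-type operator: given $v_h\in V_h$, we will construct a single test function $\boldsymbol{\tilde p}_h=(\boldsymbol p_h,\boldsymbol{\hat p}_h)\in\boldsymbol Q_h\times\hat{\boldsymbol Q}_h$ whose pairing with $v_h$ through $b_w$ recovers exactly the squared norm $\|v_h\|_{1,h,\rho}^2$, while $\|\boldsymbol{\tilde p}_h\|_{0,h,\rho}$ remains controlled by $\|v_h\|_{1,h,\rho}$. The natural volume part is $\boldsymbol p_h:=\nabla_h v_h$, which lies in $\boldsymbol Q_h$ by the hypothesis $\nabla_h V_h\subset\boldsymbol Q_h$; the natural facet part is $\boldsymbol{\hat p}_h:=\hat p_h\boldsymbol n_e$ with $\hat p_h|_e:=-\rho^{-1}h_e^{-1}\hat Q_e([v_h])$, chosen so that the boundary contribution mirrors the weighted jump term in the $\|\cdot\|_{1,h,\rho}$ norm.

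First I would rewrite $b_w$ using \eqref{bw_identity} together with \eqref{equ:dg-identity_2} and the single-valuedness of $\boldsymbol{\hat p}_h$, so that
\[
b_w(\boldsymbol{\tilde p}_h,v_h)=(\boldsymbol p_h,\nabla_h v_h)_{\mathcal T_h}-\langle\hat p_h,[v_h]\rangle_{\mathcal E_h}=(\boldsymbol p_h,\nabla_h v_h)_{\mathcal T_h}-\langle\hat p_h,\hat Q_e([v_h])\rangle_{\mathcal E_h},
\]
the last equality following from $\hat p_h\in\hat Q(e)$ and the definition of the $L^2$ projection $\hat Q_e$. Plugging in the two choices above yields
\[
b_w(\boldsymbol{\tilde p}_h,v_h)=\|\nabla_h v_h\|^2+\rho^{-1}\sum_{e\in\mathcal E_h}h_e^{-1}\|\hat Q_e([v_h])\|_{0,e}^2=\|v_h\|_{1,h,\rho}^2.
\]

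The main work is then the uniform bound $\|\boldsymbol{\tilde p}_h\|_{0,h,\rho}\lesssim\|v_h\|_{1,h,\rho}$, and this is the step I expect to be the crux. The volume piece $(c\boldsymbol p_h,\boldsymbol p_h)_{\mathcal T_h}=(c\nabla_h v_h,\nabla_h v_h)_{\mathcal T_h}$ is directly controlled by $\|\nabla_h v_h\|^2$. For the stabilization term, I split
\[
(\boldsymbol p_h-\boldsymbol{\hat p}_h)\cdot\boldsymbol n_K\bigr|_e=(\nabla v_h|_K)\cdot\boldsymbol n_K+\rho^{-1}h_e^{-1}\hat Q_e([v_h])(\boldsymbol n_e\cdot\boldsymbol n_K),
\]
apply the triangle inequality, and estimate the two pieces separately: a trace-inverse estimate gives $h_K\|\nabla v_h\cdot\boldsymbol n_K\|_{0,e}^2\lesssim\|\nabla v_h\|_{0,K}^2$, while the second piece contributes $\rho h_K\cdot\rho^{-2}h_e^{-2}\|\hat Q_e([v_h])\|_{0,e}^2\simeq\rho^{-1}h_e^{-1}\|\hat Q_e([v_h])\|_{0,e}^2$. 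Summing over $K$ and edges, and using the shape-regularity equivalence $h_K\simeq h_e$, yields
\[
\rho\sum_{K\in\mathcal T_h}h_K\|(\boldsymbol p_h-\boldsymbol{\hat p}_h)\cdot\boldsymbol n_K\|_{0,\partial K}^2\lesssim\rho\|\nabla_h v_h\|^2+\rho^{-1}\sum_{e\in\mathcal E_h}h_e^{-1}\|\hat Q_e([v_h])\|_{0,e}^2\lesssim\|v_h\|_{1,h,\rho}^2,
\]
where $0<\rho\le 1$ is used to absorb the $\rho\|\nabla_h v_h\|^2$ term. Combining these estimates gives $\|\boldsymbol{\tilde p}_h\|_{0,h,\rho}\lesssim\|v_h\|_{1,h,\rho}$, and dividing $b_w(\boldsymbol{\tilde p}_h,v_h)=\|v_h\|_{1,h,\rho}^2$ by $\|\boldsymbol{\tilde p}_h\|_{0,h,\rho}\|v_h\|_{1,h,\rho}$ delivers a uniform constant $\beta_4>0$ independent of both $h$ and $\rho$.
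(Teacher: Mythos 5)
Your proposal is correct and follows essentially the same route as the paper: the identical choice of test function $\boldsymbol p_h=\nabla_h v_h$, $\boldsymbol{\hat p}_h=-\rho^{-1}h_e^{-1}\hat Q_e([v_h])\boldsymbol n_e$, the same exact identity $b_w(\boldsymbol{\tilde p}_h,v_h)=\|v_h\|_{1,h,\rho}^2$, and the same norm bound using the trace/inverse inequality and $0<\rho\le 1$. Your write-up is slightly more explicit about the splitting of the stabilization term, but there is no substantive difference from the paper's argument.
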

\begin{proof}
Since $\nabla_h V_h\subset \boldsymbol Q_h$, taking $\boldsymbol{p}_h=\nabla_h v_h, \boldsymbol{\hat p}_h=-\rho^{-1} h_e^{-1}\hat Q_e([v_h])\boldsymbol{n}_e$ in \eqref{bw_identity}, we have 
$$
b_w(\boldsymbol{\tilde p}_h,v_h)=(\nabla_h v_h, \nabla_h v_h)_{\mathcal T_h}+\rho^{-1} \sum\limits_{e\in \mathcal{E}_h}h_e^{-1}\langle \hat Q_e([v_h]),\hat Q_e([v_h])\rangle_{e}
=\|v_h\|_{1,h,\rho}^2.
$$
Noting that $\rho\leq 1$, we obtain:
\begin{equation}
\begin{split}
\|\boldsymbol{\tilde p}_h\|_{0,h,\rho}^2&=(c \nabla_h v_h, \nabla_h v_h)_{\mathcal T_h}+\rho\sum\limits_{K\in \mathcal{T}_h}h_K\|\nabla_h v_h\cdot \boldsymbol{n_K}+\rho^{-1} h_e^{-1}\hat Q_e([v_h]) \boldsymbol n_e\cdot \boldsymbol n_K\|_{0,\partial K}^2 \\
&\leq \beta_4 \big((c \nabla_h v_h, \nabla_h v_h)_{\mathcal T_h}+\rho^{-1}\sum\limits_{e\in \mathcal{E}_h}h_e^{-1}\|\hat Q_e([v_h])\|_{0,e}^2 \big)\\
&\leq \beta_4 \|v_h\|_{1,h,\rho}^2.
\end{split}
\end{equation}
Here, we obtain the desired result.
\end{proof}

\subsection{Proof for Part \ref{wellposed:Aw:div} of Theorem \ref{wellposed:Aw:graddiv}}
Next, we prove part \ref{wellposed:Aw:div} of Theorem \ref{wellposed:Aw:div}.
The uniform boundedness of $A_w((\cdot, \cdot),(\cdot, \cdot))$ is obvious. The uniform inf-sup of 
$A_w((\cdot, \cdot),(\cdot, \cdot))$ is as follows:
%\begin{proof}
\begin{theorem}
Let $\boldsymbol R_h\subset H({\rm div}, \Omega)\cap
\boldsymbol Q_h$ be the Raviart-Thomas finite element space. Assume
that $\{\!\!\{\boldsymbol R_h\}\!\!\}\subset \hat Q_h$ and
$V_h = {\rm div}_h \boldsymbol Q_h$. Then, for $0<\rho\le 1$ the bilinear 
form $A_w((\cdot, \cdot),(\cdot, \cdot))$ with $\eta=\rho^{-1} h^{-1}_K$ satisfies:
\begin{equation} \label{equ:WG-infsup-div}
 \inf_{(\tilde{\boldsymbol p}_h, u_h)\in \tilde{\boldsymbol Q}_h\times V_h} 
 \sup_{(\tilde{\boldsymbol q}_h, v_h)\in \tilde{\boldsymbol
Q}_h\times V_h}   \frac{A_w((\tilde{\boldsymbol p}_h,
u_h),(\tilde{\boldsymbol q}_h, v_h))}{(\|u_h\|+\|\tilde{\boldsymbol
p}_h\|_{\widetilde{\rm div},\rho,h})(\|v_h\|+\|\tilde{\boldsymbol
  q}_h\|_{\widetilde{\rm div},\rho,h})} \geq \beta_5,
\end{equation}
where $\beta_5>0$ is a constant independent of both $\rho$ and mesh size $h$.
\end{theorem}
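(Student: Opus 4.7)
The plan is to apply the standard Brezzi saddle-point theory to $A_w$ by verifying three ingredients uniformly in $\rho \in (0,1]$: (i) continuity of $a_w$ and $b_w$, (ii) coercivity of $a_w$ on $\mathrm{Ker}(B) = \{\tilde{\boldsymbol q}_h \in \tilde{\boldsymbol Q}_h : b_w(\tilde{\boldsymbol q}_h, v_h)=0, \forall v_h \in V_h\}$, and (iii) an inf-sup condition for $b_w$. First, I will rewrite $b_w$ in a form suitable for analysis by integrating by parts elementwise:
\begin{equation*}
b_w(\tilde{\boldsymbol p}_h, v_h) = -({\rm div}_h \boldsymbol p_h, v_h)_{\mathcal T_h} + \langle (\boldsymbol p_h - \boldsymbol{\hat p}_h)\cdot \boldsymbol n_K, v_h\rangle_{\partial \mathcal T_h}.
\end{equation*}
Continuity of $a_w$ follows directly from Cauchy--Schwarz against the two pieces of $\|\cdot\|_{\widetilde{\rm div},\rho,h}$; continuity of $b_w$ follows by bounding the first term by $\|{\rm div}_h \boldsymbol p_h\|\,\|v_h\|$ and the second by Cauchy--Schwarz combined with the trace-plus-inverse inequality $\sum_K h_K \|v_h\|_{0,\partial K}^2 \lesssim \|v_h\|^2$ and the bound $\rho^{1/2} \leq 1$.

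For coercivity on the kernel, I will exploit the assumption $V_h = {\rm div}_h \boldsymbol Q_h$, which allows me to choose $v_h = {\rm div}_h \boldsymbol q_h \in V_h$ as test function in the kernel condition. This yields
\begin{equation*}
\|{\rm div}_h \boldsymbol q_h\|^2 = \langle (\boldsymbol q_h - \boldsymbol{\hat q}_h)\cdot \boldsymbol n_K, {\rm div}_h \boldsymbol q_h\rangle_{\partial \mathcal T_h},
\end{equation*}
and by Cauchy--Schwarz together with the trace-plus-inverse inequality $h_K\|{\rm div}_h \boldsymbol q_h\|^2_{0,\partial K} \lesssim \|{\rm div}_h \boldsymbol q_h\|^2_{0,K}$ I will deduce $\|{\rm div}_h \boldsymbol q_h\|^2 \lesssim \rho\, a_w(\tilde{\boldsymbol q}_h, \tilde{\boldsymbol q}_h) \leq a_w(\tilde{\boldsymbol q}_h, \tilde{\boldsymbol q}_h)$ for $\rho \leq 1$. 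Adding this to the obvious bound $a_w(\tilde{\boldsymbol q}_h, \tilde{\boldsymbol q}_h) \geq (c\boldsymbol q_h, \boldsymbol q_h)_{\mathcal T_h} + \rho^{-1}\sum_K h_K^{-1}\|(\boldsymbol q_h - \boldsymbol{\hat q}_h)\cdot \boldsymbol n_K\|_{0,\partial K}^2$ yields uniform coercivity on $\mathrm{Ker}(B)$.

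The inf-sup of $b_w$ is where the full assumption on $\boldsymbol R_h$ will be used, and I expect this to be the main technical step. Given $u_h \in V_h = {\rm div}_h \boldsymbol Q_h$, I will use the surjectivity of ${\rm div}: \boldsymbol R_h \to V_h$ (which holds for Raviart--Thomas) together with an $H({\rm div})$-stable right inverse—constructed, e.g., by solving a Poisson problem on $\Omega$ and applying the RT interpolant—to produce $\boldsymbol r_h \in \boldsymbol R_h \subset H({\rm div}, \Omega) \cap \boldsymbol Q_h$ satisfying ${\rm div}\,\boldsymbol r_h = -u_h$ and $\|\boldsymbol r_h\| + \|{\rm div}\,\boldsymbol r_h\| \lesssim \|u_h\|$. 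I then set $\boldsymbol{\hat q}_h|_e = \{\!\!\{\boldsymbol r_h\}\!\!\} = (\boldsymbol r_h \cdot \boldsymbol n_e)\boldsymbol n_e$, which lies in $\boldsymbol{\hat Q}_h$ by the assumption $\{\!\!\{\boldsymbol R_h\}\!\!\} \subset \hat Q_h$. The $H({\rm div})$-conformity of $\boldsymbol r_h$ ensures that $(\boldsymbol r_h - \boldsymbol{\hat q}_h)\cdot \boldsymbol n_K = 0$ on every face, which both kills the stabilization term so that $\|\tilde{\boldsymbol q}_h\|^2_{\widetilde{\rm div},\rho,h} \lesssim \|u_h\|^2$ independently of $\rho$, and reduces $b_w(\tilde{\boldsymbol q}_h, u_h)$ to $-({\rm div}\,\boldsymbol r_h, u_h)_{\mathcal T_h} = \|u_h\|^2$. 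This produces the required uniform inf-sup constant for $b_w$.

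With all three ingredients verified uniformly in $\rho$ and $h$, the classical Brezzi saddle-point machinery delivers the combined uniform inf-sup bound \eqref{equ:WG-infsup-div} for $A_w$. The main obstacle throughout is making sure that every constant is traced carefully to confirm it does not absorb a negative power of $\rho$; the potentially dangerous place is the kernel coercivity, where the identity coming from $v_h = {\rm div}_h \boldsymbol q_h$ couples the $\|{\rm div}\|$ piece to the jump piece with a factor of $\rho$, and the restriction $\rho \leq 1$ is exactly what makes the estimate survive.
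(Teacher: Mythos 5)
Your proposal is correct, but it follows a genuinely different route from the paper. The paper proves \eqref{equ:WG-infsup-div} directly: for a given $(\tilde{\boldsymbol p}_h,u_h)$ it builds a single composite test function $\boldsymbol q_h=\boldsymbol r_h+\alpha\boldsymbol p_h$, $\hat{\boldsymbol q}_h=\alpha\hat{\boldsymbol p}_h+(\boldsymbol r_h\cdot\boldsymbol n_e)\boldsymbol n_e$, $v_h=-{\rm div}_h\boldsymbol p_h-\alpha u_h$, and then verifies the lower bound on $A_w$ by a Young-inequality bookkeeping argument with parameters $\epsilon_1,\epsilon_2,\alpha$ chosen at the end. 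You instead verify the three Brezzi conditions separately: uniform continuity, coercivity of $a_w$ on ${\rm Ker}(B)$, and the inf-sup for $b_w$. The two proofs share the same essential ingredients — the bounded right inverse $\boldsymbol r_h\in\boldsymbol R_h$ of the divergence with $\|\boldsymbol r_h\|_{H({\rm div})}\lesssim\|u_h\|$, and the choice $\hat{\boldsymbol q}_h|_e=(\boldsymbol r_h\cdot\boldsymbol n_e)\boldsymbol n_e$ (legitimate by $\{\!\!\{\boldsymbol R_h\}\!\!\}\subset\hat Q_h$) that annihilates the stabilization term — but your decomposition isolates where each hypothesis is used: $V_h={\rm div}_h\boldsymbol Q_h$ enters only in the kernel coercivity via the test function $v_h={\rm div}_h\boldsymbol q_h$, and the Raviart--Thomas assumption enters only in the inf-sup for $b_w$. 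Your kernel-coercivity computation is sound: the kernel identity gives $\|{\rm div}_h\boldsymbol q_h\|^2\lesssim\sum_K h_K^{-1}\|(\boldsymbol q_h-\hat{\boldsymbol q}_h)\cdot\boldsymbol n_K\|^2_{0,\partial K}\le\rho\,a_w(\tilde{\boldsymbol q}_h,\tilde{\boldsymbol q}_h)$, and $\rho\le1$ closes the estimate, exactly as you flag. What your approach buys is transparency and the avoidance of the parameter juggling in the paper's direct construction (and it matches the Brezzi-style strategy the paper itself uses for Part 1 of Theorem \ref{wellposed:Ah:divgrad}); what the paper's approach buys is that the test function is exhibited explicitly, so no appeal to the abstract equivalence between the Brezzi conditions and the combined inf-sup is needed. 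One minor caveat: constructing $\boldsymbol r_h$ by ``Poisson problem plus RT interpolant'' requires slightly more regularity than $H({\rm div})$ for the canonical interpolant to be defined; it is cleaner to simply invoke, as the paper does, the discrete inf-sup stability of the RT/$V_h$ pair for the mixed conforming method.
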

\begin{proof}
For any given $(\boldsymbol{\tilde p}_h, u_h)\in \boldsymbol{ \tilde Q}_h\times  V_h$, 
namely $(\boldsymbol{p}_h, \boldsymbol {\hat p}_h, u_h)\in \boldsymbol{Q}_h\times\hat Q_h \times V_h$.
Since $V_h\subset {\rm div} \boldsymbol R_h$ and $\boldsymbol R_h\times V_h$ such that the 
mixed conforming method is well-defined, there exists $\boldsymbol r_h\in \boldsymbol R_h$ such that:
\begin{equation}\label{RT0}
-{\rm div} \boldsymbol r_h=u_h~~ \hbox{and}~~ \|\boldsymbol r_h\|+\|{\rm div} \boldsymbol r_h\|\leq C \|u_h\|.
\end{equation}
Now we choose $\boldsymbol q_h=\boldsymbol r_{h}+\alpha\boldsymbol p_{h}, 
\boldsymbol {\hat q}_h= \alpha\boldsymbol {\hat p}_h+ 
(\boldsymbol  {r}_{h}\cdot \boldsymbol n_e)\boldsymbol n_e, v_h=-{\rm div}_h \boldsymbol p_h-\alpha u_h$, 
where $\alpha$ is a constant that will be indicated later. 

We then first verify the boundedness of $(\boldsymbol {\tilde q}_h,v_h)$ by $(\boldsymbol {\tilde p}_h,u_h)$. 

Noting that $(\boldsymbol q_h- \boldsymbol {\hat q}_h)\cdot \boldsymbol n_K|_{\partial K}=(\boldsymbol r_{h}+\alpha\boldsymbol p_{h}-\alpha\boldsymbol {\hat p}_h- (\boldsymbol  {r}_{h}\cdot \boldsymbol n_e)\boldsymbol n_e)\cdot \boldsymbol n_K|_{\partial K}=\alpha (\boldsymbol p_h- \boldsymbol {\hat p}_h)\cdot \boldsymbol n_K|_{\partial K}$, we have: 
\begin{equation}
\begin{split}
\|\boldsymbol {\tilde q}_h\|_{\widetilde{\rm div},\rho,K}^2&=(c \boldsymbol{q}_h, \boldsymbol{q}_h)_K+({\rm div}\boldsymbol{q}_h, {\rm div}\boldsymbol{q}_h)_K+\rho^{-1} h^{-1}_K\langle(\boldsymbol{q}_h-\boldsymbol{\hat q}_h)\cdot \boldsymbol{n}_K, (\boldsymbol{q}_h-\boldsymbol{\hat q}_h)\cdot \boldsymbol{n}_K\rangle_{\partial K}\\
&=(c(\boldsymbol r_{h}+\alpha\boldsymbol p_{h}), \boldsymbol r_{h}+\alpha \boldsymbol p_{h})_K+({\rm div}\boldsymbol r_{h}+\alpha {\rm div}\boldsymbol p_{h}, {\rm div}\boldsymbol r_{h}+\alpha {\rm div}\boldsymbol p_{h})_K\\
&+\alpha^2\rho^{-1}h^{-1}_K\langle(\boldsymbol{p}_h-\boldsymbol{\hat p}_h)\cdot \boldsymbol{n}_K, (\boldsymbol{p}_h-\boldsymbol{\hat p}_h)\cdot \boldsymbol{n}_K\rangle_{\partial K}\\
&\leq 2\|\boldsymbol r_{h}\|^2+2\alpha^2\|\boldsymbol p_{h}\|^2+2\|{\rm div} \boldsymbol r_{h}\|^2+({\rm div}_h \boldsymbol p_{h},{\rm div}_h \boldsymbol p_{h})\\
&+\alpha^2\rho^{-1} h^{-1}_K\langle(\boldsymbol{p}_h-\boldsymbol{\hat p}_h)\cdot \boldsymbol{n}_K, (\boldsymbol{p}_h-\boldsymbol{\hat p}_h)\cdot \boldsymbol{n}_K\rangle_{\partial K}\\
&\leq 2(C^2\|u_h\|_{0,K}^2+\alpha^2\|\boldsymbol {\tilde p}_h\|^2_{\widetilde{\rm div},\rho,K}).
\end{split}
\end{equation}
Hence, $\|\boldsymbol {\tilde q}_h\|_{\widetilde{\rm div},h,\rho}^2\leq 2(C^2\|u_h\|^2
+\alpha^2\|\boldsymbol {\tilde p}_h\|^2_{\widetilde{\rm div},h,\rho})$. 
Further,
\begin{equation}
\begin{split}
\|v_h\|=\|-{\rm div}_h \boldsymbol p_h-\alpha u_h\|=\|{\rm div}_h \boldsymbol p_h \|+\alpha\|u_h\|\leq \|\boldsymbol {\tilde p}_h\|_{\widetilde{\rm div},h,\rho}+\alpha\|u_h\|.
\end{split}
\end{equation}
Then, we prove the boundedness of $(\boldsymbol {\tilde q}_h,v_h)$ by $(\boldsymbol {\tilde p}_h,u_h)$.

Now through integration by parts, we have the following:
\begin{eqnarray}
&&A_w((\boldsymbol{ \tilde p}_h, u_h),(\boldsymbol{ \tilde q}_h, v_h))\\
&=&(c \boldsymbol{p}_h, \boldsymbol{q}_h)_{\mathcal T_h}+\rho^{-1}\sum\limits_{K\in \mathcal{T}_h}h^{-1}_K\langle(\boldsymbol{p}_h-\boldsymbol{\hat p}_h)\cdot \boldsymbol{n}_K, (\boldsymbol{q}_h-\boldsymbol{\hat q}_h)\cdot \boldsymbol{n}_K\rangle_{\partial K}\\
&&\nonumber+(\boldsymbol{q}_h, \nabla_h u_h)_{\mathcal T_h}-\langle\boldsymbol{\hat q}_h\cdot\boldsymbol{n},{u}_h\rangle_{\partial {\mathcal T_h}}+(\boldsymbol{p}_h, \nabla_h v_h)_{\mathcal T_h}-\langle\boldsymbol{\hat p}_h\cdot\boldsymbol{n},{v}_h\rangle_{\partial {\mathcal T_h}}\\
&=&\nonumber(c \boldsymbol{p}_h, \boldsymbol{q}_h)_{\mathcal T_h}+\rho^{-1}\sum\limits_{K\in \mathcal{T}_h}h^{-1}_K\langle(\boldsymbol{p}_h-\boldsymbol{\hat p}_h)\cdot \boldsymbol{n}_K, (\boldsymbol{q}_h-\boldsymbol{\hat q}_h)\cdot \boldsymbol{n}_K\rangle_{\partial K}-({\rm div}_h \boldsymbol{q}_h, u_h)_{\mathcal T_h}\\
&&\nonumber+\langle(\boldsymbol{q}_h-\boldsymbol{\hat q}_h)\cdot\boldsymbol{n},{u}_h\rangle_{\partial {\mathcal T_h}}-({\rm div}_h \boldsymbol{p}_h, v_h)_{\mathcal T_h}+\langle(\boldsymbol{p}_h-\boldsymbol{\hat p}_h)\cdot\boldsymbol{n},{v}_h\rangle_{\partial {\mathcal T_h}}
\end{eqnarray}
By the Cauchy inequality and inverse inequality, we have:
\begin{eqnarray}
\lefteqn{A_{w,K}((\boldsymbol{ \tilde p}_h, u_h),(\boldsymbol{ \tilde q}_h, v_h))}\\
&=&
(c \boldsymbol{p}_h,
\boldsymbol{r}_h+\alpha\boldsymbol{p}_h)_K+\alpha\rho^{-1} h^{-1}_K\langle(\boldsymbol{p}_h-\boldsymbol{\hat p}_h)\cdot \boldsymbol{n}_K, (\boldsymbol{p}_h-\boldsymbol{\hat p}_h)\cdot \boldsymbol{n}_K\rangle_{\partial K}\\
&&-
({\rm div} \boldsymbol{r}_h+\alpha {\rm div} \boldsymbol{p}_h, u_h)_K+\alpha\langle(\boldsymbol{p}_h-\boldsymbol{\hat p}_h)\cdot\boldsymbol{n}_K,{u}_h\rangle_{\partial K}\\
&&-({\rm div} \boldsymbol{p}_h, -{\rm div}\boldsymbol{p}_h -\alpha u_h)_K+  \langle(\boldsymbol{p}_h-\boldsymbol{\hat p}_h)\cdot\boldsymbol{n}_K,-{\rm div}\boldsymbol{p}_h -\alpha u_h\rangle_{\partial K}\\
&=&(c \boldsymbol{p}_h, \boldsymbol{r}_h)_K+\alpha  (c \boldsymbol{p}_h,\boldsymbol{p}_h)_K+\alpha\rho^{-1} h^{-1}_K\langle(\boldsymbol{p}_h-\boldsymbol{\hat p}_h)\cdot \boldsymbol{n}_K, (\boldsymbol{p}_h-\boldsymbol{\hat p}_h)\cdot \boldsymbol{n}_K\rangle_{\partial K}\\
&&+(u_h, u_h)_K
+  ({\rm div} \boldsymbol{p}_h, {\rm div}\boldsymbol{p}_h)_K-  \langle(\boldsymbol{p}_h-\boldsymbol{\hat p}_h)\cdot\boldsymbol{n}_K,{\rm div}\boldsymbol{p}_h \rangle_{\partial K}\\
&\geq& -\epsilon_1 \|\boldsymbol{r}_h\|_{0,K}^2-\epsilon_1^{-1} \|\boldsymbol{p}_h\|_{0,K}^2+ \alpha \|\boldsymbol{p}_h\|_{0,K}^2+\alpha\rho^{-1}  h^{-1}_K\langle(\boldsymbol{p}_h-\boldsymbol{\hat p}_h)\cdot \boldsymbol{n}_K, (\boldsymbol{p}_h-\boldsymbol{\hat p}_h)\cdot \boldsymbol{n}_K\rangle_{\partial K}\\
&&+\|u_h\|_{0,K}^2
+  ({\rm div} \boldsymbol{p}_h, {\rm div}\boldsymbol{p}_h)_K-\epsilon_2^{-1}  h^{-1}_K\langle(\boldsymbol{p}_h-\boldsymbol{\hat p}_h)\cdot \boldsymbol{n}_K, (\boldsymbol{p}_h-\boldsymbol{\hat p}_h)\cdot \boldsymbol{n}_K\rangle_{\partial K}\\
&&-\epsilon_2  h_K({\rm div} \boldsymbol{p}_h, {\rm div}\boldsymbol{p}_h)_{\partial K}\\
&=& -\epsilon_1 \|\boldsymbol{r}_h\|_{0,K}^2+(\alpha-\epsilon_1^{-1}) \|\boldsymbol{p}_h\|_{0,K}^2+\|u_h\|_{0,K}^2+(1-\epsilon_2  C_6)({\rm div} \boldsymbol{p}_h, {\rm div}\boldsymbol{p}_h)_{K}\\
&&+(\alpha \rho^{-1}-\epsilon_2^{-1}) h^{-1}_K\langle(\boldsymbol{p}_h-\boldsymbol{\hat p}_h)\cdot \boldsymbol{n}_K, (\boldsymbol{p}_h-\boldsymbol{\hat p}_h)\cdot \boldsymbol{n}_K\rangle_{\partial K}
\end{eqnarray}
Noting \eqref{RT0}, we have the following:
\begin{eqnarray}
A_w((\boldsymbol{ \tilde p}_h, u_h),(\boldsymbol{ \tilde q}_h, v_h))
&\geq&(1-\epsilon_1 C) \|u_h\|^2+(\alpha- \epsilon_1^{-1}) \|\boldsymbol{p}_h\|^2+(1-\epsilon_2C_6)  ({\rm div}_h \boldsymbol{p}_h, {\rm div}_h\boldsymbol{p}_h)\\
&&+(\alpha\rho^{-1}-\epsilon_2^{-1})\sum\limits_{K\in \mathcal{T}_h}  h^{-1}_K\langle(\boldsymbol{p}_h-\boldsymbol{\hat p}_h)\cdot \boldsymbol{n}_K, (\boldsymbol{p}_h-\boldsymbol{\hat p}_h)\cdot \boldsymbol{n}_K\rangle_{\partial K} .
\end{eqnarray}

Now choosing $\epsilon_1=\frac{1}{2C}, \epsilon_2=\frac{1}{2C_6}, \alpha=\max\{2C+\frac{1}{2},2C_6+\frac{1}{2}\}, 
0<\rho\leq 1$, we have:
\begin{eqnarray}
A_w((\boldsymbol{ \tilde p}_h, u_h),(\boldsymbol{ \tilde q}_h, v_h))&\geq& \frac{1}{2}\Big(\|\boldsymbol{p}_h\|^2+\sum\limits_{K\in \mathcal{T}_h}h^{-1}_K\|(\boldsymbol{p}_h-\boldsymbol{\hat p}_h)\cdot \boldsymbol{n}_K\|^2_{0,\partial K}+ \|u_h\|^2+\|{\rm div}_h \boldsymbol{p}_h\|^2\Big)\\
& \geq& \frac{1}{2}\Big(\|u_h\|^2+\|\boldsymbol{\tilde p}_h\|_{\widetilde{\rm div},h,\rho}\Big).
\end{eqnarray}
Thus, we prove the theorem.
\end{proof}

%% Concluding remarks 
\section{Summary} \label{sec:concluding}
In this paper we use the classic LBB  theory to prove two types of uniform stability results under some proper
parameter-dependent norms for HDG methods, which are uniformly stable with respect to the 
stabilization parameters and mesh size $h$. Based on the uniform stability results, 
we further prove uniform and optimal error estimates for HDG methods, which are 
independent of the stabilization parameters. In addition, we also prove two types of uniform stability results for 
WG methods. Similarly based on the uniform stability results, we further prove uniform and optimal error estimates for WG methods. These uniform stability results and optimal error estimates for WG methods are meaningful. Following these uniform stability results for HDG 
methods and WG methods presented in this paper, an HDG method is shown to converge to a 
primal conforming method, whereas a WG method is shown to converge to a mixed conforming method by taking the 
limit of the stabilization parameters.

%, we prove two uniform stability results for both the HDG method  and WG 
%method.  Under our 
%properly proposed norms and the choice of commonly used spaces, 
%we establish uniform stability  
%for both HDG and WG methods, which are uniformly stable with respect to the 
%stabilization parameters and mesh size $h$.
%Based on these uniformly stable results, we also prove the uniformly optimal error 
%estimates, which are 
%independent of the stabilization parameters in the paper.

%% end of file %% 

\bibliographystyle{unsrt}
\bibliography{mixedDG}

\end{document}